\def\pt{\partial}\def\al{\alpha}\def\dfr#1#2{\displaystyle{\frac{#1}{#2}}}
\def\la{\lambda}
\renewcommand{\vec}[1]{\mbox{\boldmath \small $#1$}}
 \def\Ze{\mathbb{Z}}
 \def\bga{\begin{array}} \def\eda{\end{array}}
  \def\pt{\partial}
 \def\la{\lambda} 
  \def\al{\alpha}
\def\dfr#1#2{\displaystyle{\frac{#1}{#2}}} 
\def\Ronu#1{\uppercase\expandafter{\romannumeral#1}}
\renewcommand{\qed}{\hfill \nobreak \ifvmode \relax \else
      \ifdim\lastskip<1.5em \hskip-\lastskip
      \hskip1.5em plus0em minus0.5em \fi \nobreak
      \vrule height0.75em width0.5em depth0.25em\fi}
\newtheorem{example}{Example}[section]
\newtheorem{remark}{Remark}[section]
\numberwithin{equation}{section}
\numberwithin{figure}{section}
\numberwithin{table}{section}
\numberwithin{thm}{section}
\newenvironment{proof}[1][Proof]{\begin{trivlist}
\item[\hskip \labelsep {\bfseries #1}]}{\end{trivlist}}
\renewcommand{\qed}{\hfill \nobreak \ifvmode \relax \else
      \ifdim\lastskip<1.5em \hskip-\lastskip
      \hskip1.5em plus0em minus0.5em \fi \nobreak
      \vrule height0.75em width0.5em depth0.25em\fi}
\begin{document}
\begin{frontmatter}
\title{A direct Eulerian GRP scheme for spherically symmetric general relativistic hydrodynamics}

\author{Kailiang Wu},
\ead{wukl@pku.edu.cn}
\author[label2]{Huazhong Tang}
\ead{hztang@math.pku.edu.cn}
\thanks[label2]{Corresponding author. Tel:~+86-10-62757018;
Fax:~+86-10-62751801.}
\address{HEDPS, CAPT \& LMAM, School of Mathematical Sciences, Peking University, Beijing 100871, P.R. China}

\date{\today{}}
\maketitle

\begin{abstract}
The paper proposes a second-order accurate direct Eulerian generalized Riemann problem (GRP) scheme for the spherically
symmetric general relativistic hydrodynamical (RHD) equations and
a second-order accurate discretization for the  spherically
symmetric Einstein (SSE) equations.
The former is directly using the Riemann invariants and the Runkine-Hugoniot jump conditions to analytically resolve the left and right nonlinear waves of the local GRP in the Eulerian formulation together with the local change of the metrics to obtain the limiting values of  the time derivatives of the conservative variables along the cell interface and the numerical flux for the GRP scheme. While the latter utilizes the energy-momentum tensor obtained in the GRP solver to evaluate the fluid variables in the  SSE equations and keeps the continuity of the metrics at the cell interfaces.  Several numerical experiments show that the GRP scheme can achieve second-order accuracy and high resolution, and is effective for spherically
symmetric general RHD  problems.
\end{abstract}

\begin{keyword}
Spherically symmetric general relativistic hydrodynamics;
Godunov-type scheme;
generalized Riemann problem;
Riemann problem;
Riemann invariant;
Rankine-Hugoniot jump condition.
\end{keyword}
\end{frontmatter}


\section{Introduction}
\label{sec:intro}

Many fields such as high-energy astrophysics etc. may involve flows at speeds close to the speed of light or influenced by large gravitational potentials such that the relativistic effect should be taken into account.
Relativistic flows appear in numerous astrophysical phenomena, from stellar to galactic
scales, e.g. super-luminal jets, gamma-ray bursts, core collapse super-novae, coalescing
neutron stars, formation of black holes and so on.

The governing equations of the relativistic hydrodynamics (RHD) are highly nonlinear so that their analytical treatment is extremely difficult.
A primary and powerful approach to understand the physical mechanisms in RHDs is  numerical simulations. The pioneering numerical work may date back to  
the finite difference code by May and White with the artificial viscosity technique for spherically symmetric general RHD equations in the Lagrangian coordinate \cite{May1966,May1967}. Wilson first attempted to numerically solve multi-dimensional RHD equations in the Eulerian coordinate  by using the finite difference method with the artificial viscosity technique \cite{Wilson:1972}, which was systematically introduced in \cite{Wilson2003}.
Since 1990s, the numerical study of the RHDs began to attract considerable attention, and various modern shock-capturing methods based on exact
or approximate Riemann solvers have been developed for the RHD equations, the
readers are referred to the early review articles \cite{MME:2003,Font2008} and more recent works on numerical methods for the RHD equations in \cite{WuTang2014,WuTang2015}.
Recently, second-order accurate direct Eulerian generalized Riemann problem
(GRP) schemes were developed for both 1D and 2D special RHD equations \cite{Yang-He-Tang_GRP-RHD1D,Yang-Tang_GRP-RHD2D} and the third-order accurate  extension to the 1D case was also presented in \cite{WuYangTang2014}.

The GRP scheme, as an analytic high-order accurate extension of the Godunov method, was originally
devised for non-relativistic compressible fluid dynamics \cite{Ben-Falcovitz:1984}, by utilizing a piecewise linear
function to approximate the ``initial'' data and then analytically resolving a local GRP at each
interface to yield numerical flux, see the comprehensive description in \cite{Ben-Falcovitz:book}.
There exist two versions of
the original GRP scheme: the Lagrangian and Eulerian. The Eulerian version
is always derived by using the Lagrangian framework with a transformation, which is quite
delicate, particularly for the sonic case and multi-dimensional application.
To avoid those difficulties,
second-order accurate direct
Eulerian GRP schemes were respectively developed for the shallow water equations \cite{Li-GXChen},
the Euler equations \cite{Ben-Li-Warnecke}, and a more general weakly coupled system \cite{Ben-Li:RI:GRP} by directly resolving the local GRPs in the Eulerian formulation via the Riemann invariants and Rankine-Hugoniot jump conditions.
A recent comparison of the GRP scheme with the
gas-kinetic scheme  showed the good performance of the GRP solver
for some inviscid flow simulations \cite{LiLiXu2011}.
Combined with the moving mesh method \cite{TTang:2003}, the adaptive
direct Eulerian GRP scheme was  developed in \cite{Han-Li-TangJCP} with improved resolution as well as
accuracy. The accuracy and performance of the adaptive GRP scheme were further studied in \cite{Han-Li-TangCiCP} in simulating 2D
complex wave configurations formulated with the 2D Riemann problems of non-relativistic Euler equations.
Recently, the adaptive GRP scheme was also extended to unstructured triangular meshes \cite{LiZhang2013}.
The third-order accurate extensions of the direct Eulerian GRP scheme were studied for 1D and 2D non-relativistic Euler equations  in \cite{WuYangTang2013} and the general 1D hyperbolic balance laws in \cite{QianLiWang2013}.

The aim of the paper is to develop a second-order accurate direct Eulerian GRP scheme for spherically symmetric general RHD equations.
The traditional Godunov-type schemes based on exact or approximate Riemann solvers can be extended to the general RHD equations from the special RHD case
through a local change of coordinates in terms of that the spacetime metric is locally Minkowskian \cite{Pons1998}.
Similar idea can be found in developing the so-called locally
inertial Godunov method for spherically symmetric general RHD equations \cite{VoglerTemple2012}.
However, such approach cannot be used to
 develop the direct Eulerian GRP scheme for the general RHD equations,
because 
it is necessary to resolve the local GRP
together with the local change of the metrics taken into account.
Moreover, the metrics should be approximately obtained at the cell interface by   accurate scheme for the SSE equations to  keep the continuity of the approximate metric functions.
In short,  developing the GRP scheme for the general RHD equations
 is not trivial and  much more technical than the special relativistic case.
%

The paper is organized as follows. Section \ref{sec:GoEq} introduces the
governing equations of general RHDs in spherically symmetric spacetime and corresponding Riemann invariants as well as their total differentials.
The second-order accurate direct Eulerian GRP scheme is developed in Section \ref{sec:scheme}.
The outline of the scheme is first given in Section \ref{sec:outline}. Then the local GRPs are analytically resolved in Section \ref{sec:resoluGRP},
where Sections \ref{sec:resoluRare} and \ref{sec:resoluShock} resolve the rarefaction and shock waves by using the Riemann invariants and the Rankine-Hugoniot jump conditions, respectively,  Section \ref{sec:PuPt} concludes the the limiting values of the time
derivatives of the conservative variables at the ``initial'' discontinuous point along the cell interface for both nonsonic and sonic cases,
and Section \ref{sec:acoustic}   discusses the acoustic case. Several numerical experiments are conducted in Section \ref{sec:experiments} to demonstrate the performance and accuracy of the proposed GRP scheme. {Section \ref{sec:conclude}} concludes the paper with several remarks.

%
%
%

\section{Governing equations}
\label{sec:GoEq}

The general RHD equations \cite{Font2008} consist of the local conservation laws of the current density $J^{\mu}$ 
and the stress-energy tensor $T^{\mu \nu}$  
\begin{align}
&
\nabla _\mu  J^{\mu}  = 0, \label{eq:Jmu} \\
&
\nabla _\mu  T^{\mu \nu }  = 0, \label{eq:Tmunu}
\end{align}
where the indexes $\mu$ and $\nu$ run from 0 to 3, and $\nabla _\mu$ stands for the covariant derivative associated with the
four-dimensional spacetime metric $g_{\mu\nu}$, that is,
the proper spacetime distance between any two points in the
four-dimensional spacetime can be measured by the line element  $ds^2 = g_{\mu\nu} d x^{\mu} dx^{\nu}$.
The current density is given by $J^{\mu} = \rho_0 u ^{\mu}$,  where $ u ^{\mu}$ represents
the fluid four-velocity  and $\rho_0$ denotes the proper rest-mass density.
 The stress-energy
tensor for an ideal fluid is defined by
\begin{align*}
 T^{\mu \nu }  = (\rho + p) u^{\mu} u^{\nu} + p g^{\mu \nu}, 
\end{align*}
in which $\rho$ and $p$  denote the rest energy density (including rest-mass) in the fluid frame and
the pressure, respectively, and $g^{\mu\la}g_{\la\nu}=\delta^{\mu}_{\nu}$ with $\delta^\mu_\nu$ denoting the Kronecker symbol.
The rest energy density $\rho$
can be expressed in terms of the rest-mass density $\rho_0$ and the internal energy $e$ as $\rho=\rho_0 (c^2+e)$, where $c$ denotes the speed of light  in vacuum.

An additional equation for the thermodynamical variables, i.e. the so-called equation of state,
is needed to close the system \eqref{eq:Jmu}--\eqref{eq:Tmunu} for a fixed spacetime.
This paper focuses on the
equation of state describing barotropic fluids 
\begin{equation}\label{eq:EOS}
p=p(\rho),
\end{equation}
where $p(\rho)$ is a function of $\rho$ and satisfies
\begin{equation}\label{eq:EOS-thz01}0<\frac{dp}{d\rho}=p'(\rho)<1.\end{equation}
It is worth noting that the equations \eqref{eq:Tmunu}--\eqref{eq:EinsteinEqs} form a close system if $g^{\mu\nu}$ is given.
In the general theory, the Einstein gravitational field equations relate the curvature of spacetime to the distribution of mass-energy in the following form
\begin{equation}\label{eq:EinsteinEqs}
R^{\mu\nu} - \frac{1}{2} g^{\mu\nu} R = \kappa T^{\mu\nu},
\end{equation}
where $\kappa=\frac{8 \pi {\cal G} }{c^4}$ is Einstein coupling constant, $\cal G$ is Newton's gravitational constant,
and $R^{\mu\nu}$ and $R$ denote the Ricci  tensor and the  scalar curvature, respectively.
For the sake of  convenience, units in which the speed of light $c$ and Newton's gravitational constant $\cal G$ are equal to one will be used
throughout the paper.

The general RHD system in spherically symmetric spacetime is a simple
but  good ``approximate'' model in investigating several astrophysical phenomena, e.g. gamma-ray bursts, spherical accretion onto compact objects, and stellar
collapse, etc. Its numerical methods have also received lots of attentions, see e.g. \cite{May1966,Gourgoulhon1991,RomeroIbanez1996,Yamada1997,Liebendorfer2002,Liebendorfer2004,OConnor2010,Radice2011,Guzman2012,VoglerTemple2012,Park2013}.
The spherically symmetric gravitational metrics in standard Schwarzschild coordinates  are  given by   the line element \cite{VoglerTemple2012}
\begin{equation} \label{eq:spacetime}
ds^2 = -B(t,r)dt^2 + \frac{1}{A(t,r)} dr^2 + r^2 ( d \theta^2 + \sin^2 \theta d \phi^2 ).
\end{equation}
where $B(t,r)$ is called the {\em lapse function}, $(t,r)$ are temporal and radial coordinates, and $\vec x=(x^0,x^1,x^2,x^3)=(t,r,\theta,\phi)$ is the spacetime coordinate system.
This paper is only concerned with the numerical method for the system \eqref{eq:Tmunu}--\eqref{eq:EinsteinEqs} in spherically symmetric spacetime \eqref{eq:spacetime}. Assume that the spherically symmetric metrics are Lipschitz and the stress-energy tensor is bounded in sup-norm, then the system \eqref{eq:Tmunu}--\eqref{eq:EinsteinEqs} is weakly equivalent to the following system \cite{GroahTemple2007}
\begin{align}
  \label{eq:RHD_1}
& \frac{{\partial \vec U}}{{\partial t}} + \frac{ \partial \big( \sqrt{AB} \vec F( \vec U)\big) }{{\partial r}} = \vec S(r,A,B,\vec U),\\[2mm]
\label{eq:RHD_1A}
& \frac{{\partial M}}{{\partial r}} = \frac12 \kappa r^2 {\cal T}^{00}, \\[2mm]
\label{eq:RHD_1B}
& \frac{1}{B} \frac{{\partial B}}{{\partial r}} = \frac{{1 - A}}{{Ar}} + \frac{{\kappa r}}{A} {\cal T}^{11},
\end{align}
where
\begin{align*}
& \vec U = ({\cal T}^{00},{\cal T}^{01} )^{\rm T},\quad \vec F =  ( {\cal T}^{01}, {\cal T}^{11} )^{\rm T},\\
& \vec S = - \sqrt{AB}\bigg(  \frac{2}{r} {\cal T}^{01},
\frac{2}{r} {\cal T}^{11} + \frac{1-A}{2Ar} ( {\cal T}^{00} - {\cal T}^{11} ) + \frac{\kappa r}{A} \big( {\cal T}^{00} {\cal T}^{11} - ({\cal T}^{01})^2 \big) - \frac{2p}{r}  \bigg)^{\rm T},
\end{align*}
and the mass function $M$ is related to $A$ by $A=1-2M/r$. Here
\[
{\cal T}^{00}  = \left( {\rho  + p} \right)W^2  - p,\quad {\cal T}^{01}  = \left( {\rho  + p} \right)W^2 v,\quad {\cal T}^{11}  = \left( {\rho v^2  + p} \right)W^2 ,
\]
are the stress-energy tensor in locally flat Minkowski spacetime,
related to $T^{\mu \nu}$ by
$$
{\cal T}^{00} = BT^{00},\quad {\cal T}^{01 } = \sqrt{\frac{B}{A}} T^{01},\quad {\cal T}^{11} = \frac{1}{A} T^{11},
$$
and $W=1/\sqrt{1-v^2}$ is the Lorentz factor with the velocity
$$
v:=\frac{1}{\sqrt{AB}} \frac{u^1}{u^0}.
$$

Eq. \eqref{eq:RHD_1A} may be replaced with
\begin{align}
  \label{eq:RHD_2}
 \frac{{\partial M}}{{\partial t}} = - \frac12 \kappa r^2\sqrt{AB} {\cal T}^{01},
\end{align}
to derive another equivalent system \eqref{eq:RHD_1}, \eqref{eq:RHD_1B}, and \eqref{eq:RHD_2}.

The eigenvalues of the Jacobian matrix $\pt (\sqrt{AB} \vec F) /\pt \vec U$ of \eqref{eq:RHD_1} with \eqref{eq:EOS} are
\[
\lambda _ -   = \sqrt {AB} \bigg( \frac{{v - c_s }}{{1 - vc_s }}\bigg),
\quad \lambda _ +   = \sqrt {AB} \bigg( \frac{{v + c_s }}{{1 + vc_s }} \bigg),
\]
where  $c_s = \sqrt{p'(\rho)}$ denotes the local sound speed. Corresponding right eigenvectors $\vec R_{\pm}$, may be given as follows
\[
\vec R_{-}  = \left( \begin{array}{c}
 1 - vc_s  \\
 v - c_s  \\
 \end{array} \right),
 \quad \vec R_{+}  = \left( \begin{array}{c}
 1 + vc_s  \\
 v + c_s  \\
 \end{array} \right),
\]
and the inverse of the matrix $\vec R:=(\vec R_{-} ,\vec R_{+} )$ is
\[
\vec R^{ - 1}  =
\frac{{W^2 }}{{2c_s }}\left( {\begin{array}{*{20}c}
   {v + c_s }~ & ~~{ - (1 + vc_s )}  \\
   {c_s  - v}~ & ~~{1 - vc_s }  \\
\end{array}} \right).
\]
The condition \eqref{eq:EOS-thz01} implies that $\lambda_- < \lambda_+$. Thus the system \eqref{eq:RHD_1} is strictly hyperbolic.
Moreover, 
both characteristic fields related to $\lambda_{\pm}$ are genuinely nonlinear if and only if the function $p(\rho)$ further satisfies \cite{GroahTemple2007}
\begin{equation}\label{eq:genuinely nonlinear}
{p''(\rho )} >  - \frac{{2p'(\rho )(1 - p'(\rho ))}}{{\rho  + p(\rho )}},
\end{equation}
which does always hold for
\begin{equation}\label{eq:EOS-thz02}
p(\rho)=\sigma^2 \rho,\quad \sigma \in (0,1).
\end{equation}
The Riemann invariants $\psi _ \pm$ associated with the characteristic field $\lambda _ \pm$ can be obtained as follows \cite{GroahTemple2007}
\begin{equation}\label{eq:RIN}
\psi _ \pm   = \frac{1}{2}\ln \left( {\frac{{1 + v}}{{1 - v}}} \right) \mp \int_{}^\rho  {\frac{{\sqrt {p'(\omega )} }}{{\omega  + p(\omega )}}} d\omega,
\end{equation}
which will play a pivotal role in resolving the centered rarefaction waves
in the direct Eulerian GRP scheme for the RHD equations \eqref{eq:RHD_1}.


In the smooth region, by using \eqref{eq:RHD_1A} and \eqref{eq:RHD_1B},
the RHD equations \eqref{eq:RHD_1} can  be reformed
in  the primitive variable vector $\vec V:=(\rho ,v)^{\rm T}$  as follows
\begin{equation}  \label{eq:RHD_pri}
\frac{{\partial \vec V}}{{\partial t}} + \vec J \frac{ \partial \vec V }{{\partial r}} = \vec H,\\[2mm]
\end{equation}
where
\begin{align*}
&\vec J = \frac{\sqrt{AB}}{{1 - v^2 c_s^2 }}\left( {\begin{array}{*{20}c}
   {v(1 - c_s^2 )} & {\rho  + p}  \\
   {\frac{{(1 - v^2 )^2 c_s^2 }}{{\rho  + p}}} & {v(1 - c_s^2 )}  \\
\end{array}} \right),\\[3mm]
&
\vec H =(H_1,H_2)^{\rm T}=  - \frac{{\sqrt {AB} }}{{r(1 - v^2 c_s^2 )}}\left( \begin{array}{c}
 2v(\rho  + p)\left( {1 - \frac{{\kappa r^2 (\rho  + p)}}{4A}} \right) \\
  (1 - v^2 )\left( { - 2v^2c_s^2  + \frac{{(1 - A)(1 - v^2 c_s^2 )}}{{2A }} + \frac{{\kappa  r^2 (p + \rho v^2 c_s^2 )}}{2A}} \right) \\
 \end{array} \right).
\end{align*}
By using \eqref{eq:RHD_pri}, one can derive the following differential relations of the Riemann invariants 
\begin{align} 
 \frac{{D_ \pm  \psi _ \mp  }}{{Dt}} &= \frac{1}{{1 - v^2 }}\frac{{D_ \pm  v}}{{Dt}} \pm \frac{{c_s }}{{\rho  + p}}\frac{{D_ \pm  \rho }}{{Dt}}
 \label{eq:Diff_RIN}
  = \frac{1}{{1 - v^2 }}H_2  \pm \frac{{c_s }}{{\rho  + p}}H_1  = :s_ \mp ,
\end{align}
where 
$$
\frac{{D_ \pm    }}{{Dt}} := \frac{\pt }{\pt t} + \lambda_{\pm } \frac{\pt }{\pt r},
$$
denote the total derivative operators along the characteristic curves  $\frac{d r }{d t}=\lambda_{\pm}$.

\section{Numerical scheme}
\label{sec:scheme}

\subsection{The outline of the GRP scheme}
\label{sec:outline}

This subsection gives the outline of the GRP scheme.
For the sake of simplicity,
 the equally spaced grid points $\left\{r_{j+\frac12} = \left(j+\frac12\right)\Delta r\right\}$   are used for
the spatial domain $\Omega$ and the cell is denoted by $I_{j} = [r_{j-\frac12},r_{j+\frac12}]$, $j \in \Ze^+$.
The time domain $[0, T]$ is also divided into the (non-uniform) grid $\{t_0=0, t_{n+1}=t_n+\Delta t_{n}, n\geq 0\}$
with the time step size $\Delta t_{n}$ determined by
$$
\Delta t_{n}= C_{cfl}\frac{\Delta r}{\max\limits_{j}\left\{|\lambda_-({\vec U}_{j}^n,A_{j}^n,B_{j}^n)|,
|\lambda_+({\vec U}_{j}^n,A_{j}^n,B_{j}^n)|\right\}},
$$
where ${\vec U}_{j}^n$, $A_{j}^n$, and $B_{j}^n$ approximate the values of $\vec U(t,r)$, $A(t,r)$ and $B(t,r)$
at the point $(t_n,r_{j})$, respectively, and $C_{cfl}$ is the CFL number.

Assume that the ``initial'' data at time $t=t_n$ are piecewise linear functions as follows
\begin{equation}\label{eq:InitPLF}
\left\{ \begin{array}{l}
 \vec U_h (t_n,r ) = \vec U_j^n  + \vec  \sigma _j^n (r - r_j )=:\vec U^n_{j}(r), \\[3mm]
 \left( \begin{array}{c}
 A_h (t_n,r ) \\
 B_h (t_n,r ) \\
 \end{array} \right) = \dfr{{r_{j + \frac{1}{2}}  - r}}{{\Delta r}}\left( \begin{array}{c}
 A_{j - \frac{1}{2}}^n  \\
 B_{j - \frac{1}{2}}^n  \\
 \end{array} \right) + \dfr{r - r_{j - \frac{1}{2}} }{\Delta r}  \left( \begin{array}{c}
 A_{j + \frac{1}{2}}^n  \\
 B_{j + \frac{1}{2}}^n  \\
 \end{array} \right),
 \end{array} \right.
\end{equation}
for $r \in I_j$, where $A_h (t_n,r )$ and $B_h (t_n,r )$ are continuous at cell interfaces
$r_{j\pm \frac12}$.

\noindent
{\tt Step I}. Evaluate the point values $\vec U_{j+\frac12}^{n+\frac12}$ approximating $\vec U(t_{n+\frac12},r_{j+\frac12})$ by
\begin{equation}\label{eq:TalyorGRP}
\vec U_{j + \frac{1}{2}}^{n + \frac{1}{2}}  = \vec U_{j + \frac{1}{2}}^{\mbox{\tiny RP},n}  +
\frac{{\Delta t_n }}{2} \left( {\frac{{\partial \vec U}}{{\partial t}}} \right)_{j + \frac{1}{2}}^{\mbox{\tiny GRP},n} ,
\end{equation}
where  $\vec U_{j + \frac{1}{2}}^{\mbox{\tiny RP},n}$ is the values at $r=r_{j+\frac12}$ of the solutions to the
following Riemann problem of the homogeneous hyperbolic conservation laws
\begin{equation*}
  \label{eq:RP}
\begin{cases}
\dfr{{\partial \vec U}}{{\partial t}} + \sqrt {A_{j + \frac{1}{2}}^n B_{j + \frac{1}{2}}^n } \dfr{{\partial \vec F(\vec U)}}{{\partial r}} = 0,    \qquad r>0,~t>t_n,\\[3mm]
 \vec  U(t_n,r)=
     \begin{cases}
     \vec U_{j+\frac12,L}^n := \vec U_h (t_n,r_{j+\frac12} -0 ), & r<r_{j+\frac{1}{2}},\\
     \vec U_{j+\frac12,R}^n := \vec U_h (t_n,r_{j+\frac12} +0 ), & r>r_{j+\frac{1}{2}},
     \end{cases}
\end{cases}
\end{equation*}
and $\left( \partial \vec U / \partial t \right)_{j + \frac{1}{2}}^{\mbox{\tiny GRP},n}$ is analytically derived by a second order accurate resolution of the local generalized
Riemann problem (GRP) at each point $(t_n,r_{j+\frac12})$, i.e.
\begin{equation}
  \label{eq:GRP}
\begin{cases}
\mbox{Eqs. \eqref{eq:RHD_1}},    \qquad r>0,~t>t_n,\\[2mm]
 \vec  U(t_n,r)=
     \begin{cases}
     \vec U^n_{j}(r), & r<r_{j+\frac{1}{2}},\\
     \vec U^n_{j+1}(r), & r>r_{j+\frac{1}{2}}.
     \end{cases}
\end{cases}
\end{equation}
The calculation of $\left( \partial \vec U / \partial t \right)_{j + \frac{1}{2}}^{\mbox{\tiny GRP},n}$ is one of the key elements in the GRP scheme and will be given in Section \ref{sec:resoluGRP}.

{\tt Step II}. Calculate the point values $A_{j+\frac12}^{n+\frac12}$ and $B_{j+\frac12}^{n+\frac12}$,
which  are approximation of $A(t_{n+\frac12},r_{j+\frac12})$ and $B(t_{n+\frac12},r_{j+\frac12})$, respectively, by
\begin{align*}
&
M_{j + \frac{1}{2}}^{n + \frac{1}{2}}  = M_{j + \frac{1}{2}}^n  + \frac{{\Delta t_n }}{2}\left( {\frac{{\partial M}}{{\partial t}}} \right)_{j + \frac{1}{2}}^n  = M_{j + \frac{1}{2}}^n  - \frac{{\Delta t_n }}{4}\kappa r_{j + \frac{1}{2}}^2 \sqrt {A_{j + \frac{1}{2}}^n B_{j + \frac{1}{2}}^n } {\cal T}^{01} \left( {\vec U_{j + \frac{1}{2}}^{\mbox{\tiny RP},n} } \right), \\
&
A_{j + \frac{1}{2}}^{n + \frac{1}{2}}  = 1 - 2M_{j + \frac{1}{2}}^{n + \frac{1}{2}} /r_{j + \frac{1}{2}} ,\\
&
\ln B_{j + \frac{1}{2}}^{n + \frac{1}{2}}  = \ln B_{j - \frac{1}{2}}^{n + \frac{1}{2}}  + \frac{{\Delta r}}{2}\left( {\frac{{1 - A_{j - \frac{1}{2}}^{n + \frac{1}{2}} }}{{A_{j - \frac{1}{2}}^{n + \frac{1}{2}} r_{j - \frac{1}{2}} }} + \frac{{\kappa r_{j - \frac{1}{2}} }}{{A_{j - \frac{1}{2}}^{n + \frac{1}{2}} }}{\cal T}^{11} \left( {\vec U_{j - \frac{1}{2}}^{n + \frac{1}{2}} } \right) + \frac{{1 - A_{j + \frac{1}{2}}^{n + \frac{1}{2}} }}{{A_{j + \frac{1}{2}}^{n + \frac{1}{2}} r_{j + \frac{1}{2}} }} + \frac{{\kappa r_{j + \frac{1}{2}} }}{{A_{j + \frac{1}{2}}^{n + \frac{1}{2}} }}{\cal T}^{11} \left( {\vec U_{j + \frac{1}{2}}^{n + \frac{1}{2}} } \right)} \right).
\end{align*}

{\tt Step III}. Approximately evolve the solution vector $\vec U$ at time $t_{n+1}$ of \eqref{eq:RHD_1} by a second-order accurate Godunov-type scheme
\begin{align}\nonumber
\vec U_j^{n + 1}  = &\vec U_j^n  - \dfr{{\Delta t_n }}{{\Delta r}}
\left( {\sqrt {A_{j + \frac{1}{2}}^{n + \frac{1}{2}} B_{j + \frac{1}{2}}^{n + \frac{1}{2}} } \vec F\left( {\vec U_{j + \frac{1}{2}}^{n + \frac{1}{2}} } \right)
    - \sqrt {A_{j - \frac{1}{2}}^{n + \frac{1}{2}} B_{j - \frac{1}{2}}^{n + \frac{1}{2}} } \vec F\left( {\vec U_{j - \frac{1}{2}}^{n + \frac{1}{2}} } \right)} \right) \\[2mm]
    \label{eq:evolve}
&
+ \dfr{{\Delta t_n }}{2}
\bigg( {\vec S \left( {r_{j - \frac{1}{2}} ,A_{j - \frac{1}{2}}^{n + \frac{1}{2}} ,B_{j - \frac{1}{2}}^{n + \frac{1}{2}} ,\vec U_{j - \frac{1}{2}}^{n + \frac{1}{2}} } \right)
+ \vec S\left( {r_{j + \frac{1}{2}} ,A_{j + \frac{1}{2}}^{n + \frac{1}{2}} ,B_{j + \frac{1}{2}}^{n + \frac{1}{2}} ,\vec U_{j + \frac{1}{2}}^{n + \frac{1}{2}} } \right)} \bigg).
\end{align}

{\tt Step V}. Calculate   $A_{j+\frac12}^{n+1}$ and $B_{j+\frac12}^{n+1}$ by
\begin{align*}
&
M_{j + \frac{1}{2}}^{n + 1}  = M_{j - \frac{1}{2}}^{n + 1}  + \frac{{\Delta r}}{2}\kappa r_j^2 {\cal T}^{00} \left( {\vec U_j^{n + 1} } \right), \quad A_{j + \frac{1}{2}}^{n + 1}  = 1 - 2M_{j + \frac{1}{2}}^{n + 1} /r_{j + \frac{1}{2}} ,\\[2mm]
&
\ln B_{j + \frac{1}{2}}^{n + 1}  = \ln B_{j - \frac{1}{2}}^{n +1}  + {\Delta r}
\left( \frac{1 - A_j^{n + 1} }{A_j^{n + 1} r_j }
 + \frac{\kappa r_j }{A_j^{n + 1}}
 {\cal T}^{11} \left( \vec U_j^{n + 1}  \right)   \right), \quad A_j^{n+1} := \frac12 \big( A_{j-\frac12}^{n+1} + A_{j+\frac12}^{n+1} \big).
\end{align*}

{\tt Step IV}.  Update the slope $\vec \sigma_j^{n+1}$ component-wisely in the local characteristic variables by
\begin{equation}\label{eq:limiter}
\vec \sigma _j^{n + 1}  = \vec R_j {\rm minmod} \left(   \frac{\theta}{\Delta r} \vec R_j^{-1} \left(\vec U_j^{n + 1}  - \vec U_{j - 1}^{n + 1} \right),
\vec R_j^{-1} \vec \sigma _j^{n + 1, - } , \frac{\theta}{\Delta r} \vec R_j^{-1} \left(\vec U_{j+1}^{n + 1}  - \vec U_{j }^{n + 1} \right) \right),
\end{equation}
where $\vec R_j := \vec R(\vec U_j^{n+1})$,  the parameter $\theta \in [1,2)$, and
\[
\vec \sigma _j^{n + 1, - }  = \frac{1}{{\Delta r}}\left( {\vec U_{j + \frac{1}{2}}^{n + 1, - }  - \vec U_{j - \frac{1}{2}}^{n + 1, - } } \right),\quad
\vec U_{j + \frac{1}{2}}^{n + 1, - }  = \vec U_{j + \frac{1}{2}}^{\mbox{\tiny RP},n}  +
\Delta t_n \left( {\frac{{\partial \vec U}}{{\partial t}}} \right)_{j + \frac{1}{2}}^{\mbox{\tiny GRP},n}.
\]

The paper does not pay much attention to  the treatment of singularity in the source $\vec S$ of \eqref{eq:RHD_1} and the imposition of boundary conditions at the symmetric center $r=0$ for the GRP scheme, the readers are referred to \cite{LiLiuSun2009} for the details.

\subsection{Resolution of generalized Riemann problem}
\label{sec:resoluGRP}

This subsection resolves the   GRP \eqref{eq:GRP} in order to get $\left( \partial \vec U / \partial t \right)_{j + \frac{1}{2}}^{\mbox{\tiny GRP},n}$ in \eqref{eq:TalyorGRP}.
For the sake of convenience, the subscript $j$ and the superscript $n$ will be ignored
and the local GRP \eqref{eq:GRP} is transformed with a linear coordinate transformation to the ``non-local'' GRP for \eqref{eq:RHD_1} with the initial data
\begin{equation}
  \label{eq:InitialData}
    \vec U(0,r) =
    \begin{cases}
      \vec U_L + (r-r_0)\vec U'_L,\ \ & r < r_0,\\[3mm]
      \vec U_R + (r-r_0)\vec U'_R, & r>r_0,
    \end{cases}
\end{equation}
where $\vec U_L, \vec U_R, \vec U'_L$ and $\vec U'_R$
are corresponding constant vectors.
The notations $\vec U_{j+1/2}^{\mbox{\tiny RP},n}$ and $\left(\vec U_{t}\right)_{j+1/2}^{\mbox{\tiny GRP},n}$ will also be simply replaced with $\vec U_{*}$ and
$\left(\vec U_{t}\right)_{*}$, respectively, which
also denote the limiting states at $r=r_0$, as $t\rightarrow 0^+$.

Since both $A$ and $B$ are locally Lipschitz continuous, the initial structure of the solution $\vec U^{\mbox{\tiny GRP}}(t,r)$
to the GRP for \eqref{eq:RHD_1} with \eqref{eq:InitialData} may be determined by the solution $\vec U^{\mbox{\tiny RP}}(t,r)=\vec \omega((r-r_0)/t;
\vec U_L, \vec U_R)$ of the associated (classical) Riemann problem (RP) \cite{Ben-Falcovitz:1984,LiYu1985}
\begin{equation}
  \label{eq:InitialData-RP}
\begin{cases}
\dfr{{\partial \vec U}}{{\partial t}} + \sqrt {A_* B_* } \dfr{{\partial \vec F(\vec U)}}{{\partial r}} = 0,    \qquad t>0,\\[3mm]
    \vec U(0,r) =
    \begin{cases}
      \vec U_L,\ \ & r < r_0,\\[1mm]
      \vec U_R, & r>r_0,
    \end{cases}
\end{cases}
\end{equation}
and
\begin{equation*}
  \lim_{t \rightarrow 0^+} \vec U^{\mbox{\tiny GRP}}(t,t\la+r_0)
  =\vec\omega(\la;\vec U_L,\vec U_R),\quad r-r_0=t\la.
\end{equation*}
The local wave configuration around the singularity point
$(t,r)=(0,r_0)$ of the GRP for \eqref{eq:RHD_1} with \eqref{eq:InitialData} depends on the values
of four constant vectors and
consists of two nonlinear waves, each of which may be rarefaction  or shock wave.
Fig.~\ref{fig:wave-pattern-a} shows the schematic description of a local wave configuration:
a rarefaction wave moving to the left and a shock  to the right.
Fig.~\ref{fig:wave-pattern-b} displays corresponding local wave configuration of the RP \eqref{eq:InitialData-RP}.
In those schematic descriptions,  $\vec U_*$  denotes  the limiting state at $r=r_0$, as $t\rightarrow 0^+$,
 and $\alpha$ and $\beta$ are characteristic coordinate within the rarefaction wave and will be introduced in Section \ref{sec:resoluRare}.

\begin{figure}[!htbp]
  \centering
    \includegraphics[width=0.6\textwidth]{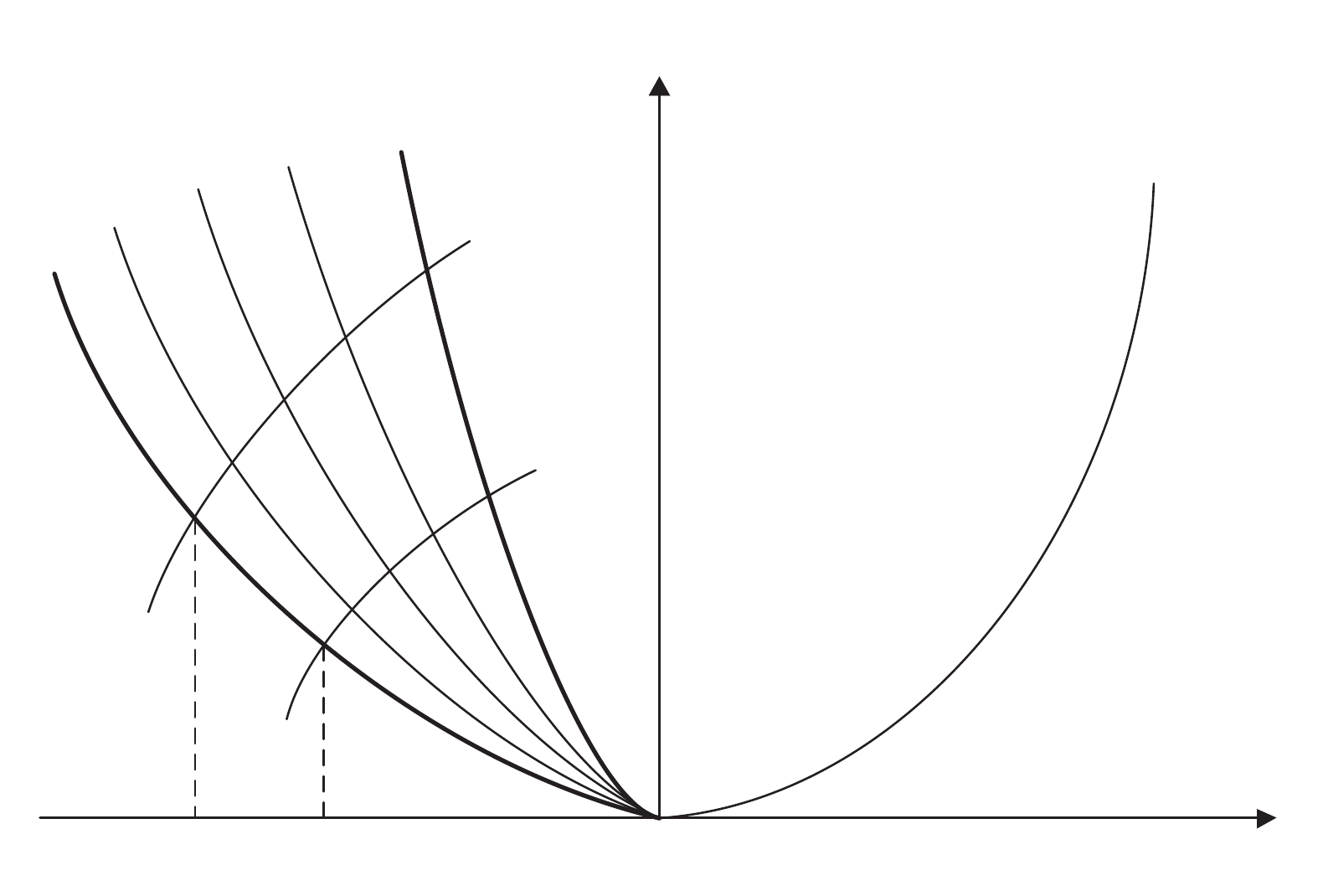}
    \put(-288,133){\small $\beta=\beta_L$} \put(-208,158){\small $\beta=\beta_*$}
    \put(-138,173){\small $t$}
    \put(-266,155){\small $\mbox{rarefaction}$}
    \put(-32,120){\small $\mbox{shock}$}
    \put(-248,4){\small $\bar{\bar \al}+r_0$}
    \put(-212,5){\small $\bar \al+r_0$}
    \put(-175,133){\small $\al = \bar{\bar \al}$}
    \put(-168,90){\small $\al = {\bar \al}$}
    \put(-275,40){\small $\vec U_L(t,r)$}
    \put(-148,5){\small $r=r_0$}
    \put(-15,5){\small $r$}
    \put(-116,90){\small $\vec U_*(t,r)$}
    \put(-60,40){\small $\vec U_R(t,r)$}
  \caption{The schematic description of a local wave configuration for
    the GRP for \eqref{eq:RHD_1} with \eqref{eq:InitialData} with $0\leq t\ll 1$.}
  \label{fig:wave-pattern-a}
\end{figure}

\begin{figure}[!htbp]
  \centering
    \includegraphics[width=0.6\textwidth]{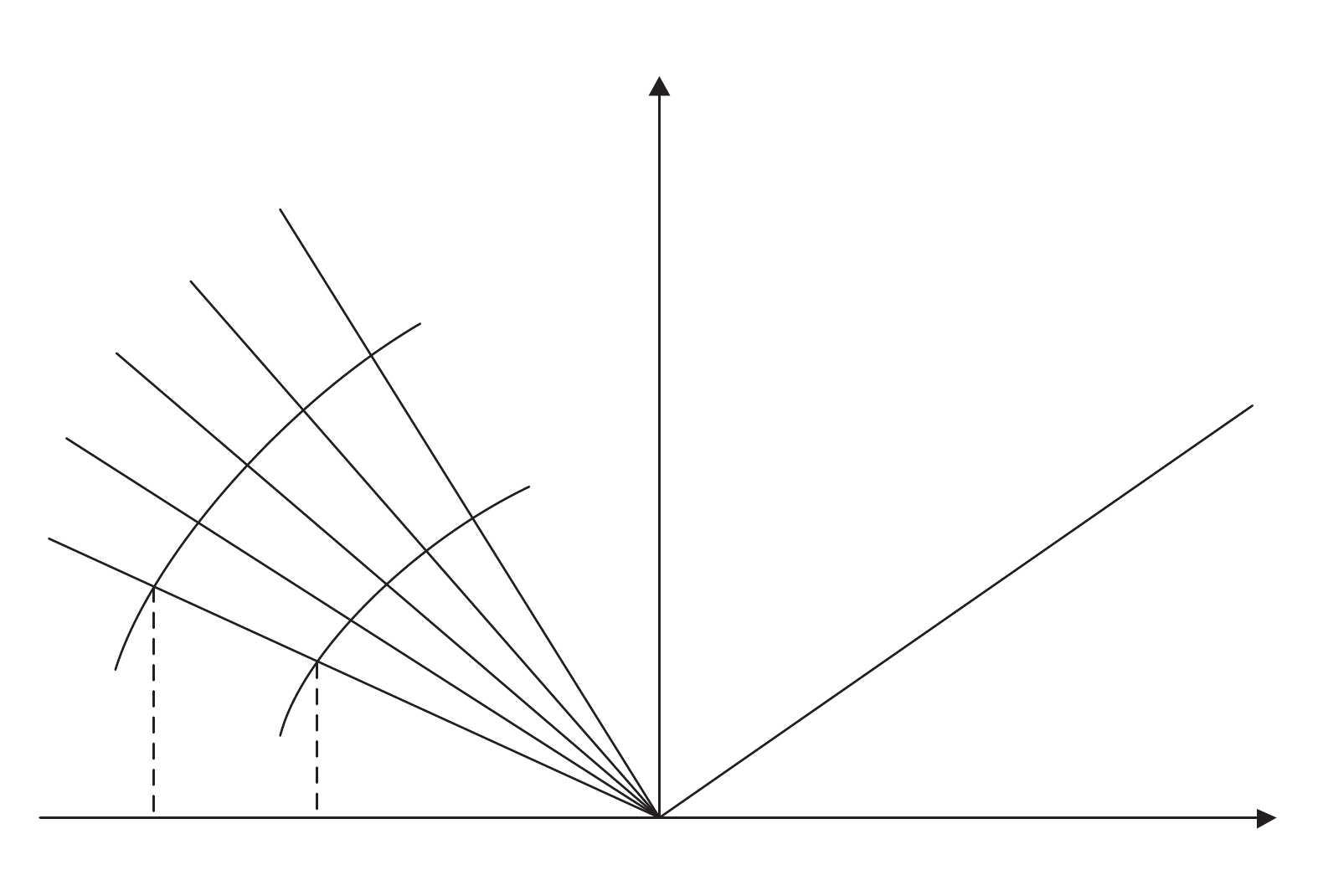}
    \put(-298,72){\small $\beta=\beta_L$} \put(-228,145){\small $\beta=\beta_*$}
    \put(-138,173){\small $t$}
    \put(-288,118){\small $\mbox{rarefaction}$}
    \put(-45,102){\small $\mbox{shock}$}
    \put(-256,4){\small $\bar{\bar \al}+r_0$}
    \put(-215,5){\small $\bar \al+r_0$}
    \put(-185,120){\small $\al = \bar{\bar \al}$}
    \put(-170,88){\small $\al = {\bar \al}$}
    \put(-270,35){\small $\vec U_L$}
    \put(-148,5){\small $r=r_0$}
    \put(-15,5){\small $r$}
    \put(-110,90){\small $\vec U_*$}
    \put(-60,40){\small $\vec U_R$}
  \caption{The schematic description of a local wave configuration for
    the associated (classical) Riemann problem \eqref{eq:InitialData-RP}.}
  \label{fig:wave-pattern-b}
\end{figure}

Although there are other local wave configurations, we will restrict our discussion to the local wave configuration shown in
Figs.~\ref{fig:wave-pattern-a} and \ref{fig:wave-pattern-b}.
Other local wave
configurations can be dealt with similarly and are considered in the code.
The solutions to the GRP inside the left, intermediate and right subregions are denoted by $\vec U_L(t,r),~\vec U_*(t,r)$, and $\vec U_R(t,r)$, respectively.
For any variable $V$, which may be  $\vec U$ or the derivatives $\vec U_t$ or $\vec U_r$ etc.,
the symbols $V_L$ and $V_R$  are used to denote the limiting values of $V$ as $t \to 0^+$ in the left and right subregions adjacent to $r$-axis, respectively,
and $V_*$ is used to denote the limiting values of $V$ as $t \to 0^+$ in the intermediate subregions. The main task of the direct Eulerian GRP scheme is to form a linear
algebraic system
\begin{equation}
  \label{eq:AlgebricEqns}
  \begin{cases}
    a_L \left(\dfr{\pt \rho}{\pt t}\right)_* + b_L \left(\dfr{\pt v}{\pt t}\right)_* = d_L,\\[4mm]
    a_R \left(\dfr{\pt \rho}{\pt t}\right)_* + b_R \left(\dfr{\pt v}{\pt t}\right)_* = d_R,
  \end{cases}
\end{equation}
by  resolving the left wave and the right wave as shown in
Figure \ref{fig:wave-pattern-a}, respectively.  Solving this system gives the values
of the derivatives  $(\pt \rho / \pt t)_*$ and $(\pt v / \pt t)_*$, and closes the
calculation in \eqref{eq:TalyorGRP}.

\subsubsection{Resolution of the rarefaction wave}
\label{sec:resoluRare}

This section resolves the left rarefaction wave shown in Figure \ref{fig:wave-pattern-a} for the
GRP \eqref{eq:RHD_1} and \eqref{eq:InitialData},  and gets the first equation in \eqref{eq:AlgebricEqns}.

The relation \eqref{eq:Diff_RIN} for the Riemann invariant $\psi _ -$ will be used to resolve the left rarefaction wave by
tracking the directional derivatives $\frac{D_ -  \psi _ -  }{Dt}$ in the rarefaction fan. For this purpose, a
local coordinate transformation is first introduced  within the rarefaction wave, i.e. the characteristic coordinates, similar to those in \cite{Ben-Li-Warnecke,Li-GXChen,Yang-He-Tang_GRP-RHD1D}.
The region of the left rarefaction wave can
be described by the set ${\cal R}:=\big\{\big(\alpha(t,r),
\beta(t,r)\big)| \beta \in [\beta_L,\beta_*], -\infty < \al \leq
0\big\}$, where $\beta_L =\la_-(\vec U_L)$ and $\beta_* =\la_-(\vec
U_*)$, and $\beta=\beta(t,r)$ and $\al=\al(t,r)$ are the integral
curves of the following equations
\begin{equation}
  \label{eq:IntegralCurves}
  \dfr{d r}{d t} =  \la_-,\quad
  \dfr{d r}{d t} =  \la_+,
\end{equation}
respectively.  Here $\beta$ and $\al$ have been denoted as follows:
$\beta$ is the initial value of the slope $\lambda_-$ at the
singularity point $(t,r) = (0,r_0)$, and $\al + r_0$ for the transversal
characteristic curves is the $r$-coordinate of the intersection point
with the leading $\beta$-curve, see Figure
\ref{fig:wave-pattern-a}. In this case, due to the local
transformation between $(t,r)$ and $(\alpha,\beta)$, all physical
quantities can be considered as functions of the local coordinates
$(\alpha,\beta)$ and the limiting states at $(t,r)=(0,0+)$ may be
represented as $\vec U_*= \vec U(0,\beta_*)$, etc. On the other hand,
the coordinates $(t,r)$ inside the left rarefaction fan shown in Figure \ref{fig:wave-pattern-a}
can be expressed in terms of $\al$ and $\beta$ as follows
$$
t=t(\al,\beta),\quad r=r(\al,\beta).
$$
Using the equations in \eqref{eq:IntegralCurves} gives
\begin{equation}
  \label{eq:pt2t-ab}
  \dfr{\pt r}{\pt \al} = \la_- \dfr{\pt t}{\pt \al}, \quad
  \dfr{\pt r}{\pt \beta} = \la_+\dfr{\pt t}{\pt
    \beta},\end{equation}
which respectively imply
\begin{equation}
  \label{eq:pt2t-ab_DT}
  \dfr{\pt }{\pt \al} =  \dfr{\pt t}{\pt \al} \dfr{D_-}{Dt} , \quad
    \dfr{\pt }{\pt \beta} = \dfr{\pt t}{\pt \beta} \dfr{D_{+}}{Dt}.
  \end{equation}
Differentiating the first equation in \eqref{eq:pt2t-ab} with respect to $\beta$ and the second with respect to $\alpha$ may further give
\begin{equation}
  \label{eq:pt2t}
  \dfr{\pt^2 t}{\pt \al \pt \beta} = \dfr{1}{\la_+ - \la_-}
  \left(
    \dfr{\pt\la_-}{\pt \beta}
    \dfr{\pt t}{\pt \al} - \dfr{\pt \la_+}{\pt \al}
    \dfr{\pt t}{\pt \beta}
  \right).
\end{equation}
At $\al =0$, the definition of the characteristic coordinates yields
\begin{equation}
  \label{eq:Ch-Der}
  \dfr{\pt \lambda_- }{\pt \beta} (0,\beta) = 1,\quad
  \dfr{\pt t}{\pt \beta}(0,\beta) = 0,\quad  \forall \beta \in [\beta_L ,  \beta_*].
\end{equation}
With the above two relations, setting $\al =0$ in \eqref{eq:pt2t} gives
\begin{equation}
    \label{eq:pt2t:al0}
    \dfr{\pt^2 t}{\pt \al \pt \beta}(0,\beta) = \dfr{1}{\lambda_+(0,\beta) - \lambda_-(0,\beta)}
    \dfr{\pt t}{\pt \al} (0,\beta).
\end{equation}

Our main result is given in following theorem.
\begin{thm}\label{thm:001}
  The limiting values $\left(\frac{\pt \rho}{\pt t}\right)_*$ and
  $\left(\frac{\pt v}{\pt t}\right)_*$ satisfy
  \begin{equation}
    \label{eq:Rare}
    a_L \left(\dfr{\pt \rho}{\pt t}\right)_* + b_L \left(\dfr{\pt v}{\pt t}\right)_* = d_L,
  \end{equation}
  where
  \begin{equation}
    \label{eq:aLbL}
    a_L=\left( \frac{c_s}{\rho +p}\right)_*,\quad
    b_L  = \left( \frac{1}{1 - v^2}\right)_*,
  \end{equation}
  and
 \begin{align}\nonumber
d_L = & \left( {\frac{{\lambda _ +  }}{{\lambda _ +   - \lambda _ -  }}} \right)_*  \left( \dfr{D_ -  \psi _ -  }{Dt} \right)_L \exp \left( -\int_{\beta _L }^{\beta_*}  {\dfr{d\hat\beta }{\lambda _ + (0,\hat\beta)  - \lambda _ - (0,\hat\beta) }}  \right) - \left( {\frac{{\lambda _ -  s_ -  }}{{\lambda _ +   - \lambda _ -  }}} \right)_*
\\[3mm]
\label{eq:dL}
&+ \left( {\frac{{\lambda _ +  }}{{\lambda _ +   - \lambda _ -  }}} \right)_*
\int_{\beta _L }^{\beta_*}   \left( \frac{s_ -  }{\lambda _ +   - \lambda _ -  } \right)
 (0,\hat\beta )\exp \left( -\int_{\hat\beta }^{\beta_*}   \frac{d\omega }{\lambda _ +  (0,\omega ) - \lambda _ -  (0,\omega )} \right)d\hat\beta,
\end{align}
   is a function of the initial data $\vec U_L, \vec U'_L$,
  and the limiting values $\vec U_*$ or $\vec U^{\mbox{\tiny RP}}$ of
  the (classical) Riemann problem solution $\vec \omega((r-r_0)/t,\vec
  U_L,\vec U_R)$.
\end{thm}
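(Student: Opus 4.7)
The plan is to exploit the scalar relation \eqref{eq:Diff_RIN} for the Riemann invariant $\psi_-$ inside the left rarefaction fan. Working in the characteristic coordinates $(\alpha,\beta)$ set up before the theorem, I will derive an ODE in $\beta$ along the leading edge $\alpha=0$ for $\frac{D_-\psi_-}{Dt}$, integrate it from $\beta_L$ to $\beta_*$, and then combine the tail value with the pointwise identity $\frac{D_+\psi_-}{Dt}\big|_*=s_-(\vec U_*)$ to solve for $(\partial_t\psi_-)_*$. Expanding $(\partial_t\psi_-)_*$ via the chain rule applied to the explicit formula \eqref{eq:RIN} will then produce the coefficients $a_L$, $b_L$ and the right-hand side $d_L$ in one stroke.

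For the ODE derivation, regarding $\psi_-$ as a function of $(\alpha,\beta)$, the identities \eqref{eq:pt2t-ab_DT} together with \eqref{eq:Diff_RIN} give $\partial_\alpha\psi_-=(\partial_\alpha t)\,\frac{D_-\psi_-}{Dt}$ and $\partial_\beta\psi_-=(\partial_\beta t)\,s_-$. Imposing equality of $\partial_\beta\partial_\alpha\psi_-$ and $\partial_\alpha\partial_\beta\psi_-$ and then restricting to $\alpha=0$, the degeneracy $\partial_\beta t(0,\beta)=0$ from \eqref{eq:Ch-Der} kills two terms, while the remaining two terms share the common factor $\partial_\alpha t(0,\beta)$ by \eqref{eq:pt2t:al0}. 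Cancelling this factor yields the linear first-order ODE
$$
\frac{d}{d\beta}\!\left(\frac{D_-\psi_-}{Dt}\right)\!(0,\beta)+\frac{1}{\lambda_+(0,\beta)-\lambda_-(0,\beta)}\,\frac{D_-\psi_-}{Dt}(0,\beta)=\frac{s_-(0,\beta)}{\lambda_+(0,\beta)-\lambda_-(0,\beta)}.
$$

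Next I would solve this ODE by the integrating factor $\exp\bigl(\int_{\beta_L}^{\beta} d\hat\beta/(\lambda_+-\lambda_-)\bigr)$ on $[\beta_L,\beta_*]$, with the initial value $(D_-\psi_-/Dt)_L$ computed from the primitive form \eqref{eq:RHD_pri} applied to $\vec U_L,\vec U'_L$; the closed-form answer is precisely the two-term expression inside the square brackets of $d_L$. At the tail, the identity $\frac{D_+\psi_-}{Dt}=s_-(\vec U_*)$ together with the definition of $\frac{D_-\psi_-}{Dt}(0,\beta_*)$ form a $2\times 2$ linear system in $(\partial_t\psi_-)_*$ and $(\partial_r\psi_-)_*$; solving for $(\partial_t\psi_-)_*$ gives
$$
(\partial_t\psi_-)_*=\left(\frac{\lambda_+}{\lambda_+-\lambda_-}\right)_*\!\frac{D_-\psi_-}{Dt}(0,\beta_*)-\left(\frac{\lambda_-\,s_-}{\lambda_+-\lambda_-}\right)_*,
$$
which, after substituting the ODE solution, matches $d_L$ term by term. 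Finally, differentiating \eqref{eq:RIN} in $t$ at $\vec U_*$ produces $\partial_t\psi_-=\frac{1}{1-v^2}\partial_t v+\frac{c_s}{\rho+p}\partial_t\rho$, reading off $a_L=\left(\frac{c_s}{\rho+p}\right)_*$ and $b_L=\left(\frac{1}{1-v^2}\right)_*$ as in \eqref{eq:aLbL}.

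The main obstacle is the reduction at $\alpha=0$: the mixed-partials identity would be vacuous at the leading edge were it not for the simultaneous vanishing of $\partial_\beta t$ and the proportionality $\partial^2 t/\partial\alpha\partial\beta\propto \partial_\alpha t$ guaranteed by \eqref{eq:pt2t:al0}, which together convert an apparent triviality into a well-posed first-order linear ODE with bounded coefficients (since the strict hyperbolicity ensures $\lambda_+-\lambda_->0$). Everything downstream, namely the integrating-factor solution and the chain-rule bookkeeping, is routine. The whole argument parallels the direct Eulerian GRP construction for the special relativistic case, the novelty being that the lapse and metric coefficients $A,B$ enter implicitly through $\lambda_\pm$ and through the source $\vec H$ (hence through $s_-$) at the limiting state.
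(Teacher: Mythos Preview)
Your proposal is correct and follows essentially the same route as the paper: both derive the ODE for $\frac{D_-\psi_-}{Dt}(0,\beta)$ by equating the mixed second partials of $\psi_-$ in the characteristic coordinates, reduce at $\alpha=0$ via \eqref{eq:Ch-Der} and \eqref{eq:pt2t:al0}, solve with the integrating factor, and then combine the resulting value at $\beta_*$ with $\frac{D_+\psi_-}{Dt}=s_-$ and the chain-rule expansion of $(\partial_t\psi_-)_*$ to read off $a_L,b_L,d_L$. The only difference is the order of presentation (the paper first isolates $(\partial_t\psi_-)_*$ and then computes $\frac{D_-\psi_-}{Dt}(0,\beta_*)$, while you reverse these), which is immaterial.
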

\begin{proof}
Using \eqref{eq:RIN} and \eqref{eq:Diff_RIN} gives
\begin{align*}
 \frac{{\lambda _ +  }}{{\lambda _ +   - \lambda _ -  }}\frac{{D_ -  \psi _ -  }}{{Dt}}
 &= \frac{{\lambda _ +  }}{{\lambda _ +   - \lambda _ -  }}\left( {\frac{{\partial \psi _ -  }}{{\partial t}} + \lambda _ -  \frac{{\partial \psi _ -  }}{{\partial r}}} \right) \\
 &= \frac{{\partial \psi _ -  }}{{\partial t}} + \frac{{\lambda _ -  }}{{\lambda _ +   - \lambda _ -  }}\left( {\frac{{\partial \psi _ -  }}{{\partial t}} + \lambda _ +  \frac{{\partial \psi _ -  }}{{\partial r}}} \right) \\
 & = \frac{1}{{1 - v^2 }}\frac{{\partial v}}{{\partial t}} + \frac{{c_s }}{{\rho  + p}}\frac{{\partial \rho }}{{\partial t}} + \frac{{\lambda _ -  s_ -  }}{{\lambda _ +   - \lambda _ -  }},
\end{align*}
which yields \eqref{eq:Rare} by transposition and setting $\al=0,~\beta=\beta_*$, with the coefficients $a_L,b_L$ given by \eqref{eq:aLbL} and
  \begin{equation}
    \label{eq:dL0}
    d_L  = \left( {\frac{{\lambda _ +  }}{{\lambda _ +   - \lambda _ -  }}} \right)_* \frac{{D_ -  \psi _ -  }}{{Dt}}(0,\beta _* )
    - \left( {\frac{{\lambda _ -  s_ -  }}{{\lambda _ +   - \lambda _ -  }}} \right)_*.
  \end{equation}
Thus the following task is to complete the calculation of $\frac{{D_ -  \psi _ -  }}{{Dt}}(0,\beta _* )$ in \eqref{eq:dL0}.
On the one hand, using the chain rule and \eqref{eq:pt2t-ab_DT} gives
\begin{equation}\label{eq:Ppsi_ab_1}
\frac{{\partial ^2 \psi _ -  }}{{\partial \alpha \partial \beta }} = \frac{\partial }{{\partial \beta }}\left( {\frac{{\partial t}}{{\partial \alpha }}\frac{{D_ -  \psi _ -  }}{{Dt}}} \right) = \frac{{\partial ^2 t}}{{\partial \beta \partial \alpha }}\frac{{D_ -  \psi _ -  }}{{Dt}} + \frac{{\partial t}}{{\partial \alpha }}\frac{\partial }{{\partial \beta }}\left( {\frac{{D_ -  \psi _ -  }}{{Dt}}} \right).
\end{equation}
On the other hand, utilizing \eqref{eq:pt2t-ab_DT} and \eqref{eq:Diff_RIN}, one has
\begin{equation}\label{eq:Ppsi_ab_2}
\frac{{\partial ^2 \psi _ -  }}{{\partial \alpha \partial \beta }} = \frac{\partial }{{\partial \alpha }}\left( {\frac{{\partial t}}{{\partial \beta }}\frac{{D_ +  \psi _ -  }}{{Dt}}} \right) = \frac{\partial }{{\partial \alpha }}\left( {\frac{{\partial t}}{{\partial \beta }}s_ -  } \right) = s_ -  \frac{{\partial ^2 t}}{{\partial \alpha \partial \beta }} + \frac{{\partial t}}{{\partial \beta }}\frac{{\partial s_ -  }}{{\partial \alpha }}.
\end{equation}
Combing \eqref{eq:Ppsi_ab_1} and \eqref{eq:Ppsi_ab_2}, and then setting $\al =0$ by making use of \eqref{eq:Ch-Der} may give
\[
\frac{{\partial t}}{{\partial \alpha }}(0,\beta )\frac{d }{{d \beta }}\left[ {\frac{{D_ -  \psi _ -  }}{{Dt}}(0,\beta )} \right] = \left( {s_ -  (0,\beta ) - \frac{{D_ -  \psi _ -  }}{{Dt}}(0,\beta )} \right)\frac{{\partial ^2 t}}{{\partial \beta \partial \alpha }}(0,\beta ),
\]
which further gives a ordinary differential equation at $\al=0$ for $\frac{D_ -  \psi _ -  }{Dt}(0,\beta ) $
\begin{equation}\label{eq:ODEpsi}
\dfr{d}{{d\beta }}\left[ {\frac{{D_ -  \psi _ -  }}{{Dt}}(0,\beta )} \right] =  - \dfr{1}{{\lambda _ +   - \lambda _ -  }}\dfr{{D_ -  \psi _ -  }}{{Dt}}(0,\beta ) + \dfr{{s_ -  }}{{\lambda _ +   - \lambda _ -  }},\quad \beta \in [\beta_L,\beta_*],
\end{equation}
by noting \eqref{eq:pt2t:al0}. Hence $\frac{D_ -  \psi _ -  }{Dt}(0,\beta )$ can be formulated by integrating \eqref{eq:ODEpsi} as
\begin{align}\nonumber
\dfr{D_ -  \psi _ -  }{Dt}(0,\beta )
= & \left( \dfr{D_ -  \psi _ -  }{Dt} \right)_L \exp \left( -\int_{\beta _L }^\beta  {\dfr{d\hat\beta }{\lambda _ + (0,\hat\beta)  - \lambda _ - (0,\hat\beta) }}  \right)\\[3mm]
\label{eq:Dpsi_beta}
&+
\int_{\beta _L }^\beta  \left( \frac{s_ -  }{\lambda _ +   - \lambda _ -  } \right)
 (0,\hat\beta )\exp \left( -\int_{\hat\beta }^\beta  \frac{d\omega }{\lambda _ +  (0,\omega ) - \lambda _ -  (0,\omega )} \right)d\hat\beta,
\end{align}
for all $\beta \in [\beta_L,\beta_*]$.
Setting $\beta=\beta_*$ in \eqref{eq:Dpsi_beta} and substituting it into \eqref{eq:dL0} may give the expression of $d_L$ in \eqref{eq:dL} and complete the proof. \qed
\end{proof}

\begin{remark}
If~$p(\rho)=\sigma^2 \rho$, one has $\lambda_-(0,\omega)=\sqrt{A_*B_*} \frac{ v(0,\omega) - \sigma }{1-v(0,\omega) \sigma}=\omega$, and
$$
v(0,\omega) = \frac{\sigma + \varpi } {1+\sigma \varpi },\quad \varpi = \frac{\omega}{\sqrt{A_*B_*}},
$$
For this case,  the integral $\int_{\hat\beta }^\beta  \frac{d\omega }{\lambda _ +  (0,\omega ) - \lambda _ -  (0,\omega )} $ in \eqref{eq:Dpsi_beta} can be expressed as
\begin{align*}
\int_{\hat\beta }^\beta  \frac{d\omega }{\lambda _ +  (0,\omega ) - \lambda _ -  (0,\omega )}
& = \frac{1}{2\sigma} \int_{\hat\beta/\sqrt{A_*B_*} }^{\beta/\sqrt{A_*B_*} }  \frac{ \sigma^2+1+2\sigma \varpi   }{1-\varpi ^2}  d \varpi
\\[2mm]
& = \frac{1}{4\sigma} \Big[ (\sigma-1)^2 \ln(1+\varpi) - (\sigma +1)^2 \ln(1-\varpi)  \Big]_{\varpi=\hat \beta/\sqrt{A_*B_*}}^{\varpi=\beta/\sqrt{A_*B_*} } .
\end{align*}
\end{remark}

\begin{remark} \label{rem:rightRare}
If the right rarefaction wave associated with the eigenvalue $\lambda_+$ appears in the GRP,
then the above derivation can be used by the ``reflective symmetry'' transformation
\begin{equation}\label{eq:reflect}
 \rho (r,t) = \tilde \rho ( - r,t), \ \
 v(r,t) =  - \tilde v( - r,t), \ \
 p(r,t) = \tilde p( - r,t),
\end{equation}
where $(\rho ,v,p)^T$ and $(\tilde \rho ,\tilde v,\tilde p)^T$ denote the primitive variables before and after the reflective transformation, respectively.
Specially, the ``reflective symmetry'' transformation is first used to
transfer the ``real'' right rarefaction wave into a  ``virtual''  left rarefaction wave,
 Theorem  \ref{thm:001} is then directly applied to the ``virtual''  left rarefaction wave, and finally using inverse transformation gives the linear equation of $\big(\frac{\pt \rho}{\pt t}\big)_*$ and $\big(\frac{\pt v}{\pt t}\big)_*$ for the right rarefaction wave.

\end{remark}

\subsubsection{Resolution of the shock wave}
\label{sec:resoluShock}
This section resolves  the right shock wave for the
GRP \eqref{eq:RHD_1} and \eqref{eq:InitialData}  in Figure \ref{fig:wave-pattern-a}
and  gives the second equation in
\eqref{eq:AlgebricEqns} through differentiating the shock relation along the shock trajectory.

Let $r=r_s(t)$ be the shock trajectory which is associated with the
$\lambda_+$--characteristic field, and assume that it propagates with the
speed $s:= r'_s(t)>0$ to the right, see Figure
\ref{fig:wave-pattern-a}. Denote the left and right states of the
shock wave  by $\vec U(t)$ and $\overline{\vec U}(t)$,
respectively, i.e. $\vec U(t)=\vec U\big(t,r_s(t) - 0\big)$ and
$\overline{\vec U}(t)=\vec U\big(t,r_s(t)+0\big)$.
The Rankine-Hugoniot relation across the shock wave is
\begin{equation}\label{eq:RHrelation}
\left[ { \sqrt {AB} \vec F(\vec U)}
 \right] = s\left[{ \vec U}
 \right],
\end{equation}
or equivalently,
\begin{equation}\label{eq:RHrelation2}
\left[ {  \vec F(\vec U)}
 \right] = (s/\sqrt {AB}) \left[ { \vec U}
\right],
\end{equation}
where $\left[ {\cdot} \right]$ denotes the jump across the shock wave, and the continuity of $\sqrt {AB}$ has been used. Utilizing this relation gives
\begin{equation}
  \label{eq:u-Phi}
   \dfr{ v -\bar v}{1 - v \bar v}
  =\Phi(\rho,\bar \rho),
\end{equation}
where
\begin{equation}
  \label{eq:Phi}
  \Phi(\rho,\bar \rho):= \sqrt{ \dfr{ (p-\bar p)(\rho - \bar \rho)}{(\rho + \bar p)(\bar \rho + p)  }  },
\end{equation}
see \cite{Landau1987} for the detailed derivation.
Thus along the shock trajectory, one always has
\begin{equation}
  \label{eq:DShockRelation}
  \dfr{D_{s}}{D t}
  \Big(\dfr{v -\bar v}{1 - v \bar v}\Big) =
  \dfr{D_{s}}{D t} \Phi(\rho,\bar \rho),
\end{equation}
where $\frac{D_{s} }{D t} := \frac{\pt }{\pt t} + s\frac{\pt}{\pt x}$ denotes the directional derivative along the shock trajectory.

The main result in this subsection is given as follows.

\begin{thm}\label{thm:002}
The limiting values of $\left(\frac{\pt \rho}{\pt t}\right)_*$ and
  $\left(\frac{\pt v}{\pt t}\right)_*$ satisfy
  \begin{equation}
    \label{eq:shock}
    a_R \left(\dfr{\pt \rho}{\pt t}\right)_* + b_R \left(\dfr{\pt v}{\pt t}\right)_* = d_R,
  \end{equation}
  where the expressions of the coefficients $a_R,b_R$, and $d_R$ will be given in the proof.
\end{thm}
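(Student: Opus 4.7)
The strategy is to obtain the second equation in the $2\times 2$ system \eqref{eq:AlgebricEqns} by differentiating the single scalar Rankine--Hugoniot relation \eqref{eq:u-Phi} along the right shock trajectory $r=r_s(t)$ and then reducing every resulting term to a linear combination of $(\partial_t \rho)_*$ and $(\partial_t v)_*$. Only one scalar identity is required because, for the barotropic equation of state $p=p(\rho)$, the shock speed has already been eliminated from the two R-H conditions \eqref{eq:RHrelation2}, leaving \eqref{eq:u-Phi} as a single intrinsic compatibility between the left state $(\rho,v)$ behind the shock and the right state $(\bar\rho,\bar v)$ ahead of it.

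First I apply $D_s/Dt$ to both sides of \eqref{eq:u-Phi} via the chain rule, obtaining
\begin{equation*}
f_v \dfr{D_s v}{Dt} + f_{\bar v}\dfr{D_s \bar v}{Dt} = \Phi_\rho \dfr{D_s \rho}{Dt} + \Phi_{\bar\rho}\dfr{D_s \bar\rho}{Dt},
\end{equation*}
with $f(v,\bar v):=(v-\bar v)/(1-v\bar v)$, $f_v = (1-\bar v^2)/(1-v\bar v)^2$, $f_{\bar v} = -(1-v^2)/(1-v\bar v)^2$, and $\Phi_\rho, \Phi_{\bar\rho}$ read off from \eqref{eq:Phi} together with the relation $p'(\rho)=c_s^2$. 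All coefficients are evaluated in the limit $t\to 0^+$, where the left state is $\vec U_*$ and the right state is $\vec U_R$, and the limiting shock speed $s_*$ is provided by the associated Riemann problem \eqref{eq:InitialData-RP}.

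Next, to convert the behind-shock directional derivatives into time derivatives I use the primitive-variable form \eqref{eq:RHD_pri}. Writing $D_s/Dt=\partial_t+s_*\partial_r$ and eliminating the spatial derivative via $\partial_r\vec V=\vec J_*^{-1}(\vec H_*-\partial_t\vec V)$ yields
\begin{equation*}
\dfr{D_s \vec V}{Dt}\bigg|_{*} = \bigl(\vec I - s_*\vec J_*^{-1}\bigr)(\partial_t\vec V)_* + s_*\vec J_*^{-1}\vec H_*.
\end{equation*}
For the ahead-of-shock derivatives, the solution immediately to the right of the shock is, to leading order as $t\to 0^+$, the linear extrapolation from \eqref{eq:InitialData}, so $\partial_r\bar{\vec V}=\vec V'_R:=(\partial\vec V/\partial\vec U)_R\vec U'_R$, and by \eqref{eq:RHD_pri} evaluated at $\vec U_R$,
\begin{equation*}
\dfr{D_s\bar{\vec V}}{Dt}\bigg|_{0^+} = \bigl(s_*\vec I - \vec J_R\bigr)\vec V'_R + \vec H_R.
\end{equation*}

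Substituting these two identities into the differentiated jump relation and collecting the coefficients of $(\partial_t\rho)_*$ and $(\partial_t v)_*$ produces an equation of exactly the form \eqref{eq:shock}. Concretely, $(a_R,b_R)$ is the row vector $(-\Phi_\rho,\,f_v)(\vec I-s_*\vec J_*^{-1})$ evaluated at $\vec U_*$, while
\[
d_R = (\Phi_{\bar\rho},\,-f_{\bar v})\bigl[(s_*\vec I-\vec J_R)\vec V'_R+\vec H_R\bigr] + (\Phi_\rho,\,-f_v)\,s_*\vec J_*^{-1}\vec H_*
\]
is built entirely out of quantities already known from the RP solution and the initial linear data. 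The case of a left shock is subsumed by invoking the reflective transformation \eqref{eq:reflect}, as in Remark \ref{rem:rightRare}. The main obstacle is not conceptual but algebraic: one must carefully track the $2\times2$ inverse $\vec J_*^{-1}$, the metric factor $\sqrt{A_*B_*}$, and the geometric/gravitational pieces of $\vec H$ (absent in the special relativistic GRP), and verify that $a_R,b_R,d_R$ remain regular in the weak-shock limit $\bar\rho\to\rho$, where they must merge continuously with the acoustic case discussed in Section \ref{sec:acoustic}.
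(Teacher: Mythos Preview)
Your proposal is correct and follows essentially the same route as the paper: differentiate the scalar Hugoniot relation \eqref{eq:u-Phi} along the shock, use \eqref{eq:RHD_pri} to convert $D_s/Dt$ on the star side into $\partial_t$, and evaluate the $R$-side directional derivatives from the initial linear data. The only cosmetic difference is that the paper multiplies through by the factor $(v^2-c_s^2)_*$ (proportional to $\det\vec J_*$) so that the adjugate of $\vec J_*$ appears instead of $\vec J_*^{-1}$, which yields the explicit closed-form coefficients \eqref{eq:aR}--\eqref{eq:bR}; your $(a_R,b_R,d_R)$ are simply those of the paper divided by $-(v^2-c_s^2)_*$.
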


\begin{proof}
Utilizing \eqref{eq:RHD_pri}, one has
\begin{align}\nonumber
&
\left( {\begin{array}{*{20}c}
   {v^2  - c_s^2  - \frac{{sv(1 - c_s^2 )}}{{\sqrt {AB} }}} & {\frac{{s(\rho  + p)}}{{\sqrt {AB} }}}  \\
   {\frac{{s(1 - v^2 )^2 c_s^2 }}{{\sqrt {AB} (\rho  + p)}}} & {v^2  - c_s^2  - \frac{{sv(1 - c_s^2 )}}{{\sqrt {AB} }}}
\end{array}} \right)\frac{{\partial \vec V}}{{\partial t}} \\[2mm] \nonumber
&
 = \left( {v^2  - c_s^2 } \right)\frac{{\partial \vec V}}{{\partial t}} - \frac{s}{{\sqrt {AB} }}\left( {\begin{array}{*{20}c}
   {v(1 - c_s^2 )} & { - (\rho  + p)}  \\
   { - \frac{{(1 - v^2 )^2 c_s^2 }}{{(\rho  + p)}}} & {v(1 - c_s^2 )}
\end{array}} \right)\frac{{\partial \vec V}}{{\partial t}} \\[2mm] \nonumber
&
 = \left( {v^2  - c_s^2 } \right)\frac{{\partial \vec V}}{{\partial t}} - \frac{s}{{\sqrt {AB} }}\left( {\begin{array}{*{20}c}
   {v(1 - c_s^2 )} & { - (\rho  + p)}  \\
   { - \frac{{(1 - v^2 )^2 c_s^2 }}{{(\rho  + p)}}} & {v(1 - c_s^2 )}
\end{array}} \right)\left( {\vec H - \vec  J\frac{{\partial \vec V}}{{\partial r}}} \right) \\[2mm] \nonumber
&
 = \left( {v^2  - c_s^2 } \right)\left( {\frac{{\partial \vec V}}{{\partial t}} + s\frac{{\partial \vec V}}{{\partial r}}} \right) - \frac{s}{{\sqrt {AB} }}\left( \begin{array}{l}
 v(1 - c_s^2 )H_1  - (\rho  + p)H_2  \\
  - \frac{{(1 - v^2 )^2 c_s^2 }}{{(\rho  + p)}}H_1  + v(1 - c_s^2 )H_2
 \end{array} \right) \\ \nonumber
&
 = \left( {v^2  - c_s^2 } \right)\frac{{D_s \vec V}}{{Dt}} + \frac{s}{r}\left( \begin{array}{l}
 (\rho  + p)\left( {2v^2  - \frac{{(1 - A)(1 - v^2 )}}{{2A}} - \frac{{\kappa r^2 (p + \rho v^2 )}}{{2A}}} \right) \\
 v(v^2  - 1)\left( {2c_s^2  - \frac{{(1 - A)(1 - c_s^2 )}}{{2A}} - \frac{{\kappa r^2 (p + \rho c_s^2 )}}{{2A}}} \right) \\
 \end{array} \right)
\\[2mm] \label{eq:DsANDPt}
&
 =: \left( {v^2  - c_s^2 } \right)\frac{{D_s \vec V}}{{Dt}} + (\Pi _1, \Pi _2)^{\rm T},
\end{align}
with which a relation between $\frac{\pt} {\pt t}$ and $\frac{D_s} {Dt}$ is established. The following will expand \eqref{eq:DShockRelation} by the chain rule and use the relation \eqref{eq:DsANDPt} to transfer the derivatives along the shock trajectory to the time derivatives for the left state of the shock wave.  The left and right hand sides of \eqref{eq:DShockRelation} will be treated separately.

The left hand side of \eqref{eq:DShockRelation} can be rewritten as
\begin{align*}
\dfr{D_{s}}{D t}
  \Big(\dfr{v -\bar v}{1 - v \bar v}\Big)= \dfr{1-\bar v^2}{(1-v \bar v)^2}\dfr{D_{s} v}{D t} -  \dfr{1- v^2}{(1-v \bar v)^2}\dfr{D_{s} \bar v}{D t} .
\end{align*}
If considering $p$ as a function of $\rho$, i.e. $p=p(\rho)$, then the  right hand side of \eqref{eq:DShockRelation} may be expanded as
\[
\frac{{D_s }}{{Dt}}\Phi (\rho ,\bar \rho ) = \Phi _\rho (\rho,\bar \rho)  \frac{{D_s \rho }}{{Dt}} + \Phi _{\bar \rho } (\rho,\bar \rho) \frac{{D_s \bar \rho }}{{Dt}},
\]
where
\begin{align*}
 \Phi _\rho  (\rho,\bar \rho)  &= \frac{{(\bar \rho  + \bar p)\left( {c_s^2 \Delta_{\rho,\bar \rho}^{-1} (\rho  + \bar p) + \Delta_{\rho,\bar \rho}  (\bar \rho  + p)} \right)}}{{
 2 \left( {(\rho  + \bar p)(\bar \rho  + p)} \right)^{\frac{3}{2}}  }}, \\
 \Phi _{\bar \rho } (\rho,\bar \rho) &= - \frac{{(\rho  + p)\left( {\bar c_s^2 \Delta_{\rho,\bar \rho}^{-1} (\bar \rho  + p) +  \Delta_{\rho,\bar \rho} (\rho  + \bar p)} \right)}}{{
 2 \left( {(\rho  + \bar p)(\bar \rho  + p)} \right)^{\frac{3}{2}} }},
\end{align*}
with $\Delta_{\rho,\bar \rho} := \sqrt{ (p-\bar p)/(\rho - \bar \rho)}$.
Therefore, \eqref{eq:DShockRelation} is equivalent to
\begin{equation}\label{eq:DsShockRelation2}
\Phi _\rho (\rho,\bar \rho) \frac{{D_s \rho }}{{Dt}} +
 \dfr{\bar v^2-1}{(1-v \bar v)^2}\dfr{D_{s} v}{D t}
=  \dfr{v^2-1}{(1-v \bar v)^2}\dfr{D_{s} \bar v}{D t} - \Phi _{\bar \rho } (\rho,\bar \rho) \frac{{D_s \bar \rho }}{{Dt}}.
\end{equation}
Multiplying both sides of \eqref{eq:DsANDPt} with the row  vector $\left(\Phi _\rho (\rho,\bar \rho) , \frac{\bar v^2-1}{(1-v \bar v)^2} \right),$
and then substituting \eqref{eq:DsShockRelation2} into it, we have
\begin{align}\nonumber
&
\bigg(\Phi _\rho (\rho,\bar \rho), \dfr{\bar v^2-1}{(1-v \bar v)^2} \bigg)
\left( {\begin{array}{*{20}c}
   {v^2  - c_s^2  - \frac{{sv(1 - c_s^2 )}}{{\sqrt {AB} }}} & {\frac{{s(\rho  + p)}}{{\sqrt {AB} }}}  \\
   {\frac{{s(1 - v^2 )^2 c_s^2 }}{{\sqrt {AB} (\rho  + p)}}} & {v^2  - c_s^2  - \frac{{sv(1 - c_s^2 )}}{{\sqrt {AB} }}}
\end{array}} \right)\frac{{\partial \vec V}}{{\partial t}} \\[2mm]
\nonumber
& = (v^2-c_s^2) \bigg( \Phi _\rho (\rho,\bar \rho) \frac{{D_s \rho }}{{Dt}} +
 \dfr{\bar v^2-1}{(1-v \bar v)^2}  \dfr{D_{s} v}{D t} \bigg) + \Phi _\rho (\rho,\bar \rho) \Pi _1 + \dfr{\bar v^2-1}{(1-v \bar v)^2} \Pi _2
\\[2mm]
\label{eq:prhopv}
& = (v^2-c_s^2)
\bigg(
\dfr{v^2-1}{(1-v \bar v)^2}\dfr{D_{s} \bar v}{D t} - \Phi _{\bar \rho } (\rho,\bar \rho) \frac{{D_s \bar \rho }}{{Dt}}
 \bigg) + \Phi _\rho (\rho,\bar \rho) \Pi _1 + \dfr{\bar v^2-1}{(1-v \bar v)^2} \Pi _2.
\end{align}
Setting $t \to 0^+$ in \eqref{eq:prhopv} gives \eqref{eq:shock} with
\begin{align} \label{eq:aR}
a_R &=
\Phi _\rho (\rho_*, \rho_R)  \left( {v^2  - c_s^2  - \frac{{sv(1 - c_s^2 )}}{{\sqrt {AB} }}} \right)_*
+ \dfr{v^2_R-1}{(1-v_*  v_R)^2}   \left ( {\frac{{s(1 - v^2 )^2 c_s^2 }}{{\sqrt {AB} (\rho  + p)}}}  \right)_*  ,
\\[3mm]
\label{eq:bR}
b_R &=
\Phi _\rho  (\rho_*, \rho_R)  {\frac{{s_*(\rho_*  + p_*)}}{{\sqrt {A_*B_*} }}}
+ \dfr{v^2_R-1}{(1-v_*  v_R)^2}   \left(   {v^2  - c_s^2  - \frac{{sv(1 - c_s^2 )}}{{\sqrt {AB} }}}  \right)_*,
\end{align}
and
\begin{align*}
d_R  = & \Phi _\rho (\rho_*,\rho_R) ( \Pi _1 )_* + \dfr{v^2_R-1}{(1-v_*  v_R)^2} (\Pi _2)_*
 \\[3mm]
 &
 +(v^2-c_s^2)_*
\left[
\dfr{ v^2_*-1}{(1-v_* v_R)^2} \left( \dfr{D_{s}  v}{D t} \bigg)_R- \Phi _{\bar \rho } (\rho_*,\rho_R)  \right(  \frac{{D_s  \rho }}{{Dt}} \bigg)_R
 \right],
\end{align*}
where $s_*$ denotes the initial value of the slope of the shock trajectory.
The proof is completed.
\qed
\end{proof}

\begin{remark}
Similarly, if the left shock wave associated with the eigenvalue $\lambda_-$ appears in the GRP,
then Theorem  \ref{thm:002} may  be applied to the ``virtual'' right shock wave obtained by using the ``reflective symmetry'' transformation \eqref{eq:reflect},
and  corresponding inverse transformation is finally used to give the linear equation of the limiting values $\big(\frac{\pt \rho}{\pt t}\big)_*$ and $\big(\frac{\pt v}{\pt t}\big)_*$.
\end{remark}

\subsubsection{Time derivatives of solutions at singularity point}
\label{sec:PuPt}

This subsection is devoted to derive the time derivatives $ \left( {\pt \vec U} /{\pt t} \right)_*$
and complete the calculation in \eqref{eq:TalyorGRP}.

\subsubsection*{I. Nonsonic case}

The discussion in this part is  only restricted to the case of the local wave
configuration  in Fig. \ref{fig:wave-pattern-a}, in which
 $\lambda_-(\vec U_*)<0$, that is to say,   the $t$-axis  locates
at the right hand side of the rarefaction wave.
Other (possible) nonsonic local wave configurations can be similarly discussed
and should be implemented in the code,
while the case of the transonic rarefaction wave will be discussed later.


\begin{thm}
  \label{thm:unique}
Under the assumption that $\lambda_-(\vec U_*)<0$ and $s_*>0$, the $2\times 2$ linear system formed by \eqref{eq:Rare} in Theorem \ref{thm:001} and \eqref{eq:shock} in Theorem \ref{thm:002}
has unique solution.
\end{thm}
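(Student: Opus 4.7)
My plan is to reduce the uniqueness claim to showing that the determinant $D := a_L b_R - b_L a_R$ of the coefficient matrix is strictly positive. The coefficients $a_L, b_L$ in \eqref{eq:aLbL} are manifestly positive, so all the difficulty lies in the four-term structure of $a_R$ and $b_R$ given by \eqref{eq:aR}--\eqref{eq:bR}.

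First I would substitute \eqref{eq:aR}--\eqref{eq:bR} into $D$ and group the expansion by the two factors $\Phi_\rho(\rho_*,\rho_R)$ and $k := (v_R^2-1)/(1-v_*v_R)^2$ that are common to $a_R$ and $b_R$. Writing
\[
\alpha_* := \left( v^2-c_s^2-\frac{sv(1-c_s^2)}{\sqrt{AB}} \right)_*,\quad
\gamma_* := \frac{s_*(\rho_*+p_*)}{\sqrt{A_*B_*}},\quad
\delta_* := \left( \frac{s(1-v^2)^2 c_s^2}{\sqrt{AB}(\rho+p)} \right)_*,
\]
the rearrangement produces
\[
D = \Phi_\rho(\rho_*,\rho_R)\bigl(a_L\gamma_* - b_L\alpha_*\bigr) + k\bigl(a_L\alpha_* - b_L\delta_*\bigr).
\]

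Next I would factor the two brackets. Using the identity
\[
(1-v_*^2)c_s^* + v_*(1-c_s^{*2}) = (v_*+c_s^*)(1-v_*c_s^*),
\]
together with $\sqrt{A_*B_*}(v_*-c_s^*) = \lambda_-(\vec U_*)(1-v_*c_s^*)$ from the eigenvalue formula, a direct but delicate computation should yield
\[
a_L\gamma_* - b_L\alpha_* = \frac{X}{1-v_*^2},\qquad
a_L\alpha_* - b_L\delta_* = -\frac{c_s^* X}{\rho_*+p_*},
\]
where
\[
X := \frac{(v_*+c_s^*)(1-v_*c_s^*)\bigl(s_* - \lambda_-(\vec U_*)\bigr)}{\sqrt{A_*B_*}}
\]
is a common factor. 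Substituting back gives the compact expression
\[
D = X\left(\frac{\Phi_\rho(\rho_*,\rho_R)}{1-v_*^2} - \frac{k\,c_s^*}{\rho_*+p_*}\right).
\]

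Finally I would verify that both factors of this product are strictly positive. The hypotheses $\lambda_-(\vec U_*)<0$ and $s_*>0$ give $s_*-\lambda_-(\vec U_*)>0$; hyperbolicity with $c_s^*<1$ and $|v_*|<1$ gives $1-v_*c_s^*>0$ and $1-v_*^2>0$; and the Lax entropy condition $\lambda_+(\vec U_*)>s_*>0$ across the admissible right shock, combined with $\lambda_+(\vec U_*)=\sqrt{A_*B_*}(v_*+c_s^*)/(1+v_*c_s^*)$ and the positivity of $1+v_*c_s^*$, forces $v_*+c_s^*>0$. Hence $X>0$. For the bracket, compressibility of the admissible shock gives $\rho_*>\rho_R$ and $p_*>p_R$, so $\Phi_\rho(\rho_*,\rho_R)>0$ directly from its defining formula; together with $k<0$ (from $|v_R|<1$), the bracket is strictly positive. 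Thus $D>0$ and the linear system is uniquely solvable. The main obstacle will be the middle step: the factorization through $X$ is not visible from the raw formulas \eqref{eq:aLbL}--\eqref{eq:bR} and hinges on spotting the trigonometric-type identity above and on trading $v_*\pm c_s^*$ for the eigenvalues $\lambda_\pm(\vec U_*)$; once the common factor is extracted, the sign analysis is immediate.
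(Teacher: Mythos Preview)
Your argument is correct: the grouping of $D$ by $\Phi_\rho$ and $k$, the factorisation through $X$, and the final sign analysis all check out (in particular, $\Phi_\rho(\rho_*,\rho_R)>0$ follows already from the positivity of every factor in its defining formula, so the appeal to compressibility is harmless but not strictly needed).

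Your route is genuinely different from the paper's. The paper does \emph{not} compute the determinant at all; instead it signs $a_R$ and $b_R$ separately. The key step there is the scalar inequality $\alpha_*:=\big(v^2-c_s^2-\tfrac{sv(1-c_s^2)}{\sqrt{AB}}\big)_*<0$, proved by a case split on the sign of $v_*$ using the Lax condition $\lambda_+(\vec U_*)>s_*$ together with $\lambda_-(\vec U_*)<0$. Once $\alpha_*<0$ is known, both summands of $a_R$ are negative and both summands of $b_R$ are positive, so $a_R<0$, $b_R>0$, and $a_Lb_R-a_Rb_L>0$ is immediate. What the paper buys is brevity and individual sign information on $a_R,b_R$; what you buy is an explicit closed formula for the determinant and a proof that avoids any case analysis on $v_*$, at the cost of a longer algebraic manipulation (the extraction of $X$). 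Both proofs ultimately hinge on the same two facts, $(v+c_s)_*>0$ from the Lax inequality and $s_*-\lambda_-(\vec U_*)>0$ from the hypotheses.
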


\begin{proof}
It only needs to check
\[
\det \left( {\begin{array}{*{20}c}
   {a_L } & {b_L }  \\
   {a_R } & {b_R }
\end{array}} \right) = a_L b_R  - a_R b_L  \ne 0,
\]
which is sufficiently  ensured if  $a_L b_L a_R b_R<0$.
It is obvious that $a_Lb_L>0$ due to \eqref{eq:aLbL}.
Thus if $a_Rb_R<0$, then the proof may be completed.
Before that, we first prove the following inequality
\begin{equation}\label{eq:ieq-s}
\left( {v^2  - c_s^2  - \frac{{sv(1 - c_s^2 )}}{{\sqrt {AB} }}} \right)_* < 0.
\end{equation}
The Lax entropy inequality for the right shock wave
\begin{equation}\label{eq:EntropyCon}
\left( \sqrt {AB} \dfr{v + c_s }{1 + vc_s } \right)_*  > s_*  > \left( \sqrt {AB} \dfr{v + c_s }{1 + vc_s } \right)_R,
\end{equation}
 implies $(v+c_s)_*>0$ due to $s_*>0$.
 If $v_* \le 0$, with the left inequality in \eqref{eq:EntropyCon}, one has
\begin{align*}
 \left( {\frac{{sv(1 - c_s^2 )}}{{\sqrt {AB} }}} \right)_*   \ge \left( {\frac{{(v + c_s)v(1-c_s^2) }}{{1 + vc_s }}} \right)_*   = \left(v^2  - c_s^2   +  {\frac{{c_s (v + c_s )(1 - v^2 )}}{{1 + vc_s }}} \right)_*   > (v^2  - c_s^2 )_* ,
\end{align*}
where $(v+c_s)_*>0$ has been used in the last inequality.
If $v_*>0$, then one has
\[
\left( {v^2  - c_s^2  - \frac{{sv(1 - c_s^2 )}}{{\sqrt {AB} }}} \right)_*  < (v^2  - c_s^2 )_*  = \lambda _ -  (\vec U_* )\left( {v + c_s } \right)_* \left( {\frac{{1 - vc_s }}{{\sqrt {AB} }}} \right)_*  < 0,
\]
where $\lambda_-(\vec U_*)<0$ and $(v+c_s)_*>0$ have been used in the last inequality. Hence the inequality \eqref{eq:ieq-s} holds for any $v_*$. The expressions of
$a_R$ in \eqref{eq:aR} and $b_R$ in \eqref{eq:bR}, and $v_R<1$ and $\Phi _\rho (\rho_*, \rho_R)>0$ imply  that $a_R<0$ and $b_R>0$. Therefore $a_L b_L a_R b_R<0$ and the proof is completed. \qed
\end{proof}


\subsubsection*{II. Sonic case}

Next, discuss the sonic case of that the $t$-axis is within the left rarefaction region in Fig. \ref{fig:wave-pattern-a}. In this case,
the $t$-axis is actually tangential to the $\lambda_-$-characteristic curve with zero initial slope, i.e. $\lambda_-(0,\beta_*)=0$ or $\beta_*=0$.
Then one has
$$
\left( \dfr{\pt \vec U}{\pt t} \right)_* = \dfr{D_- \vec U}{D t} (0,\beta_*),
$$
for $\beta_*=0$.
\begin{thm}
  \label{thm:sonic}
If the $t$-axis is (locally) located inside the left rarefaction wave,
then one has
\begin{align}\label{eq:sonic:rho}
\left( \dfr{\pt  \rho }{\pt t} \right)_* = \left( \dfr{\rho+p}{2c_s}\right)_* \left( \dfr{D_- \psi_-}{D t}(0,\beta_*) - ( s_ + )_* \right),
\\[3mm]
\label{eq:sonic:v}
\left( \dfr{\pt v}{\pt t} \right)_* =  \dfr{1-v_*^2}{2} \left( \dfr{D_- \psi_-}{D t}(0,\beta_*) + ( s_ + )_* \right),
\end{align}
with $\beta_*=0$, where $\frac{D_- \psi_-}{D t}(0,\beta_*)$ can be calculated by setting $\beta=\beta_*$ in \eqref{eq:Dpsi_beta}.
\end{thm}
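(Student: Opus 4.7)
The plan is to exploit the sonic condition $\lambda_-(0,\beta_*)=0$ to identify the Eulerian time derivative with the directional derivative along the $\lambda_-$-characteristic, and then combine the two differential Riemann-invariant relations in \eqref{eq:Diff_RIN} evaluated at the singular point to solve a $2\times 2$ linear system for $(\partial \rho/\partial t)_*$ and $(\partial v/\partial t)_*$.

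First I would observe that since the $t$-axis lies inside the left rarefaction fan, it coincides with a $\lambda_-$-characteristic issuing from $(0,r_0)$, and the sonic condition $\lambda_-(0,\beta_*)=0$ forces the operator $D_-/Dt=\partial_t+\lambda_-\partial_r$ to reduce to $\partial_t$ at the singular point. Hence
\[
\left(\dfr{\pt \rho}{\pt t}\right)_* = \dfr{D_-\rho}{Dt}(0,\beta_*), \qquad \left(\dfr{\pt v}{\pt t}\right)_* = \dfr{D_-v}{Dt}(0,\beta_*),
\]
so the task reduces to computing these two directional derivatives at $(0,\beta_*)$.

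Next I would write both relations in \eqref{eq:Diff_RIN} with the $D_-$ operator. By the definitions \eqref{eq:RIN}, the $D_-$-derivatives of $\psi_-$ and $\psi_+$ satisfy
\[
\dfr{D_-\psi_-}{Dt} = \dfr{1}{1-v^2}\dfr{D_-v}{Dt} + \dfr{c_s}{\rho+p}\dfr{D_-\rho}{Dt}, \qquad \dfr{D_-\psi_+}{Dt} = \dfr{1}{1-v^2}\dfr{D_-v}{Dt} - \dfr{c_s}{\rho+p}\dfr{D_-\rho}{Dt}.
\]
The key input from \eqref{eq:Diff_RIN} (with the roles of $\pm$ swapped) is that $D_-\psi_+/Dt=s_+$, so evaluating at $(0,\beta_*)$ yields the second relation with right-hand side $(s_+)_*$. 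For the first relation, $D_-\psi_-/Dt(0,\beta_*)$ is the quantity already produced by integrating the ODE \eqref{eq:ODEpsi}, i.e. by setting $\beta=\beta_*$ in \eqref{eq:Dpsi_beta}.

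Finally I would solve the resulting $2\times 2$ linear system for $(D_-\rho/Dt)_*$ and $(D_-v/Dt)_*$: adding the two equations eliminates the $\rho$-term and gives $\left(\dfr{D_-v}{Dt}\right)_* = \dfr{1-v_*^2}{2}\left(\dfr{D_-\psi_-}{Dt}(0,\beta_*)+(s_+)_*\right)$, while subtracting them eliminates the $v$-term and gives $\left(\dfr{D_-\rho}{Dt}\right)_* = \dfr{(\rho+p)_*}{2(c_s)_*}\left(\dfr{D_-\psi_-}{Dt}(0,\beta_*)-(s_+)_*\right)$, which upon identifying the left-hand sides with $(\partial_t\rho)_*$ and $(\partial_t v)_*$ produces \eqref{eq:sonic:rho}--\eqref{eq:sonic:v}. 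I do not expect any serious obstacle here; the main subtlety is simply being careful that the sonic condition legitimately turns $\partial_t$ into $D_-/Dt$ at the singular point (so the formulas make sense as limits along the $\lambda_-$-characteristic inside the fan) and that $D_-\psi_+/Dt=s_+$ may be applied pointwise at $(0,\beta_*)$ even though $\psi_+$ is not a constant of motion when source terms are present.
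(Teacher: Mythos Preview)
Your proposal is correct and follows essentially the same approach as the paper: both use the sonic condition $\lambda_-(0,\beta_*)=0$ to identify $\partial_t$ with $D_-/Dt$ at the singular point, then combine the chain-rule expansion of $D_-\psi_-/Dt$ with the relation $D_-\psi_+/Dt=s_+$ from \eqref{eq:Diff_RIN} to obtain a $2\times 2$ linear system whose solution yields \eqref{eq:sonic:rho}--\eqref{eq:sonic:v}. The only cosmetic difference is that the paper writes the two equations directly in terms of $(\partial_t\rho)_*,(\partial_t v)_*$ from the outset, whereas you first phrase them in terms of $D_-\rho/Dt,\,D_-v/Dt$ and then invoke the identification.
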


\begin{proof}
From \eqref{eq:Diff_RIN},  for $\al=0,\beta_*=0$, one has
\begin{equation}\label{eq:sonic:puptEq1}
\left( \frac{1}{{1 - v^2 }}
 \frac{{\pt   v}}{{\pt t}} - \frac{{c_s }}{{\rho  + p}}\frac{{\pt  \rho }}{{\pt t}} \right)_* =
\left( \frac{1}{{1 - v^2 }}\frac{{D_ -  v}}{{Dt}} - \frac{{c_s }}{{\rho  + p}}\frac{{D_ -  \rho }}{{Dt}} \right)_*= ( s_ + )_*.
\end{equation}
On the other hand, the expression of $\psi_-$ in \eqref{eq:RIN} yields
\begin{equation}\label{eq:sonic:puptEq2}
\left( \frac{1}{{1 - v^2 }}\frac{{\pt   v}}{{\pt t}} + \frac{{c_s }}{{\rho  + p}}\frac{{\pt  \rho }}{{\pt t}} \right)_* =
\dfr{\pt \psi_-}{\pt t}(0,\beta_*) = \dfr{D_- \psi_-}{D t}(0,\beta_*) .
\end{equation}
Combining \eqref{eq:sonic:puptEq1} and \eqref{eq:sonic:puptEq2} may give \eqref{eq:sonic:rho} and \eqref{eq:sonic:v}, and complete the proof. \qed
\end{proof}

As soon as the limiting values of the time derivatives $(\pt \rho / \pt t)_*$ and $(\pt v / \pt t)_*$ are obtained, the limiting values of the time derivatives
for conservative variable vector $\vec U$ can be calculated by the chain rule as follows
\[
\left( \dfr{{\partial \vec U}}{{\partial t}} \right)_* = \left( \begin{array}{c}
 \left( {(1 + c_s^2 )W^2  - c_s^2 } \right)\frac{{\partial \rho }}{{\partial t}} + 2vW^4 (\rho  + p)\frac{{\partial v}}{{\partial t}} \\[1mm]
 (1 + c_s^2 )W^2 v\frac{{\partial \rho }}{{\partial t}} + W^4 (1 + v^2 )(\rho  + p)\frac{{\partial v}}{{\partial t}}
 \end{array} \right)_*.
\]

\subsubsection{Acoustic case}
\label{sec:acoustic}

This section  discusses  the acoustic case of the GRP \eqref{eq:RHD_1} and
\eqref{eq:InitialData}, i.e. $\vec U_L = \vec U_R$ and $\vec U_L' \neq
\vec U_R'$. In this case, $\vec U_L = \vec U_* = \vec U_R$ and only
linear waves emanate from the origin (0,0) so that the resolution of
the GRP becomes simpler than the general case discussed before, see
Theorem \ref{thm:acoustic}.
For the sake of simplicity,  the  subscripts $L$, $R$, and $*$
of the variables $\vec U$ or $\vec V$ etc. will be
omitted because $\vec U_L=\vec U_*=\vec U_R$.

\begin{thm}
\label{thm:acoustic}
If $\lambda_-< 0$ and $\lambda_+> 0$, then $(\pt \rho/ \pt t)_*$ and $(\pt v/\pt t)_*$ can be obtained by
\begin{align}
\label{eq:AcousticRho}
 \left( {\frac{{\partial \rho }}{{\partial t}}} \right)_*  &=  - \frac{1}{2}\left[ {\lambda _ +  \rho' _L  + \lambda _ -  \rho' _R  + \frac{{\rho  + p}}{{c_s (1 - v^2 )}}\left( {\lambda _ +  v'_L  - \lambda _ -  v'_R } \right)} \right] + H_1, \\[2mm]
 \label{eq:AcousticV}
 \left( {\frac{{\partial v }}{{\partial t}}} \right)_*  &=  - \frac{1}{2}\left[ {\lambda _ +  v' _L  + \lambda _ -  v' _R  + \frac{{c_s (1 - v^2)}}{{ \rho  + p}}\left( {\lambda _ +  \rho'_L  - \lambda _ -  \rho'_R } \right)} \right] + H_2.
\end{align}
\end{thm}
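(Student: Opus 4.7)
The plan is to exploit the characteristic form of the primitive system \eqref{eq:RHD_pri}, namely the Riemann invariant identities \eqref{eq:Diff_RIN}
\[
\frac{D_+ \psi_-}{Dt} = s_-, \qquad \frac{D_- \psi_+}{Dt} = s_+,
\]
evaluated in the limit $(t,r)\to (0^+, r_0)$. In the acoustic case $\vec U_L = \vec U_R = \vec U_*$, so the associated Riemann solution is trivially constant and the first-order corrections are carried purely by the two linear (acoustic) characteristics; the invariants $\psi_\pm$ are the natural scalar unknowns for them.

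The first step is to determine the one-sided spatial derivatives that survive in the limit. Since by assumption $\lambda_- < 0 < \lambda_+$, the $\lambda_+$-characteristic traced backward in time from a point $(t, r_0)$ arrives at $(0, r_0 - \lambda_+ t)$, which lies in the region $r < r_0$ where the initial slope is $\vec U'_L$. Symmetrically, the $\lambda_-$-characteristic traced backward reaches $(0, r_0 - \lambda_- t)$ with $r > r_0$, where the slope is $\vec U'_R$. Expanding $D_\pm = \partial_t + \lambda_\pm \partial_r$ and passing to the limit $t\to 0^+$ therefore yields the pair
\[
\left(\frac{\partial \psi_-}{\partial t}\right)_* + \lambda_+ \left(\frac{\partial \psi_-}{\partial r}\right)_L = (s_-)_*, \qquad \left(\frac{\partial \psi_+}{\partial t}\right)_* + \lambda_- \left(\frac{\partial \psi_+}{\partial r}\right)_R = (s_+)_*.
\]

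Next, I rewrite both equations in the primitive variables $(\rho,v)$. From \eqref{eq:RIN} one reads off the differential form $d\psi_\pm = (1-v^2)^{-1} dv \mp c_s(\rho+p)^{-1} d\rho$, which turns the two relations above into a $2\times 2$ linear system for the unknowns $(\partial_t \rho)_*$ and $(\partial_t v)_*$ with known right-hand side built from $(s_\pm)_*$, $\lambda_\pm$, and the one-sided slopes $\rho'_L, v'_L, \rho'_R, v'_R$. Because $\psi_\pm$ are precisely the characteristic variables that diagonalize the underlying coupling, adding the two equations eliminates $(\partial_t \rho)_*$ and subtracting them eliminates $(\partial_t v)_*$. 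Applying the identities $s_- + s_+ = 2H_2/(1-v^2)$ and $s_- - s_+ = 2 c_s H_1/(\rho+p)$, both immediate from \eqref{eq:Diff_RIN}, collapses the source contributions to $H_1$ and $H_2$, and clearing the prefactors $1-v^2$ and $(\rho+p)/c_s$ delivers \eqref{eq:AcousticRho} and \eqref{eq:AcousticV}.

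The main obstacle is the geometric step that selects the $L$-side slopes in the $\lambda_+$-relation and the $R$-side slopes in the $\lambda_-$-relation; once this upwind book-keeping is justified by the sign assumption $\lambda_- < 0 < \lambda_+$, the remainder reduces to linear algebra on a system that is diagonal in the characteristic coordinates, and no appeal to the nonlinear machinery of Sections \ref{sec:resoluRare}--\ref{sec:resoluShock} is required.
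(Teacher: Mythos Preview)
Your proposal is correct and reaches the stated identities, but it proceeds along a different path from the paper's own proof. The paper never touches the Riemann invariants in the acoustic case; instead it works entirely in the primitive variables. Concretely, it observes that since $\rho$ is continuous across the $\lambda_-$-characteristic, the \emph{tangential} derivative $(D_-\rho/Dt)_*$ can be evaluated from the left smooth region as $(\partial_t\rho)_L+\lambda_-\rho'_L$, and then it eliminates $(\partial_t\rho)_L$ by substituting the first row of \eqref{eq:RHD_pri}. The same is done for $(D_+\rho/Dt)_*$ from the right, and the two are recombined through $\partial_t=\frac{1}{\lambda_+-\lambda_-}(\lambda_+ D_- - \lambda_- D_+)$; identical manipulations are repeated for $v$.

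Your route instead uses $\psi_\pm$ and the transport identities \eqref{eq:Diff_RIN}, which already encode the PDE in diagonal form. The backward-characteristic argument you give is exactly the right justification for pairing $\lambda_+$ with the $L$-slopes and $\lambda_-$ with the $R$-slopes, and it produces a $2\times2$ system that is decoupled from the outset: adding and subtracting is trivial, and the division by $\lambda_+-\lambda_-$ that the paper needs never appears. So your argument is marginally cleaner and relies only on material already established in Section~\ref{sec:GoEq}. The paper's approach, on the other hand, stays in primitive variables throughout and avoids translating between $(\rho,v)$ and $(\psi_-,\psi_+)$, which keeps the computation self-contained and perhaps more transparent if one has not internalized the Riemann-invariant machinery. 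Either way the content is the same linearized upwind calculation; the difference is purely in the choice of coordinates.
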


\begin{proof}
Since the solution $\rho(t,r)$ is continuous across the $\lambda_-$ characteristic curves, the directional derivative along the trajectory $r'(t)=\lambda_-$ of
the variable $\rho$ satisfies
\begin{align}
\nonumber
 \left( {\frac{D_- \rho }{D t}} \right)_*
 &= \left( {\frac{{\partial \rho }}{{\partial t}}} \right)_L  + \lambda _ -  \left( {\frac{{\partial \rho }}{{\partial x}}} \right)_L  \\
\nonumber
 & =  - \frac{{\sqrt {AB} }}{{1 - v^2 c_s^2 }}\left[ {v(1 - c_s^2 )\left( {\frac{{\partial \rho }}{{\partial x}}} \right)_L  + (\rho  + p)\left( {\frac{{\partial v}}{{\partial x}}} \right)_L } \right] + H_1  + \lambda _ -  \left( {\frac{{\partial \rho }}{{\partial x}}} \right)_L  \\
\label{eq:leftrho}
 & =  - \frac{{\sqrt {AB} }}{{1 - v^2 c_s^2 }}\left[ {c_s (1 - v^2 )\left( {\frac{{\partial \rho }}{{\partial x}}} \right)_L  + (\rho  + p)\left( {\frac{{\partial v}}{{\partial x}}} \right)_L } \right] + H_1 ,
\end{align}
here the first equation in \eqref{eq:RHD_pri} has been used. Similarly, the directional derivative along the trajectory $r'(t)=\lambda_+$ of the variable $\rho$ may be calculated by
\begin{align}
\nonumber
 \left( {\frac{D_+ \rho }{D t}} \right)_*
 &= \left( {\frac{{\partial \rho }}{{\partial t}}} \right)_R  + \lambda _ +  \left( {\frac{{\partial \rho }}{{\partial x}}} \right)_R  \\
\nonumber
 & =  - \frac{{\sqrt {AB} }}{{1 - v^2 c_s^2 }}\left[ {v(1 - c_s^2 )\left( {\frac{{\partial \rho }}{{\partial x}}} \right)_R  + (\rho  + p)\left( {\frac{{\partial v}}{{\partial x}}} \right)_R } \right] + H_1  + \lambda _ +  \left( {\frac{{\partial \rho }}{{\partial x}}} \right)_R  \\
\label{eq:rightrho}
 & =  - \frac{{\sqrt {AB} }}{{1 - v^2 c_s^2 }}\left[ {c_s ( v^2-1 )\left( {\frac{{\partial \rho }}{{\partial x}}} \right)_R  + (\rho  + p)\left( {\frac{{\partial v}}{{\partial x}}} \right)_R } \right] + H_1 .
\end{align}
Substituting  \eqref{eq:leftrho} and \eqref{eq:rightrho} into
$$
 \left( {\frac{{\partial \rho }}{{\partial t}}} \right)_*  = \frac{1}{{\lambda _ +   - \lambda _ -  }}\left( {\lambda _ +  \frac{{D_ -  \rho }}{{Dt}} - \lambda _ -  \frac{{D_ +  \rho }}{{Dt}}} \right)_*
$$
may yield \eqref{eq:AcousticRho}. Since the solution $v(t,r)$ is continuous across the $\lambda_-$ characteristic curves, it holds by the second equation in \eqref{eq:RHD_pri} that
\begin{align}
\nonumber
 \left( {\frac{D_- v }{D t}} \right)_*
 &= \left( {\frac{{\partial v }}{{\partial t}}} \right)_L  + \lambda _ -  \left( {\frac{{\partial v }}{{\partial x}}} \right)_L  \\
\nonumber
 & =  - \frac{{\sqrt {AB} }}{{1 - v^2 c_s^2 }}\left[ { \frac{(1 - v^2 )^2 c_s^2 }{\rho  + p} \left( {\frac{{\partial \rho }}{{\partial x}}} \right)_L  + v(1 - c_s^2 ) \left( {\frac{{\partial v}}{{\partial x}}} \right)_L } \right] + H_2  + \lambda _ -  \left( {\frac{{\partial v }}{{\partial x}}} \right)_L  \\
\label{eq:leftv}
 & =  - \frac{{\sqrt {AB} }}{{1 - v^2 c_s^2 }}\left[ {   \frac{(1 - v^2 )^2 c_s^2 }{\rho  + p}   \left( {\frac{{\partial \rho }}{{\partial x}}} \right)_L  + c_s (1 - v^2 )\left( {\frac{{\partial v}}{{\partial x}}} \right)_L } \right] + H_2 ,
\end{align}
Similarly, the directional derivative along the trajectory $r'(t)=\lambda_+$ of the variable $v$ is given by
\begin{align}
\nonumber
 \left( {\frac{D_+ v }{D t}} \right)_*
 &= \left( {\frac{{\partial v }}{{\partial t}}} \right)_R  + \lambda _ +  \left( {\frac{{\partial v }}{{\partial x}}} \right)_R  \\
\nonumber
 & =  - \frac{{\sqrt {AB} }}{{1 - v^2 c_s^2 }}\left[ { \frac{(1 - v^2 )^2 c_s^2 }{\rho  + p} \left( {\frac{{\partial \rho }}{{\partial x}}} \right)_R  + v(1 - c_s^2 ) \left( {\frac{{\partial v}}{{\partial x}}} \right)_R } \right] + H_2  + \lambda _ -  \left( {\frac{{\partial v }}{{\partial x}}} \right)_R  \\
\label{eq:rightv}
 & =  - \frac{{\sqrt {AB} }}{{1 - v^2 c_s^2 }}\left[ {   \frac{(1 - v^2 )^2 c_s^2 }{\rho  + p}   \left( {\frac{{\partial \rho }}{{\partial x}}} \right)_R  + c_s (v^2-1 )\left( {\frac{{\partial v}}{{\partial x}}} \right)_R } \right] + H_2 ,
\end{align}
Substituting  \eqref{eq:leftv} and \eqref{eq:rightv} into
$$
 \left( {\frac{{\partial v }}{{\partial t}}} \right)_*  = \frac{1}{{\lambda _ +   - \lambda _ -  }}\left( {\lambda _ +  \frac{{D_ -  v }}{{Dt}} - \lambda _ -  \frac{{D_ +  v }}{{Dt}}} \right)_*
$$
can give \eqref{eq:AcousticV}.
The proof is completed. \qed
\end{proof}


\section{Numerical experiments}
\label{sec:experiments}

This section will solve several initial-boundary-value problems of the spherically symmetric general RHD equations \eqref{eq:RHD_1}--\eqref{eq:RHD_1B}
to verify the accuracy and the capability in resolving discontinuity  of the GRP scheme presented in the last section, in comparison with
the Godunov scheme, given in Appendix \ref{sec:AppendixA}, which is   little different from but simpler than the one presented in \cite{VoglerTemple2012}.
Unless specifically stated, all computations will be restricted to the equation of state 
\eqref{eq:EOS-thz02}
and the CFL number $C_{cfl}=0.45$ (resp. 0.9) for the GRP (resp. Godunov) scheme, where $\sigma$ is a positive constant less than 1. Moreover,
the parameter $\theta$ in \eqref{eq:limiter} is taken as $1.9$.

\begin{example}[Accretion onto a Schwarzschild black hole]\label{example:accretion}\rm

This test simulates the stationary solution of
the spherical accretion onto a Schwarzschild black hole of unit mass (i.e. $M=1$), 
where  the Schwarzschild spacetime is considered  with the line element
\eqref{eq:spacetime} and
$$
A(t,r)=B(t,r)=1-\frac{2}{r}.
$$
The Einstein coupling constant $\kappa$ is taken as 0 in the spherically symmetric
general RHD equations \eqref{eq:RHD_1}--\eqref{eq:RHD_1B} and the derivation of the GRP scheme, 
and the parameter $\sigma$ in \eqref{eq:EOS-thz02} 
is taken as 0.1.
Thus the analytic steady state solution  is given by
\begin{equation}
\rho (r) = \frac{{{D_0}(1 - {v^2})}}{{ - v{r^2} A(t,r)}},\quad v(r) =  - \sqrt {\varpi (r)} ,
\label{eq:examp01}
\end{equation}
where $\varpi (r)$ solves the nonlinear equation
\[(1 - \varpi ){\varpi ^{\frac{{{\sigma ^2}}}{{1 - {\sigma ^2}}}}} =
A(t,r){\left( {\frac{2}{r}} \right)^{\frac{{4{\sigma ^2}}}{{1 - {\sigma ^2}}}}}.\]

  \begin{figure}[!htbp]
    \centering
        \includegraphics[height=0.38\textwidth]{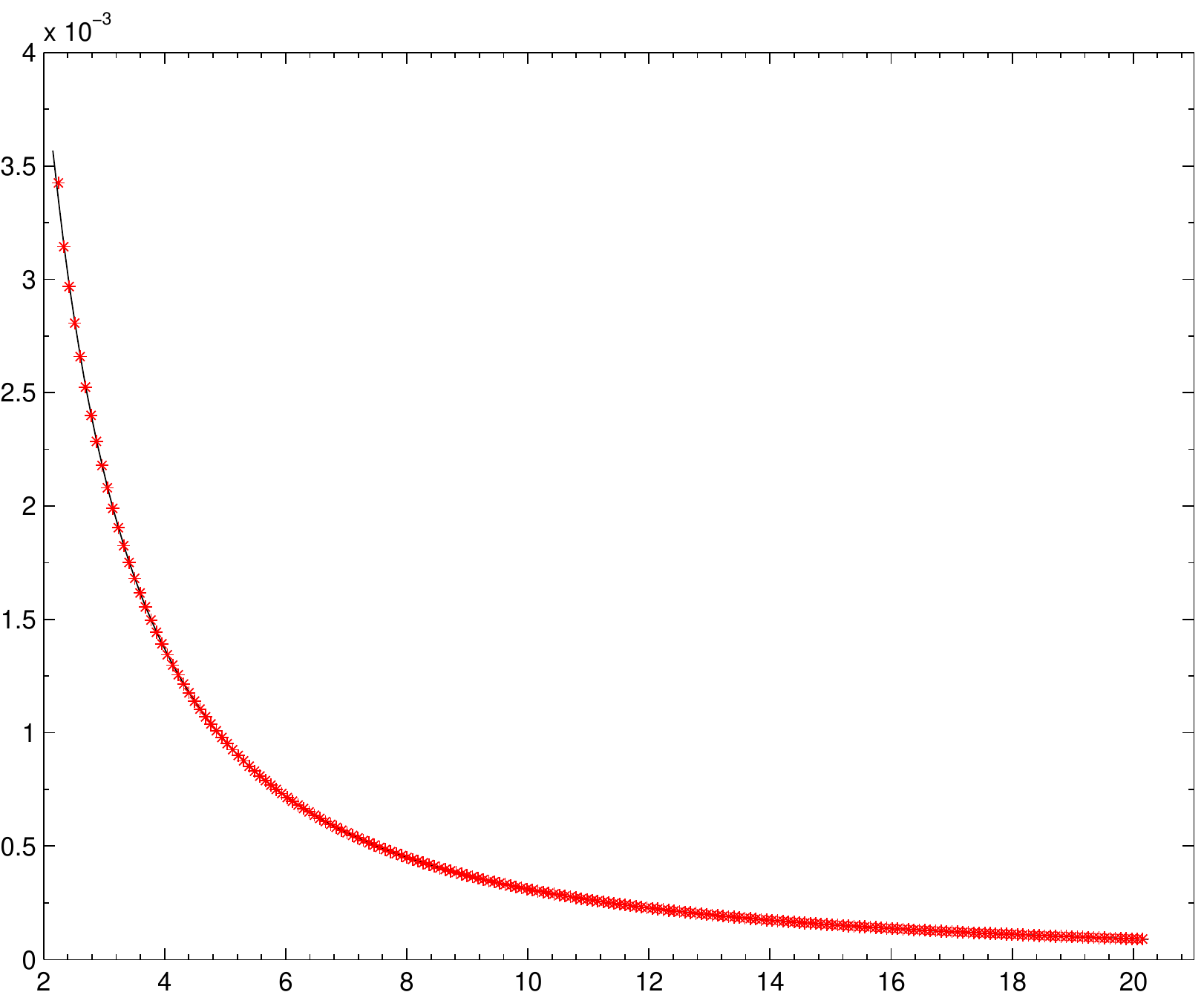}
        \includegraphics[height=0.38\textwidth]{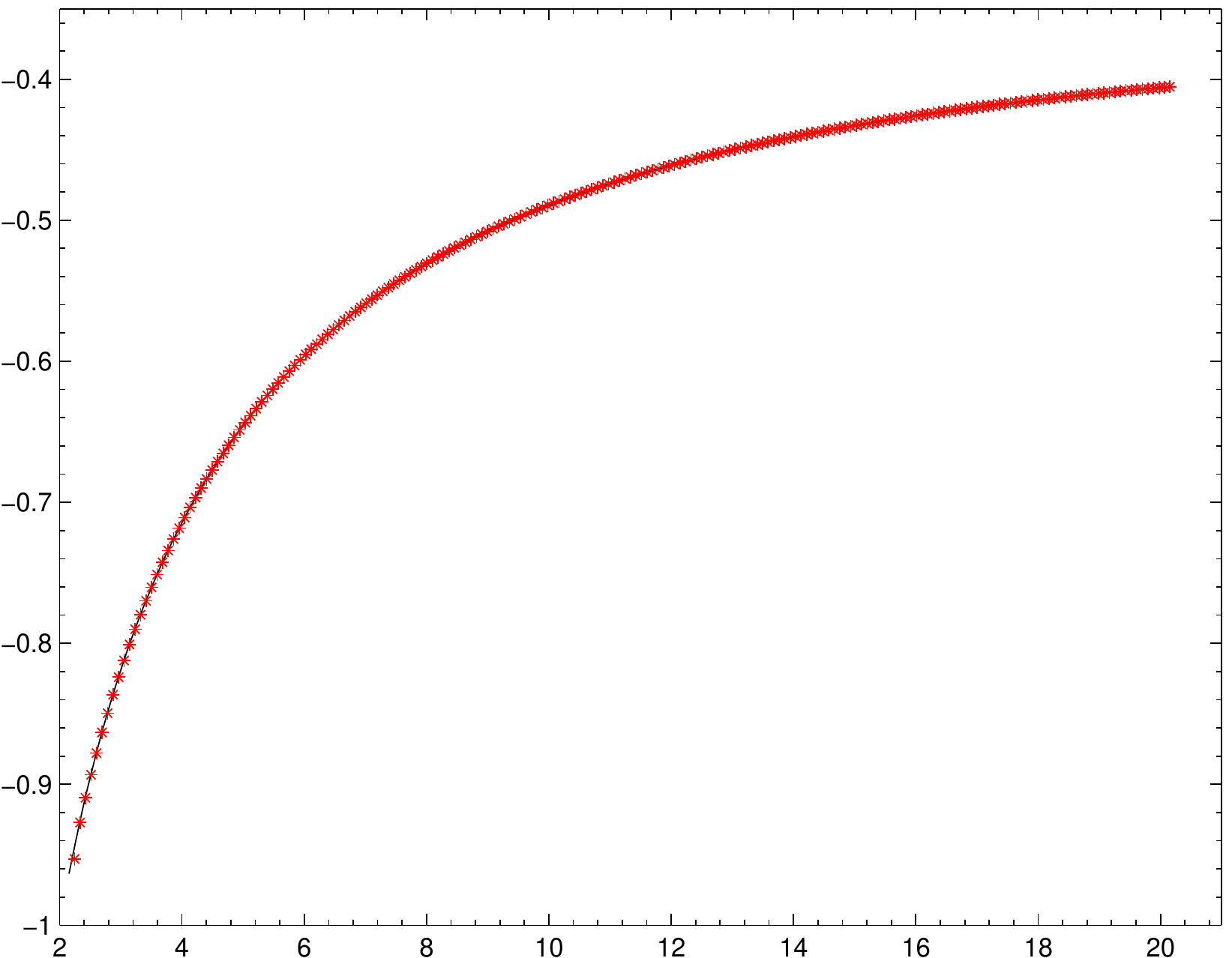}
    \caption{Example \ref{example:accretion}:
    The rest energy density $\rho$ (left) and velocity $v$ (right) at $t=160$.
    The numerical solutions obtained by the GRP scheme
are drawn in the symbol ``{\color{red}$\ast$}'',
while the solid lines stand for the exact steady solutions.}
    \label{fig:accretion}
  \end{figure}

    \begin{figure}[!htbp]
    \centering
        \includegraphics[height=0.4\textwidth]{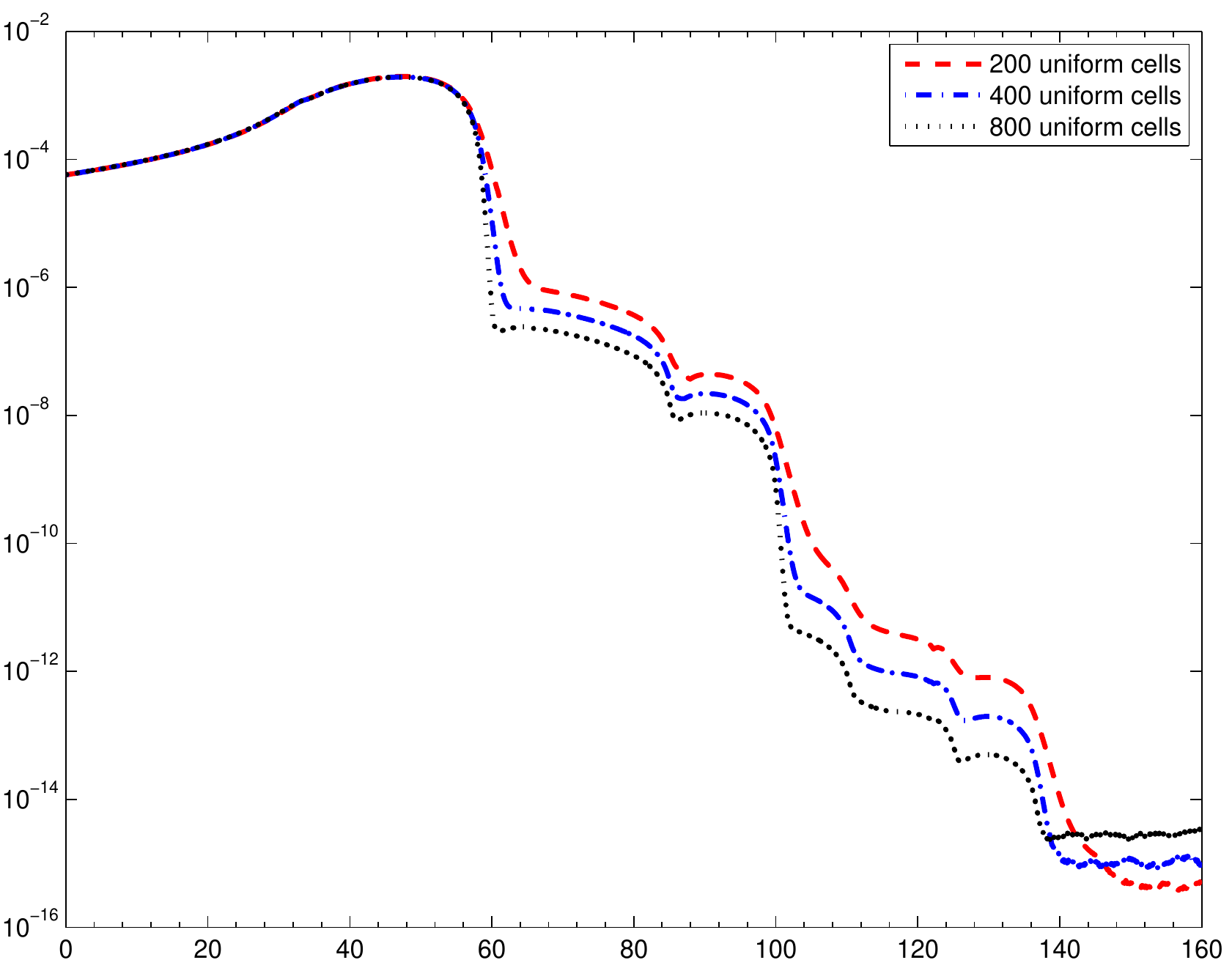}
    \caption{Example \ref{example:accretion}:
    Convergence history in the residuals with respect to the time $t$ on three uniform meshes. }
    \label{fig:accretion2}
  \end{figure}

  In the computations, the parameter $D_0$ in \eqref{eq:examp01} is taken as $1.6 \times 10^{-2}$,
  the initial density $\rho$ is zero everywhere and taken as
  a small number e.g. $10^{-8}$,  and the velocity $v=0$, except on the
  outer boundary where a gas is  injected continuously with  the exact flow variables in \eqref{eq:examp01}.
  Fig. \ref{fig:accretion} shows the numerical results at $t=160$ given by the GRP scheme with 200 uniform cells in the computational domain $[2.2,20.2]$. Outflow boundary conditions have been specified at the inner boundary $r=2.2$.
  It can be seen that the velocity approaches the speed of light when the gas approaches to the black hole, while the proposed GRP scheme exhibits good robustness.
  Fig. \ref{fig:accretion2} displays the convergence history in the residuals with respect to the time $t$ on three meshes of 200, 400 and 800 uniform cells respectively.
  It can be seen that the correct steady solutions are obtained by the GRP scheme with the residuals  less than $10^{-14}$.

\end{example}

Before simulating the shock wave models, we first consider several continuous models in fullly general relativistic case, which are two transformations of the Friedmann-Robertson-Walker (FRW) metrics (denoted by
FRW-1 and FRW-2 respectively) and the Tolmann-Oppengeimer-Volkoff (TOV) metric presented in \cite{Vogler2010}. The exact solutions to those continuous models
are smooth and may be used to test the accuracy of the proposed GRP scheme.

\begin{example}[FRW-1 model]\label{example:FRW1}\rm
Consider the conformally flat FRW metric, where the distance measure by the line element
\begin{equation}\label{eq:FRW}
ds^2  =  - d\tilde t^2  + R^2 (\tilde t)\left[ {d\tilde r^2  + \tilde r^2 \left( {d\theta ^2  + \sin ^2 \theta d\phi ^2 } \right)} \right],
\end{equation}
where $\tilde t$ is the time since the big bang, and the cosmological scale function
is defined by $R (\tilde t)=\sqrt{\tilde t}$. Under the  coordinate transformation \cite{TempleSmoller2009}
\[
t = \tilde t + \frac{{\tilde r^2 }}{4},\quad r = \tilde r\sqrt {\tilde t},
\]
Eq. \eqref{eq:FRW} goes over to \eqref{eq:spacetime} 
with the metric components
$$
A(t,r ) = 1 - v^2 ,\quad B(t,r ) = \frac{1}{{1 - v^2 }}.
$$
The exact fluid variables at $(t,r)$ are
\begin{equation}
\rho (t,r) = \frac{{16 v^2 }}{{ 3(1+\sigma^2)^2 \kappa r^2 }},\quad v(t,r) = \frac{{1 - \sqrt {1 - \xi ^2 } }}{\xi },
\label{eq:examp02}
\end{equation}
where $\kappa$ is Einstein's coupling constant, $\xi:=r/t$, and  $\sigma$  denotes
the parameter in \eqref{eq:EOS-thz02}.
This model is solved by using the proposed GRP scheme from $t=15$ to $16$
on several different uniform meshes for the spatial domain $[3,7]$.
The boundary conditions are specified at both ends by using the
exact solutions \eqref{eq:examp02} and $\sigma$  is taken as $1/\sqrt{3}$.
Table \ref{tab:FRW1} gives the numerical relative errors of $\rho,v,A$, and $B$ in $l^1$-norm  and  corresponding convergence rates. The results show that  the numerical convergence rates in $l^1$-norm  are almost {$(\Delta r)^2$}, in agreement with the theoretical.
\begin{table}[htbp]
  \centering
    \caption{\small Example \ref{example:FRW1}: Numerical errors in $l^1$-norm and corresponding convergence rates
      at $t=16$ for the GRP scheme. }
\begin{tabular}{|c||c|c||c|c||c|c||c|c|}
  \hline
\multirow{2}{8pt}{$N$}
 &\multicolumn{2}{c||}{$\rho$}&\multicolumn{2}{c||}{$v$} &\multicolumn{2}{c||}{$A$} &\multicolumn{2}{c|}{$B$}                    \\
 \cline{2-9}
 & error & order & error & order  & error &order &error & order \\
 \hline
25 &4.8775e-9& --     & 1.0383e-5   &--    &1.2692e-5& --    &9.2447e-6 &--\\
50&1.2695e-9& 1.94  & 2.7667e-6   & 1.91   &3.1843e-6& 1.99  &2.3011e-6 &2.01\\
100&3.2486e-10&1.97  &7.1233e-7    &1.96  &7.9744e-7& 2.00&5.7398e-7 &2.00\\
200&8.2267e-11&1.98  &1.8094e-7    &1.98 &1.9952e-7& 2.00&1.4334e-7 &2.00\\
400&2.0723e-11&1.99 &4.5522e-8  &1.99 &4.9895e-8& 2.00&3.5820e-8 &2.00\\
800&5.2016e-12&1.99&1.1409e-8  &2.00 &1.2476e-8& 2.00&8.9526e-9 &2.00\\
1600&1.3035e-12&2.00&2.8557e-9  &2.00 &3.1193e-9& 2.00&2.2379e-9 &2.00\\
\hline
\end{tabular}\label{tab:FRW1}
\end{table}

\end{example}

\begin{example}[FRW-2 model]\label{example:FRW2}\rm
The FRW metric  can also be transformed to the  standard Schwarzschild coordinates
under the coordinate transformation \cite{Vogler2010}
$$
t= \tilde r \sqrt{\tilde t}, \quad r = \frac{\Psi_0}{2} \sqrt{\frac{4 \tilde t^2+ \tilde t \tilde r^2}{\tilde t}},
$$
that is to say, the line element \eqref{eq:FRW} is
transformed to \eqref{eq:spacetime} with the metric components
\begin{equation}\label{eq:FRW2}
  A(t,r)=1-v^2,\quad B(t,r)=\frac{1}{\Psi (1-v^2) },
\end{equation}
where $\Psi(t,r) = \Psi_0 \sqrt{ \frac{\tilde t}{4 \tilde t^2+r^2} }$, and $\Psi_0$ is a positive constant and taken as 1 in the simulation.

The exact solutions in the fluid variables
corresponding to the above metric are
$$
\rho(t,r) = \frac{4}{3(1+\sigma^2)^2\kappa \tilde t^2}, \quad v(t,r) = \frac{r}{2 \tilde t},
$$
where
$$
\tilde t = \frac{ t^2+\sqrt{t^4-r^2\Psi_0^4} }{2 \Psi_0^2}.
$$
The spherically symmetric general RHD equations \eqref{eq:RHD_1}--\eqref{eq:RHD_1B} are solved by using the proposed GRP scheme from $t=15$ to $16$
on several different uniform meshes in the spatial interval $[3,7]$ with the boundary conditions specified by the
exact solutions. Table \ref{tab:FRW2} lists the numerical relative errors of $\rho,v,A$, and $B$ in $l^1$-norm  and  corresponding convergence rates.
 The results show that the numerical convergence rates of the GRP scheme
are almost {$(\Delta r)^2$}, which is the same as the theoretical, thus  the numerical method is of order 2.
\begin{table}[htbp]
  \centering
    \caption{\small Example \ref{example:FRW2}: Numerical errors in $l^1$-norm and corresponding convergence rates
      at $t=16$ for the GRP scheme.
  }
\begin{tabular}{|c||c|c||c|c||c|c||c|c|}
  \hline
\multirow{2}{8pt}{$N$}
 &\multicolumn{2}{c||}{$\rho$}&\multicolumn{2}{c||}{$v$} &\multicolumn{2}{c||}{$A$} &\multicolumn{2}{c|}{$B$}                    \\
 \cline{2-9}
 & error & order & error & order  & error &order &error & order \\
 \hline
25 &4.9541e-7& --     & 2.7875e-4   &--    &1.0705e-4& --    &4.9777e-5 &--\\
50&1.2027e-7& 2.04  & 6.7309e-5   & 2.05   &2.6922e-5& 1.99  &1.2251e-5 &2.02\\
100&2.9824e-8&2.01  &1.6539e-5    &2.02  &6.7379e-6& 2.00& 3.0235e-6 &2.02\\
200&7.4306e-9&2.00  &4.1037e-6    &2.01 &1.6857e-6& 2.00&7.5182e-7 &2.01\\
400&1.8551e-9&2.00 &1.0223e-6  &2.01 &4.2159e-7& 2.00& 1.8747e-7 &2.00\\
800&4.6353e-10&2.00&2.5514e-7  &2.00 &1.0541e-7& 2.00& 4.6800e-8 &2.00\\
1600&1.1584e-10&2.00&6.3738e-8  &2.00 &2.6357e-8& 2.00& 1.1694e-8 &2.00\\
\hline
\end{tabular}\label{tab:FRW2}
\end{table}

\end{example}

\begin{example}[TOV model]\label{example:TOV}\rm
The  general relativistic version of TOV model
describes the static singular isothermal spheres  \cite{SmollerTemple1993}.
Two components  of the TOV metrics
are
\begin{equation}\label{eq:TOV}
A(t,r)=1-8 \pi {\cal G} \gamma,
\quad
B(t,r)= B_0 r^{\frac{4 \sigma}{1+\sigma}},
\end{equation}
where the parameter $\gamma$ is  related to $\sigma$ in \eqref{eq:EOS-thz02} by
\begin{equation}\label{eq:DEFgamma}
\gamma = \frac{1}{2 \pi {\cal G}} \left( \dfr{\sigma^2}{1+6\sigma^2 +\sigma^4} \right).
\end{equation}
The exact solutions in the fluid variables are given by
\begin{equation}\label{eq:TOV-thz01}
\rho (t,r) = \frac{\gamma} {r^2}, \quad v(t,r)=0.
\end{equation}
In our computations,  $B_0$ and $\sigma$ are taken as 1 and $1/\sqrt{3}$, respectively.

\begin{table}[htbp]
  \centering
    \caption{\small Example \ref{example:TOV}: Numerical errors in $l^1$-norm and corresponding convergence rates
      at $t=16$ for the GRP scheme.
  }
\begin{tabular}{|c||c|c||c|c||c|c||c|c|}
  \hline
\multirow{2}{8pt}{$N$}
 &\multicolumn{2}{c||}{$\rho$}&\multicolumn{2}{c||}{$v$} &\multicolumn{2}{c||}{$A$} &\multicolumn{2}{c|}{$B$}                    \\
 \cline{2-9}
 & error & order & error & order  & error &order &error & order \\
 \hline
25 &4.4342e-7& --     & 6.5575e-4   &--    &2.5962e-5& --    &1.1524e-3 &--\\
50&1.1136e-7& 1.99  & 1.6311e-4   & 2.01   &6.2838e-6& 2.05  &2.6952e-4 &2.10\\
100&2.7877e-8&2.00  &4.0716e-5    &2.00  &1.5688e-6& 2.00& 6.5011e-5 &2.05\\
200&6.9757e-9&2.00  &1.0174e-5    &2.00 &3.9366e-7& 2.00&1.5952e-5 &2.03\\
400&1.7449e-9&2.00 &2.5427e-6  &2.00 &9.8718e-8& 2.00& 3.9504e-6 &2.01\\
800&4.3635e-10&2.00&6.3548e-7  &2.00 &2.4722e-8& 2.00& 9.8298e-7 &2.01\\
1600&1.0911e-10&2.00&1.5886e-7  &2.00 & 6.1869e-9& 2.00& 2.4515e-7 &2.00\\
\hline
\end{tabular}\label{tab:TOV}
\end{table}

Numerical experiments are conducted  by the proposed GRP scheme from the time $t=15$ to $16$
on the different uniform meshes in the interval $[3,7]$, where the boundary conditions are specified by using the
exact solutions \eqref{eq:TOV-thz01}.
Table \ref{tab:TOV} shows the numerical relative errors of $\rho,v,A$, and $B$ in $l^1$-norm  and   corresponding convergence rates. We see that the numerical convergence rates of the  proposed GRP scheme are almost {$(\Delta r)^2$}.
\end{example}

\begin{example}[Shock wave model]\label{example:FRW1-TOV}\rm
This test will simulate the general relativistic shock waves. The setup of
the problem is the same as that in \cite{VoglerTemple2012}.
The initial conditions at $t=t_0$ are as follows
\begin{equation}\label{eq:FRW1-TOV-rhov}
\big(\rho ({t_0},r), v({t_0},r)\big) =
\begin{cases}
\big(\frac{3  v^2 }{\kappa r^2 }, \frac{{1 - \sqrt {1 - {\xi ^2}} }}{\xi }\big), & r<r_0,\\
\big(\frac{\gamma}{r^2},0\big), & r>r_0,
\end{cases}
\end{equation}
and
\begin{align}\label{eq:FRW1-TOV-AB}
\begin{aligned}
A({t_0},r) =&
\begin{cases}
1-v^2, & r<r_0,\\
1-8\pi {\cal G} \gamma, & r>r_0,
\end{cases}\\
B ({t_0},r) =&
\begin{cases}
\frac{1}{1-v^2}, & r<r_0,\\
B_0 r^{ \frac{4\sigma^2}{1+\sigma^2} }, & r>r_0,
\end{cases}
\end{aligned}
\end{align}
which  match the FRW-1 and  TOV metrics if the left limiting value of the initial fluid velocity at the discontinuity $r=r_0$
 from the FRW-1 side is given by
$$ v_0 := v(t_0,r_0-0)=\sqrt{8\pi {\cal G}\gamma},$$
with the parameter $\gamma$ defined in \eqref{eq:DEFgamma},
and the starting time $t_0$ and the parameter $B_0$ are taken as
$$ t_0 = \frac{ r_0 (1+v_0^2) }{2 v_0}, \quad B_0 = r_0^{ -{4\sigma^2}/{1+\sigma^2} }/(1-v_0^2),$$
respectively.
In numerical computations,
the computational domain is chosen as $[3,7]$, the boundary
conditions are specified at $r=3$ and 7 by using the exact FRW-1 and TOV solutions
\eqref{eq:examp02} and \eqref{eq:TOV-thz01}, respectively,
and the initial discontinuity is located at $r_0=5$.

  \begin{figure}[!htbp]
    \centering
        \includegraphics[height=0.38\textwidth]{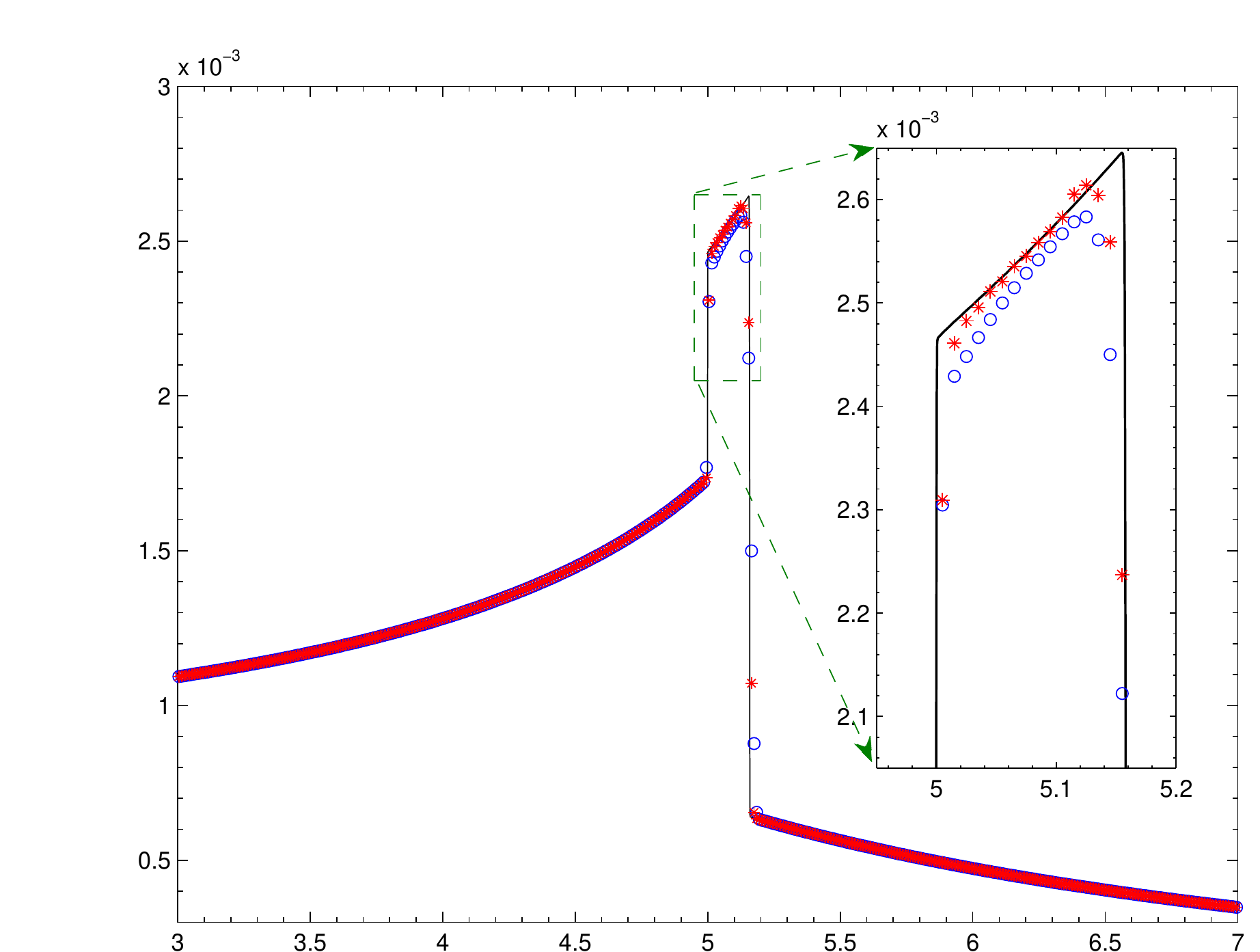}
        \includegraphics[height=0.38\textwidth]{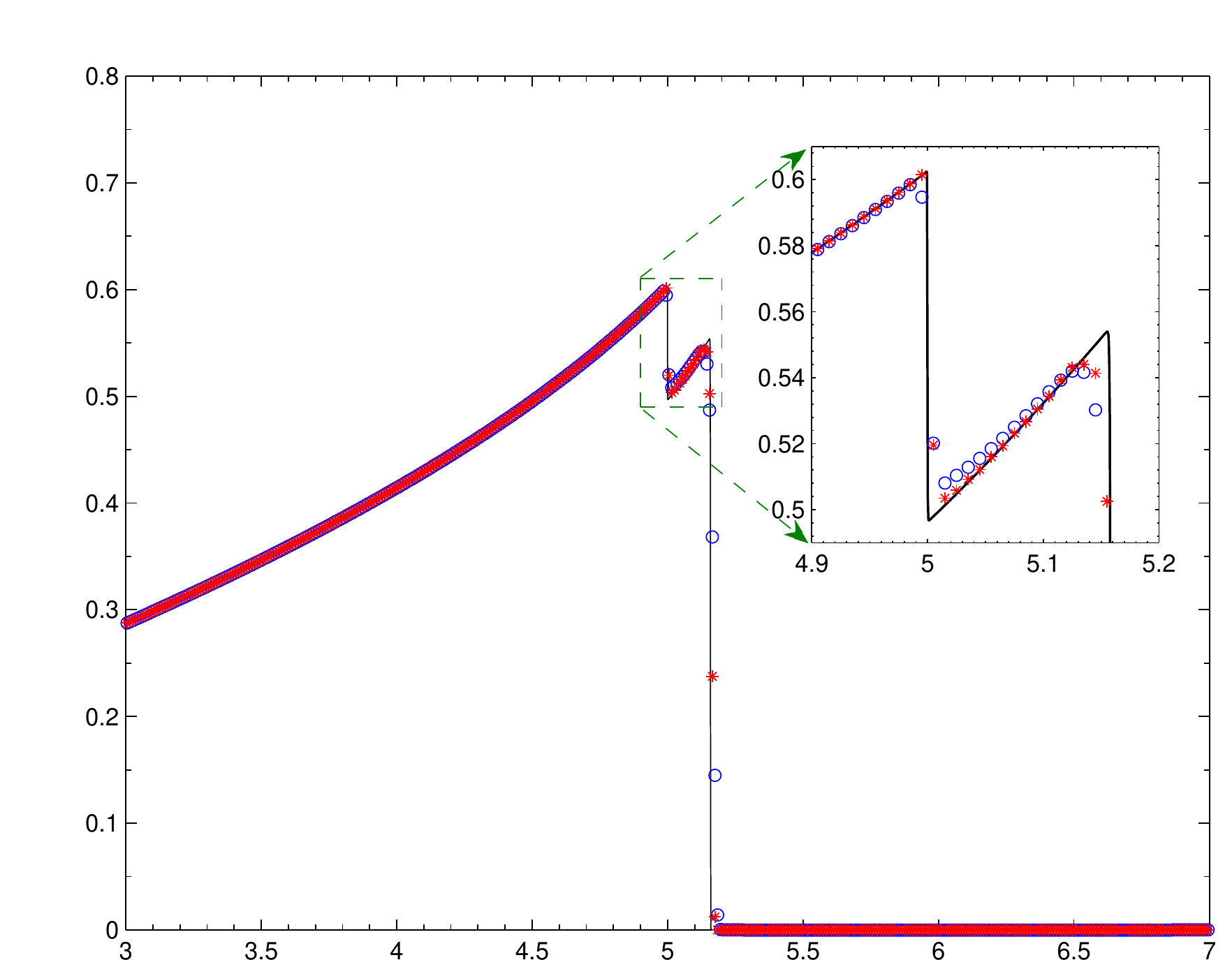}
    \caption{Example \ref{example:FRW1-TOV}:
    The rest energy density $\rho$ (left) and velocity $v$ (right) at $t=t_0+0.2$.
    The numerical solutions obtained by the   Godunov and
     GRP schemes
are drawn in the symbols  ``{\color{blue}$\circ$}'' and ``{\color{red}$\ast$}'', respectively,
while the solid lines stand for the reference solutions given by the Godunov scheme with a fine mesh of 10000 uniform cells.}
    \label{fig:shockwave_rhov0.2}
  \end{figure}
  \begin{figure}[!htbp]
    \centering
        \includegraphics[height=0.38\textwidth]{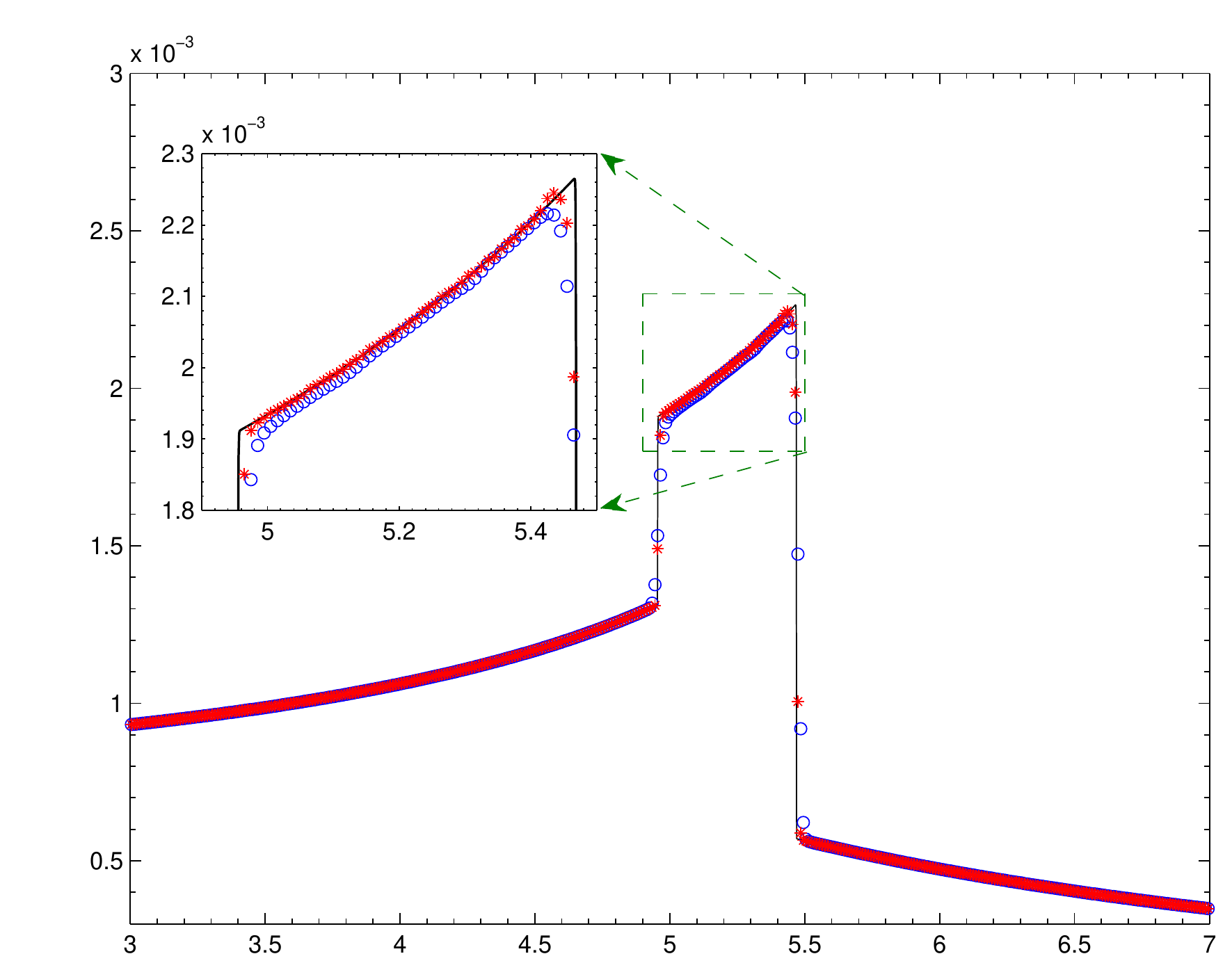}
        \includegraphics[height=0.38\textwidth]{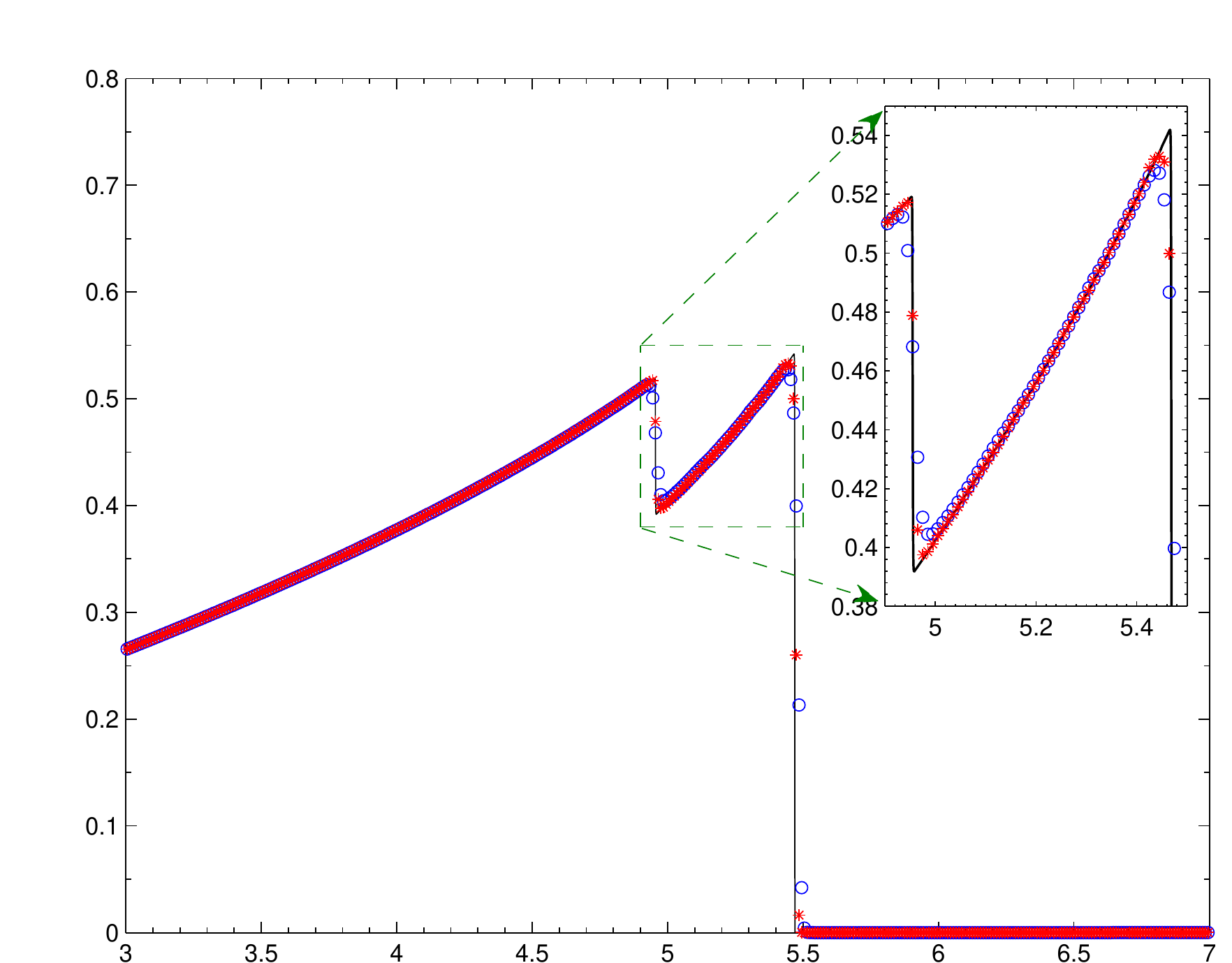}
    \caption{Same as Fig. \ref{fig:shockwave_rhov0.2}, except for the output time
    $t=t_0+0.6$.}
    \label{fig:shockwave_rhov0.6}
  \end{figure}

  \begin{figure}[!htbp]
    \centering
        \includegraphics[height=0.37\textwidth]{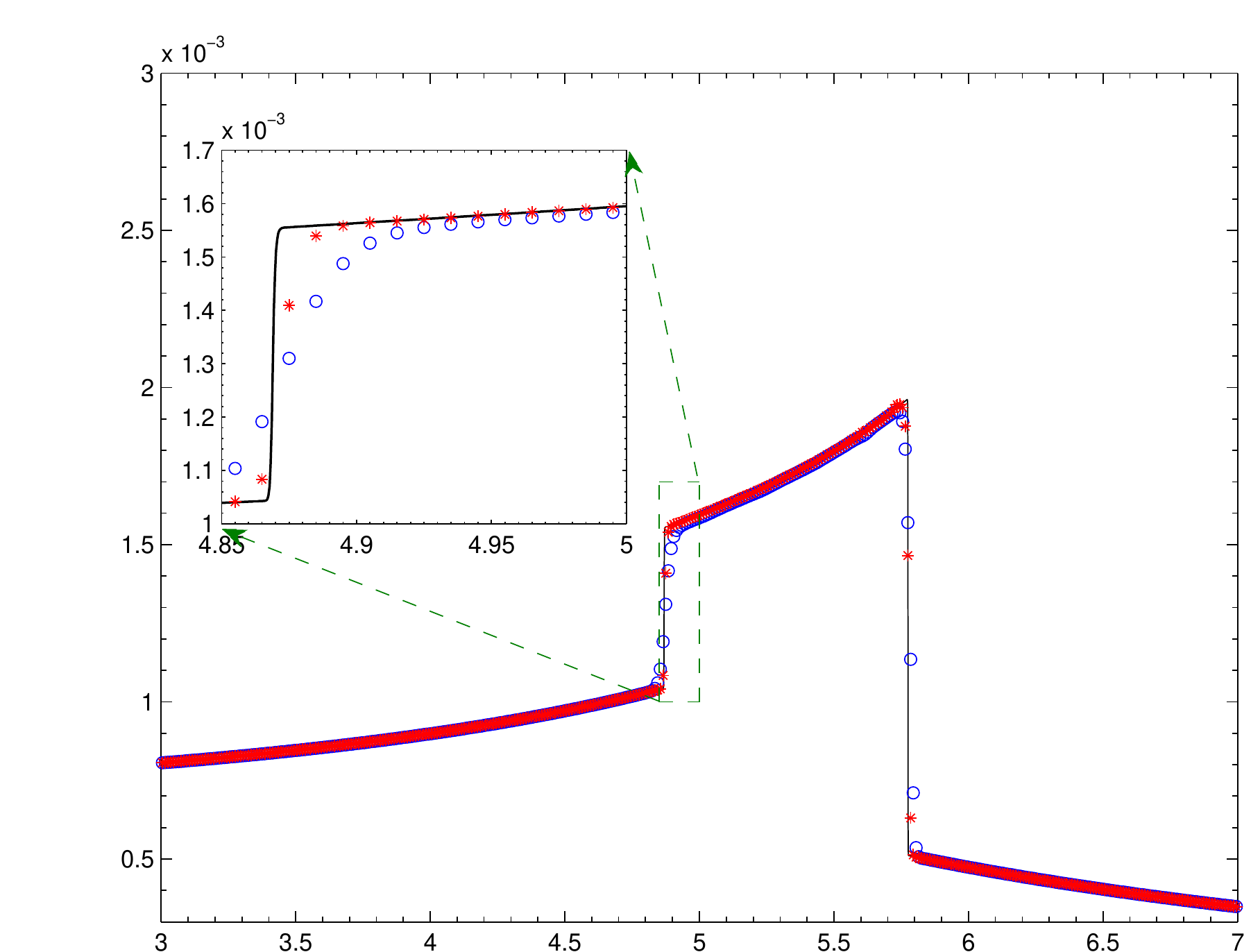}
        \includegraphics[height=0.37\textwidth]{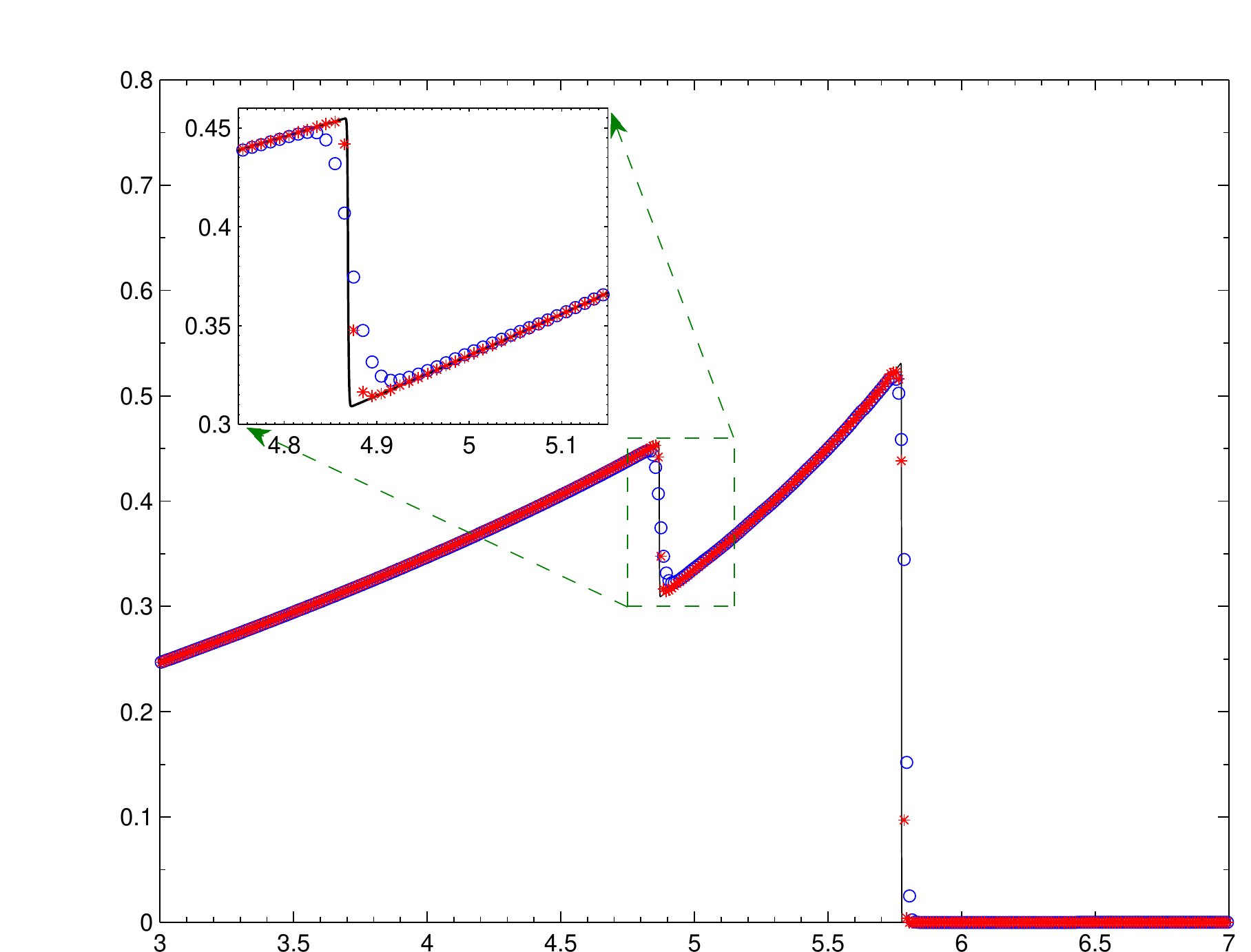}
    \caption{Same as Fig. \ref{fig:shockwave_rhov0.2}, except for $t=t_0+1$.}
    \label{fig:shockwave_rhov1.0}
  \end{figure}

  \begin{figure}[!htbp]
    \centering
        \includegraphics[height=0.37\textwidth]{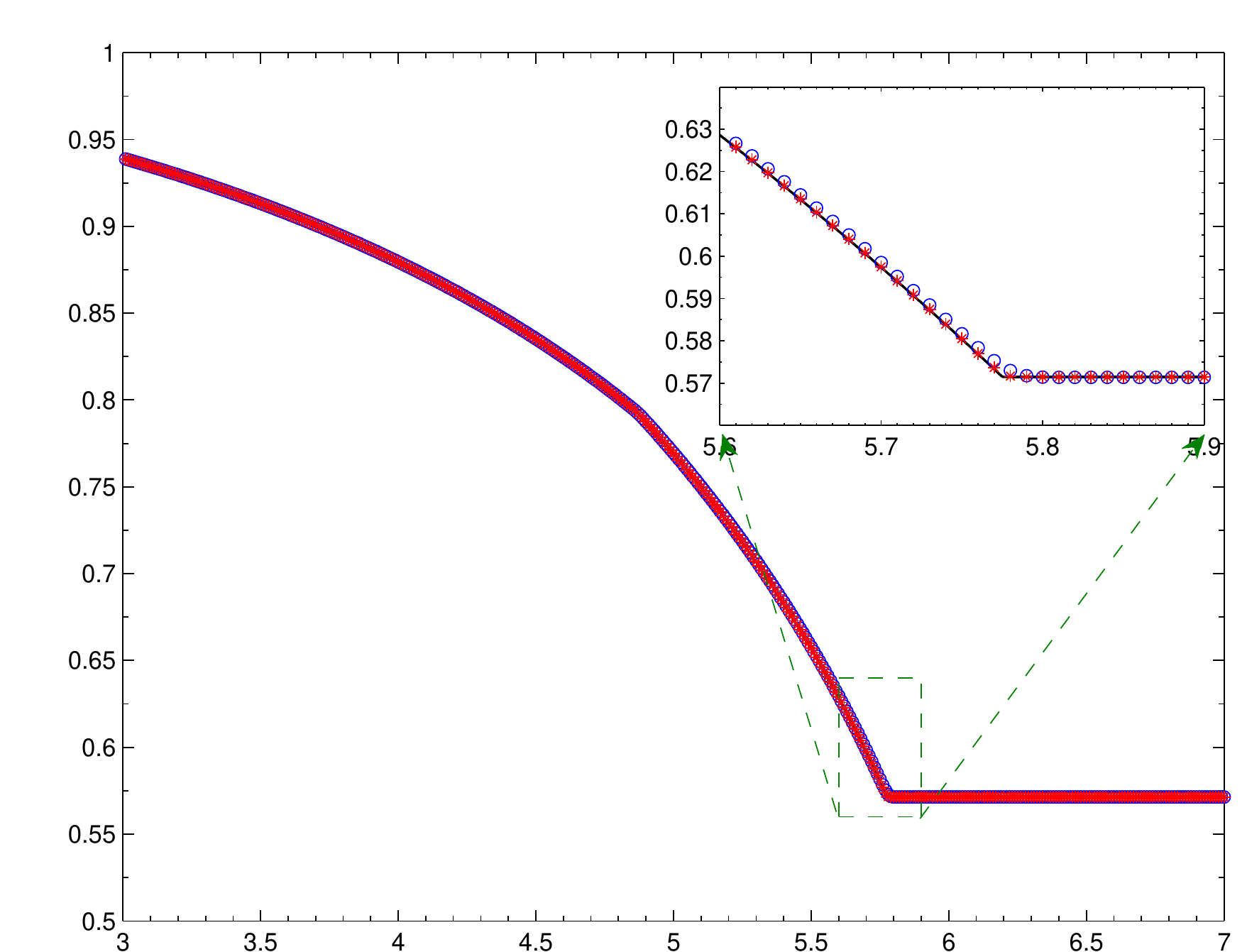}
        \includegraphics[height=0.38\textwidth]{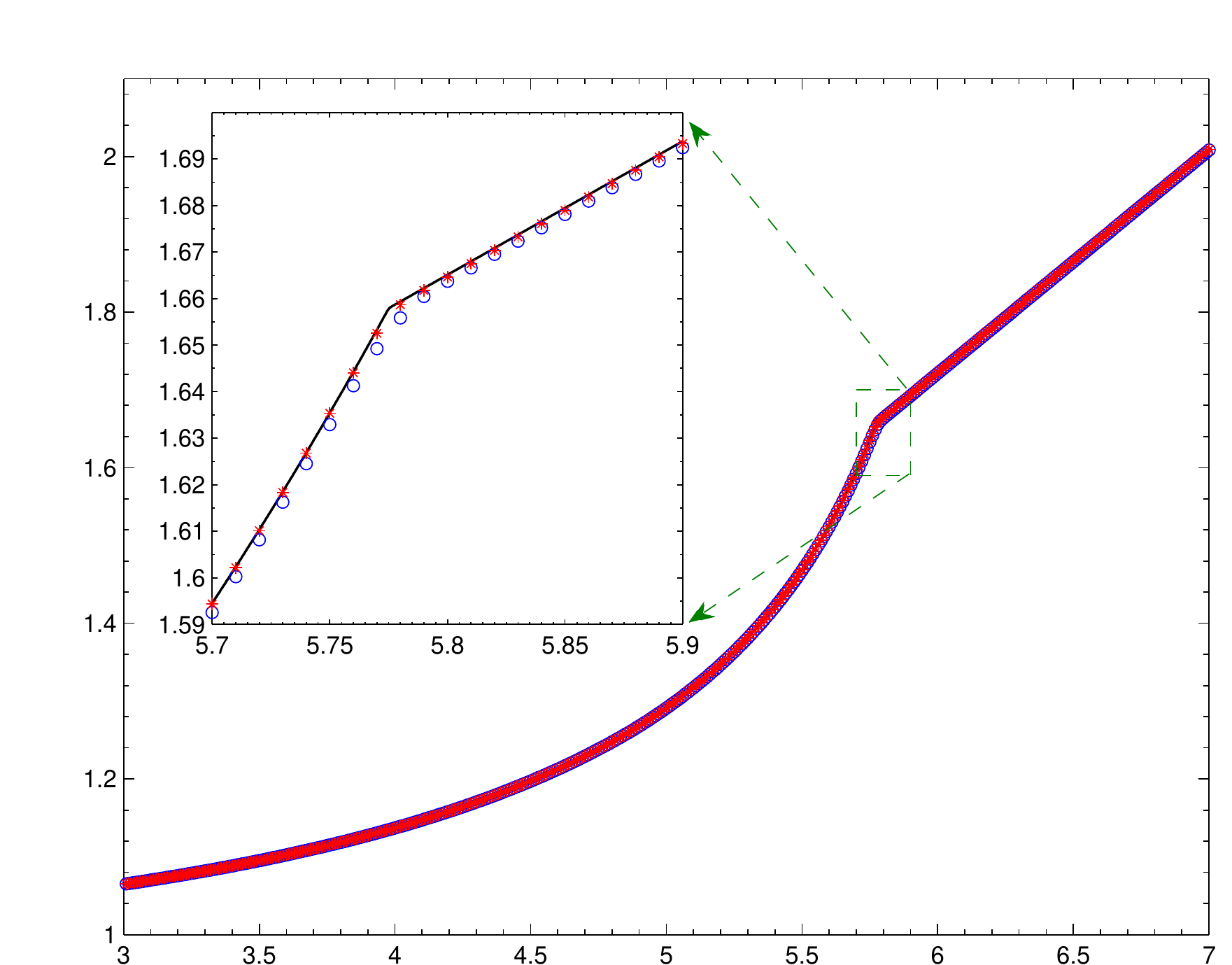}
    \caption{Same as Fig. \ref{fig:shockwave_rhov0.2}, except for the metric functions $A$ (left) and $B$ (right) at $t=t_0+1$.}
    \label{fig:shockwave_AB}
  \end{figure}

  \begin{figure}[!htbp]
    \centering
        \includegraphics[height=0.38\textwidth]{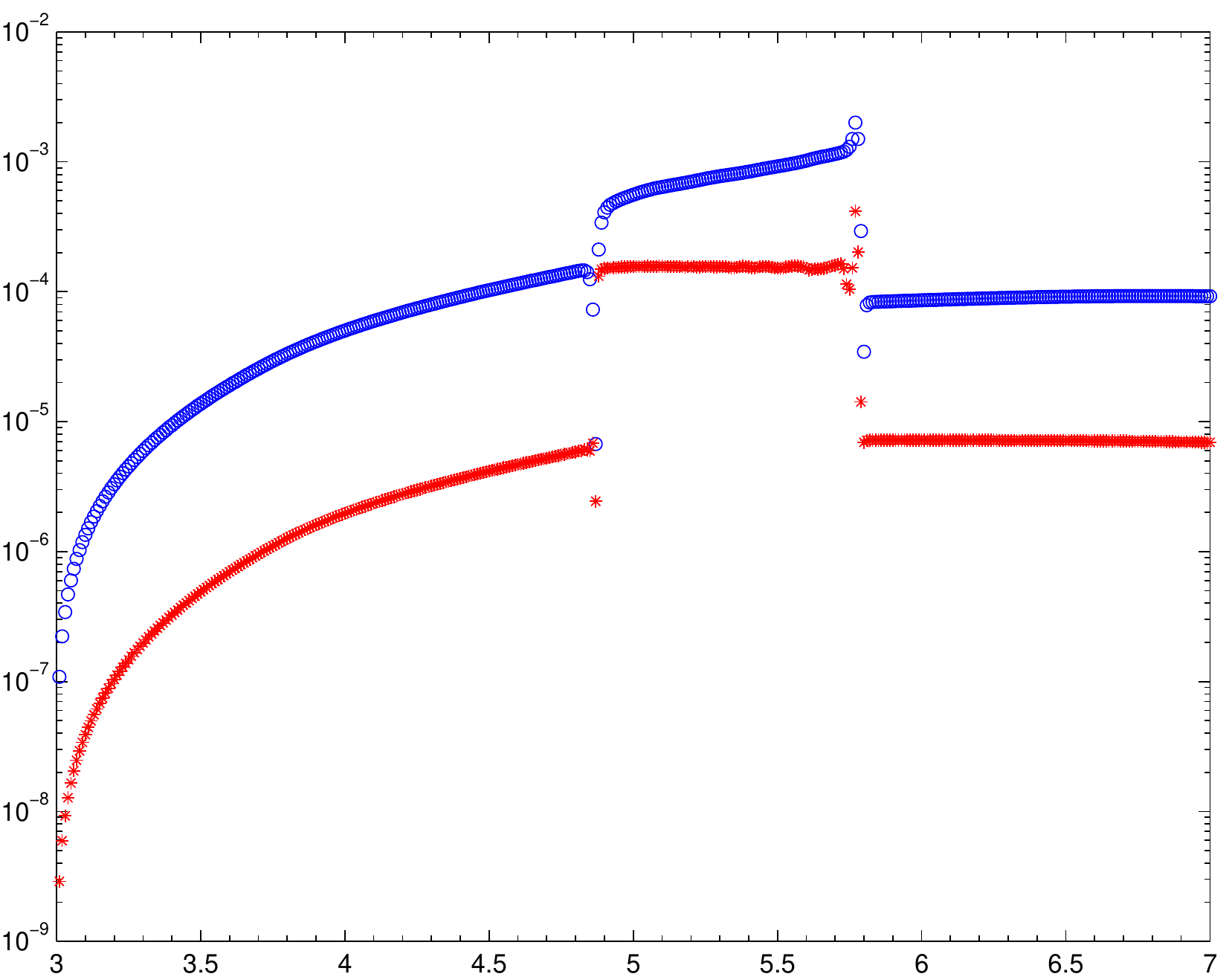}
        \includegraphics[height=0.38\textwidth]{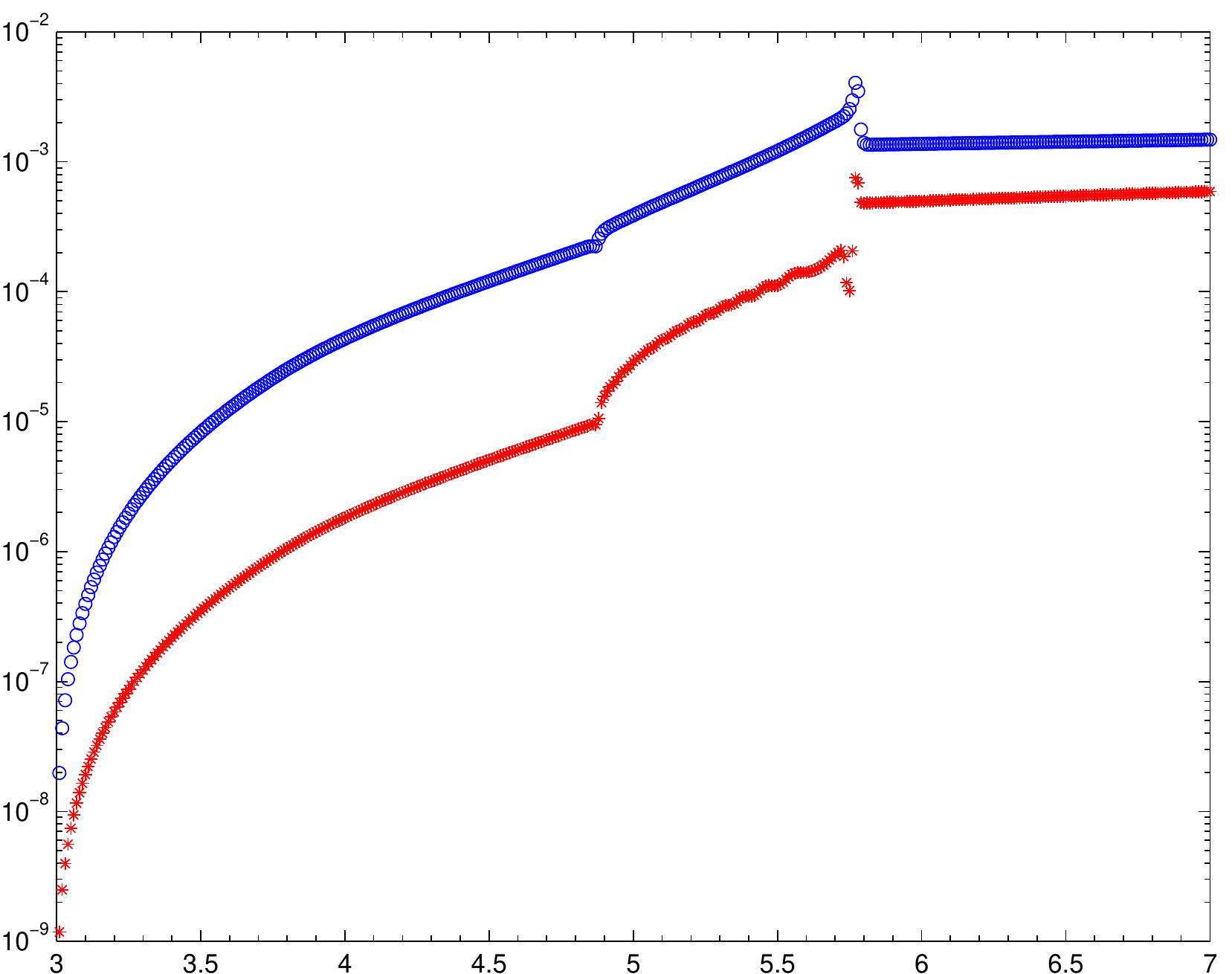}
    \caption{Example \ref{example:FRW1-TOV}:
    The numerical errors in the metric functions $A$ (left) and $B$ (right) at $t=t_0+1$.
    Those of the Godunov   and   GRP schemes
are  drawn in the symbols ``{\color{blue}$\circ$}'' and ``{\color{red}$\ast$}'', respectively.}
    \label{fig:shockwave_ErrAB}
  \end{figure}

Figs. \ref{fig:shockwave_rhov0.2}--\ref{fig:shockwave_rhov1.0}  show the numerical solutions
at $t=t_0 + 0.2,~t_0+0.6$, and $t_0+1$,  obtained
by using the first-order accurate Godunov scheme and the
    second-order accurate GRP scheme with  400 uniform cells, respectively.
It can be seen that the results agree well with the reference solutions and
the GRP scheme resolves the relativistic shock waves better than the Godunov scheme.

Figs. \ref{fig:shockwave_AB} and \ref{fig:shockwave_ErrAB} display the numerical solutions and
  corresponding errors  between the numerical   and   reference solutions in the metric functions $A$ and $B$
at $t=t_0+1$ by using the Godunov scheme and the GRP scheme with  400 uniform cells, respectively. These results show that
the GRP scheme is much more accurate than the Godunov scheme, and
the errors given by the GRP scheme is about ten percent of those computed by the Godunov scheme.

  \begin{figure}[!htbp]
    \centering
        \includegraphics[width=0.618\textwidth]{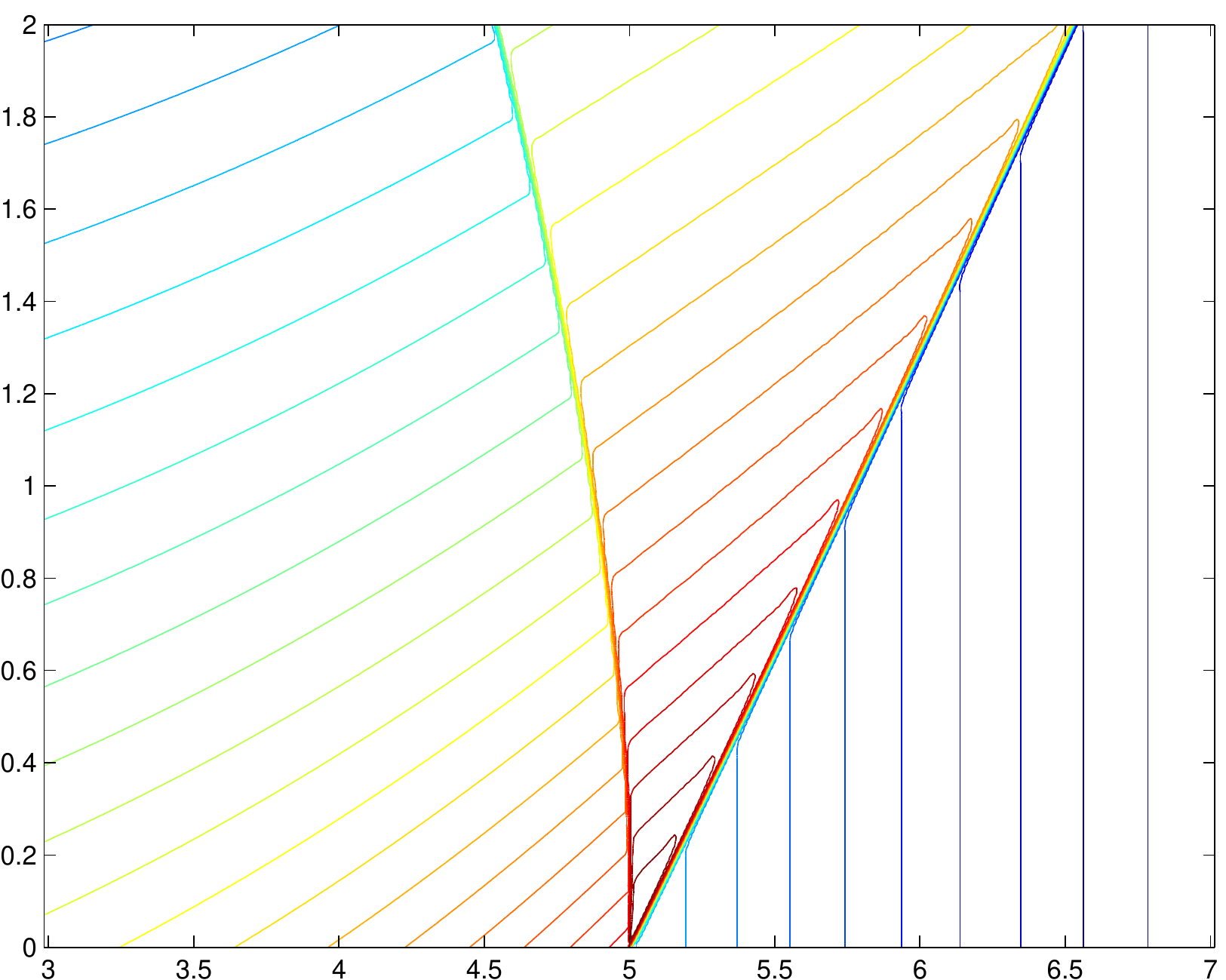}
    \caption{Example \ref{example:FRW1-TOV}:
    The contour of the rest energy density logarithm within the spacetime domain $[3,7]\times [t_0,t_0+2]$.
    }
    \label{fig:shockwave_Contour}
  \end{figure}

Fig. \ref{fig:shockwave_Contour} gives the contour of the rest energy density logarithm within the spacetime domain $[3,7]\times [t_0,t_0+2]$.
Two relativistic  shock waves are initially formed, and
the stronger and straight   moves to the right (the TOV region) while the weaker and curved propagates  in the FRW-1 region. A expanding pocket of higher density  is  produced and
then interacting with both  FRW-1 and TOV metrics.

  \begin{figure}[!htbp]
    \centering
        \includegraphics[height=0.38\textwidth]{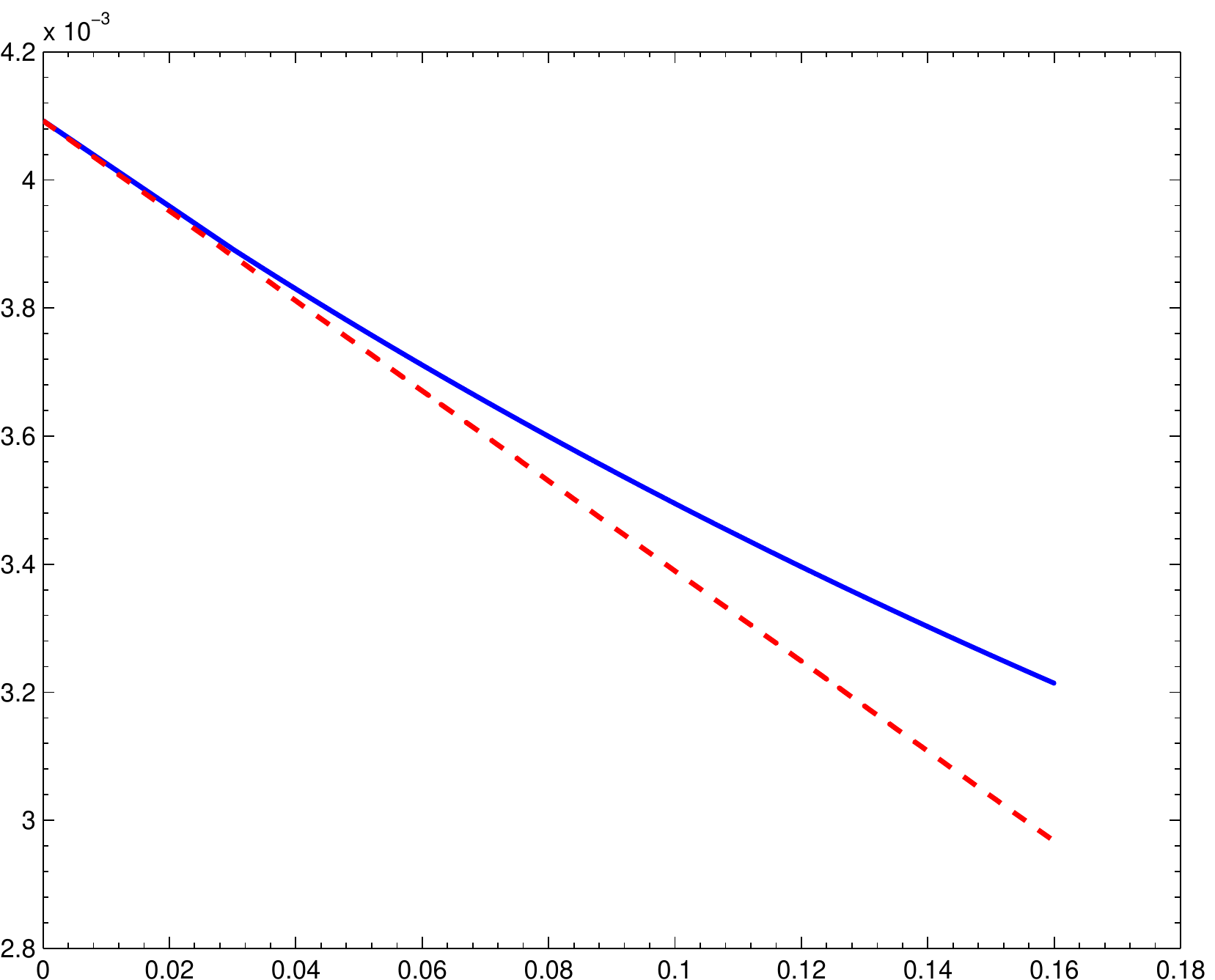}
        \includegraphics[height=0.38\textwidth]{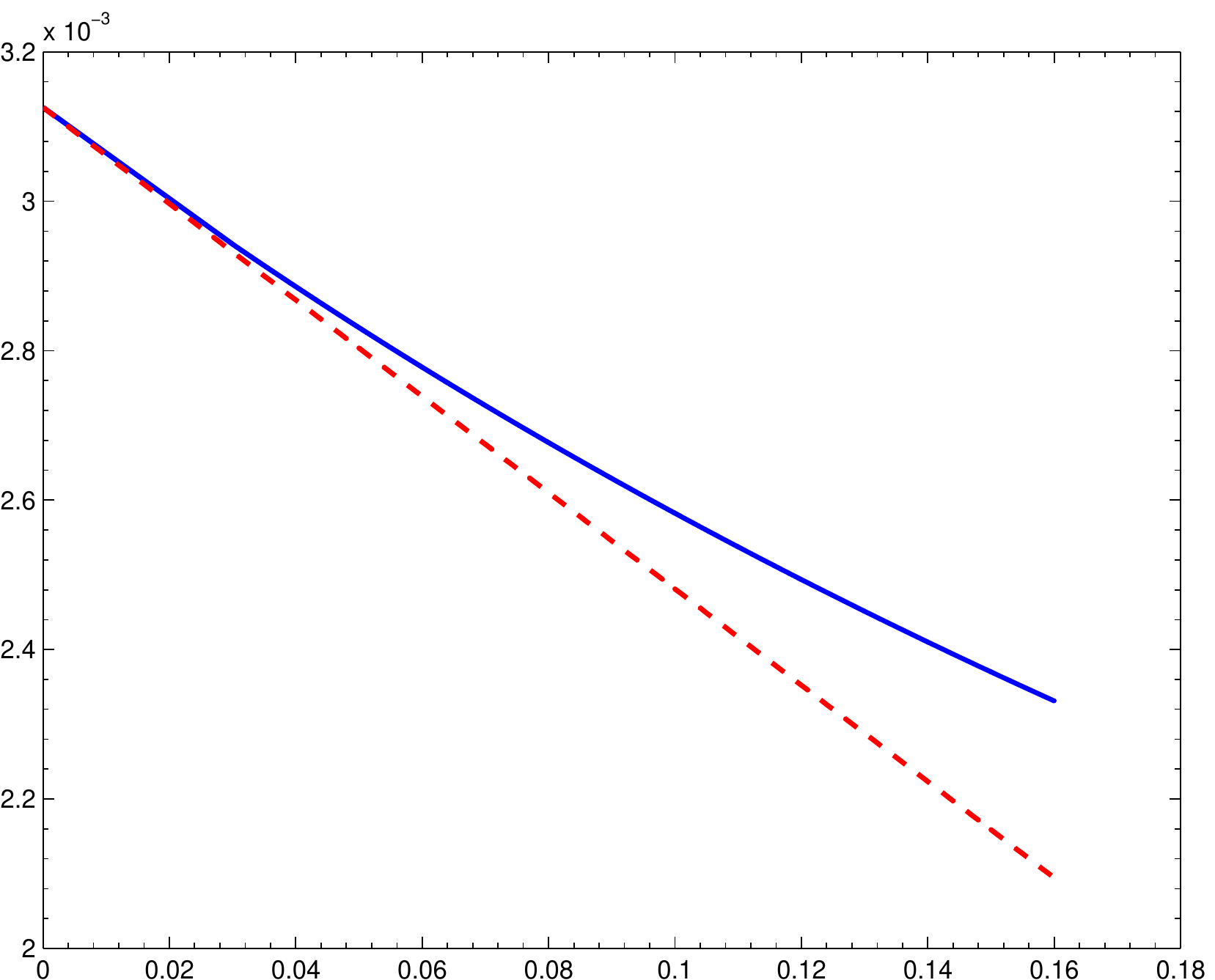}
    \caption{Example \ref{example:FRW1-TOV}: Comparison of the reference   (solid lines) and GRP solver based solutions (dash lines). The left and right are  the first
    and the second  components of $\vec U(t_0 + \tau,r_0)$, respectively.
    The horizontal axis is the simulation time $\tau$.}
    \label{fig:shockwave_GRPsolu}
  \end{figure}

Since this problem itself is  GRP, it may be used to validate the accuracy of our GRP solver presented in Section \ref{sec:PuPt}.
Fig.  \ref{fig:shockwave_GRPsolu} gives a comparison of
 the GRP solver based solutions at the interface
\[
{\vec U^{\mbox{\tiny GRP}}}({t_0} + \tau,r_0 ) = {\vec U^{\mbox{\tiny RP}}} + {\left( {\frac{{\partial \vec U}}{{\partial t}}} \right)^{\mbox{\tiny GRP}}}\tau, \]
  with the reference numerical solutions denoted by $\vec U^{\mbox{\tiny REF}}({t_0} + \tau,r_0 )$,
where ${\vec U^{\mbox{\tiny RP}}}$ is the exact solution to the (classical) RP \eqref{eq:InitialData-RP}
with $A_*=A(t_0,r_0), B_*=B(t_0,r_0)$, and the initial data $\vec U_L$ and $\vec U_R$ given by the left- and right-side limits at $r=r_0$ of the initial
data \eqref{eq:FRW1-TOV-rhov}, and ${\left( {{{\partial \vec U}}/{{\partial t}}} \right)^{\mbox{\tiny GRP}}}$ is computed by resolving the GRP at $r=r_0$.
The   reference solutions  $\vec U^{\mbox{\tiny REF}}({t_0} + \tau,r_0 )$ are computed by a second-order accurate MUSCL method with the Godunov flux on a very fine uniform mesh with the spatial
step-size of $10^{-6}$.
%
 Table \ref{tab:GRPsolver1} lists the errors  approximately evaluated by
$$
e_{\mbox{\tiny GRP}} (\tau) = \left\| \vec U^{\mbox{\tiny GRP}}({t_0} + \tau,r_0 ) - \vec U^{\mbox{\tiny REF}}({t_0} + \tau,r_0 ) \right\|_2,
$$
and corresponding convergence rates of the GRP solver. The results show that the GRP solver has second-order accuracy
with
respect to $\tau$ and thus
the accuracy of the limiting values ${\left( {{{\partial \vec U}}/{{\partial t}}} \right)^{\mbox{\tiny GRP}}}$ is validated.

\begin{table}[htbp]
  \centering
    \caption{\small Example \ref{example:FRW1-TOV}: Numerical errors and corresponding convergence rates of the GRP solver based solution.
  }
\begin{tabular}{|c||c|c|c|c|c|c|c|c|}
  \hline
  {$\tau$}
 & 0.16  &0.14   & 0.12  & 0.1  & 0.08  &0.06  & 0.04 &0.02 \\
 \hline
{$e_{\mbox{\tiny GRP}} (\tau)$} &
 3.42e-4&2.70e-4&2.04e-4&
 1.46e-4&9.67e-5& 5.62e-5 &2.59e-5 & 6.70e-6                 \\
order &--&  1.78 & 1.80 & 1.83& 1.86& 1.88&
1.92& 1.95                        \\
\hline
\end{tabular}\label{tab:GRPsolver1}
\end{table}

\end{example}

\begin{example}[Time reversal model]\label{example:TimeRev}\rm
The example considers reversing time and running the matched spacetime given in Example \ref{example:FRW1-TOV}. For this purpose,
the setup of this problem is the same as that for Example \ref{example:FRW1-TOV} except that the sign of the time is changed from
positive to negative. In other words, the initial time for this test is $t_0 = - \frac{ r_0 (1+v_0^2) }{2 v_0}$.

  \begin{figure}[!htbp]
    \centering
        \includegraphics[width=0.618\textwidth]{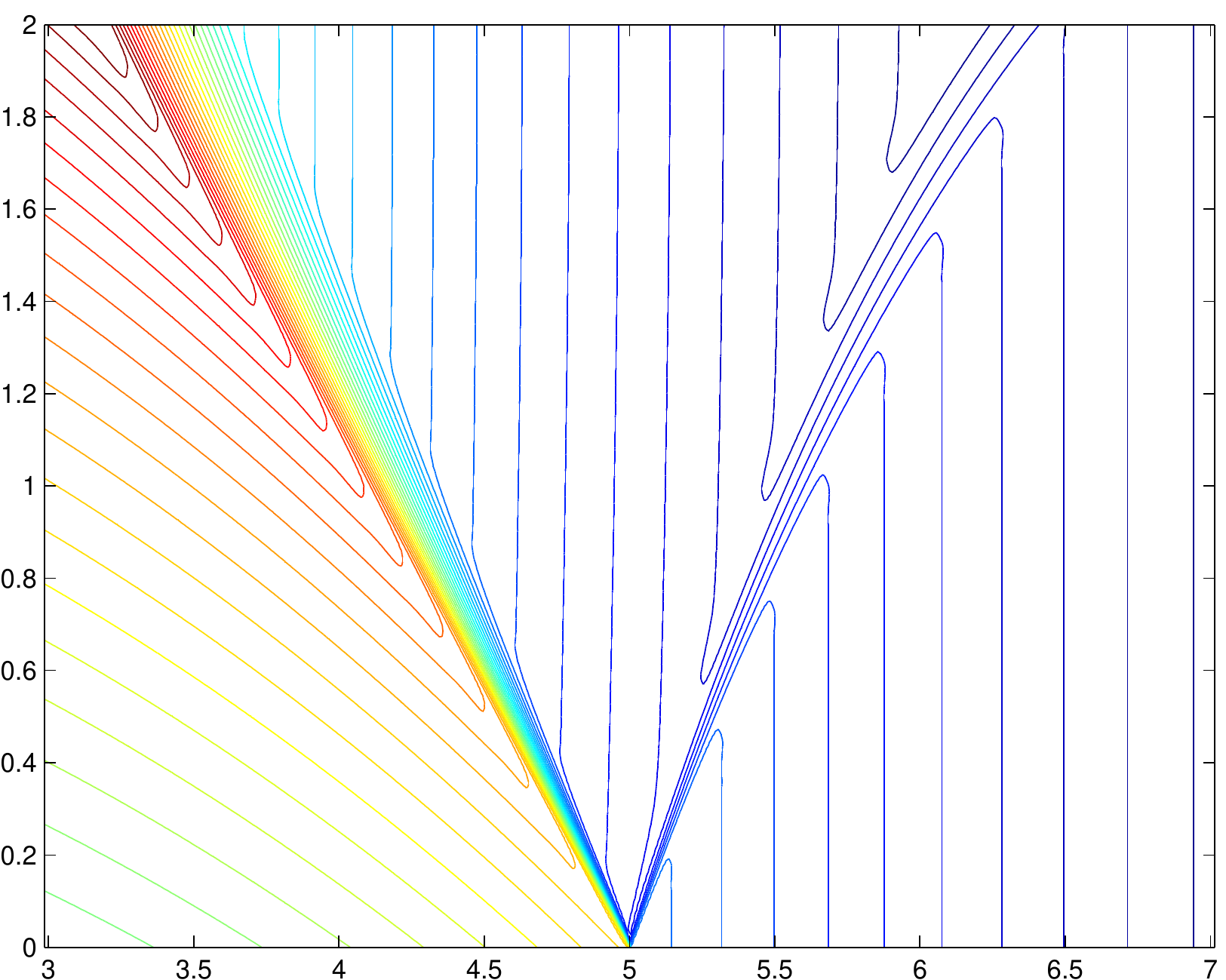}
    \caption{Same as Fig. \ref{fig:shockwave_Contour}, except for Example \ref{example:TimeRev}.}
    \label{fig:timerev_Contour}
  \end{figure}

   \begin{figure}[!htbp]
    \centering
        \includegraphics[height=0.37\textwidth]{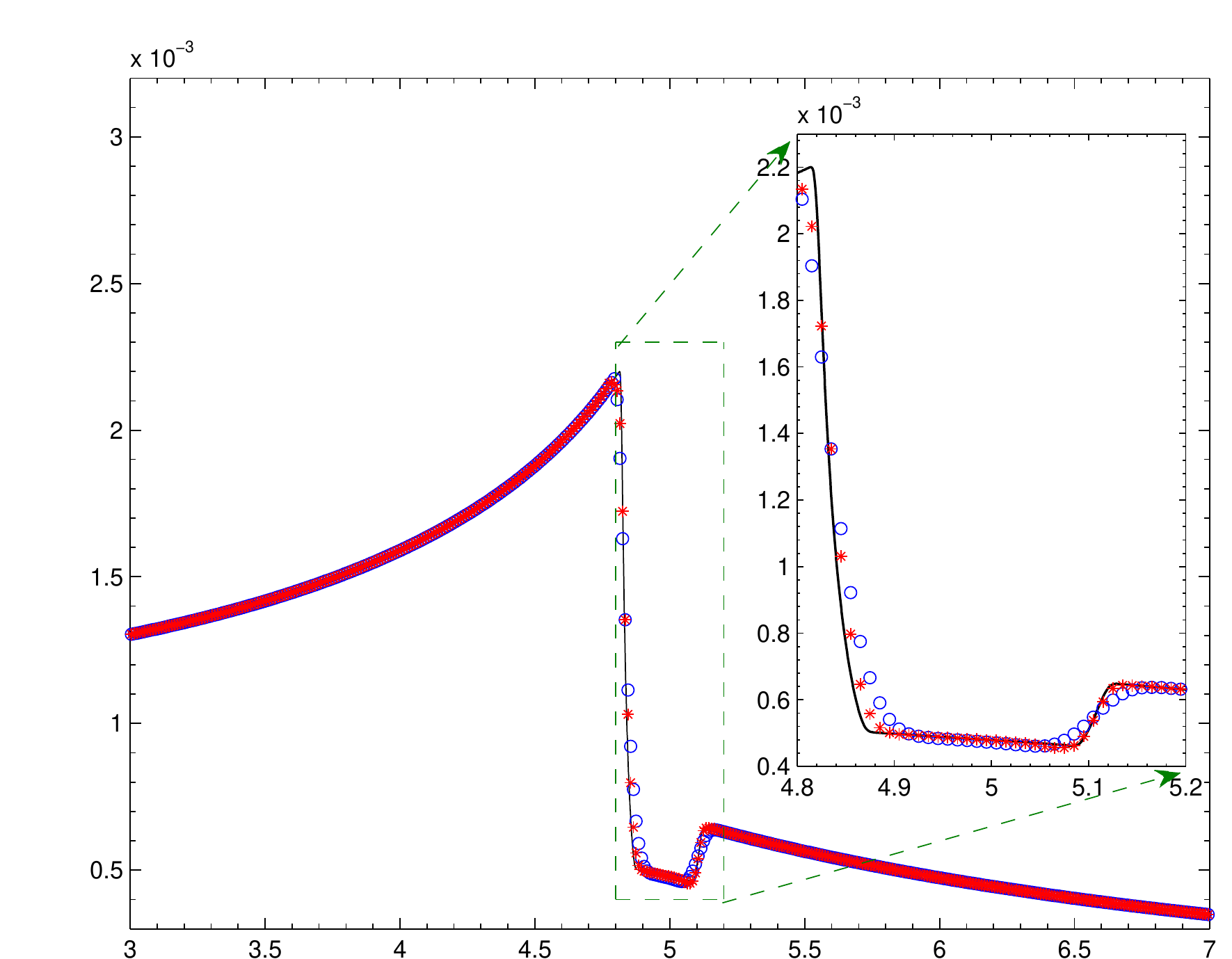}
        \includegraphics[height=0.38\textwidth]{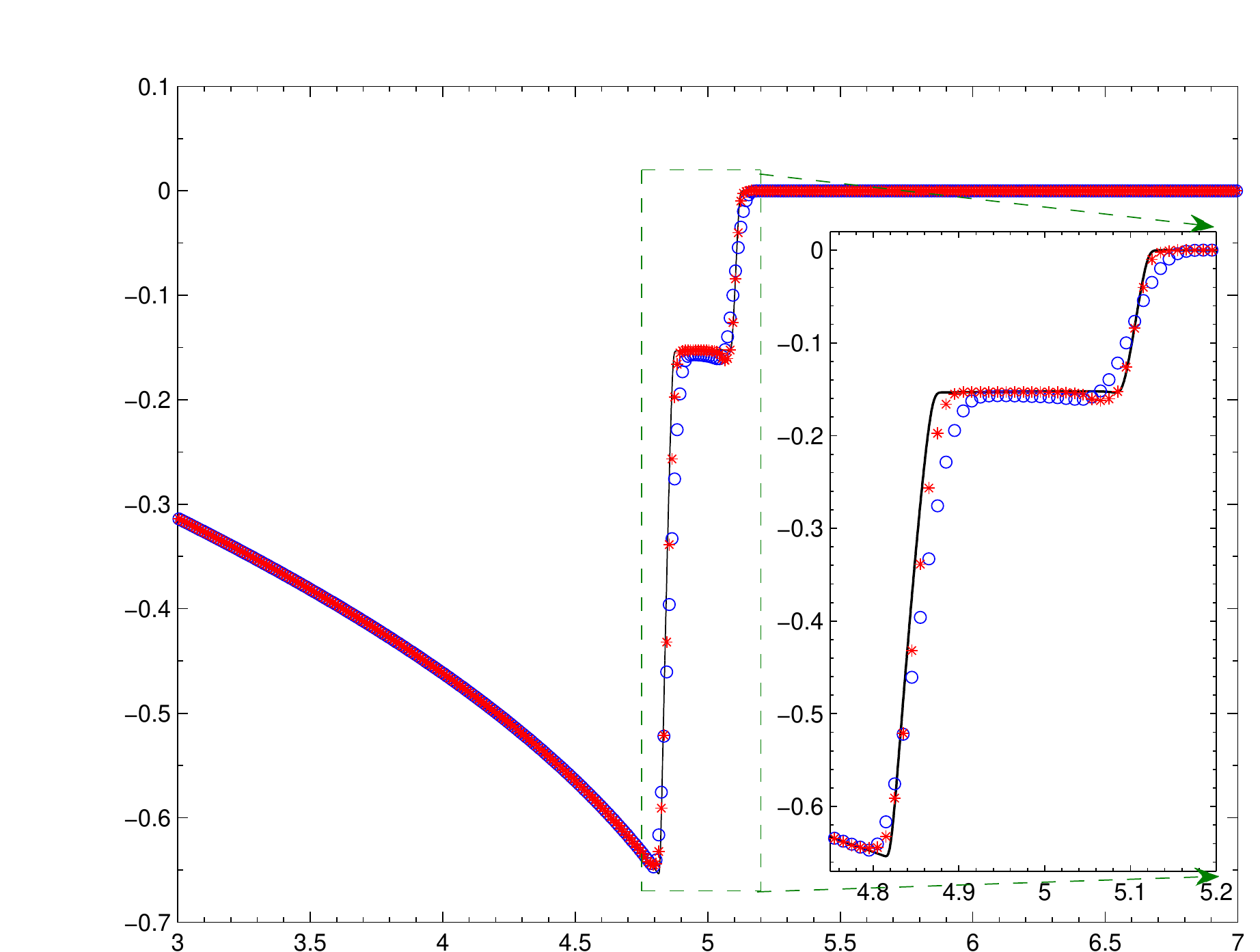}
    \caption{Same as Fig. \ref{fig:shockwave_rhov0.2}, except for Example \ref{example:TimeRev}.}
    \label{fig:timerev_rhov0.2}
  \end{figure}
  \begin{figure}[!htbp]
    \centering
        \includegraphics[height=0.38\textwidth]{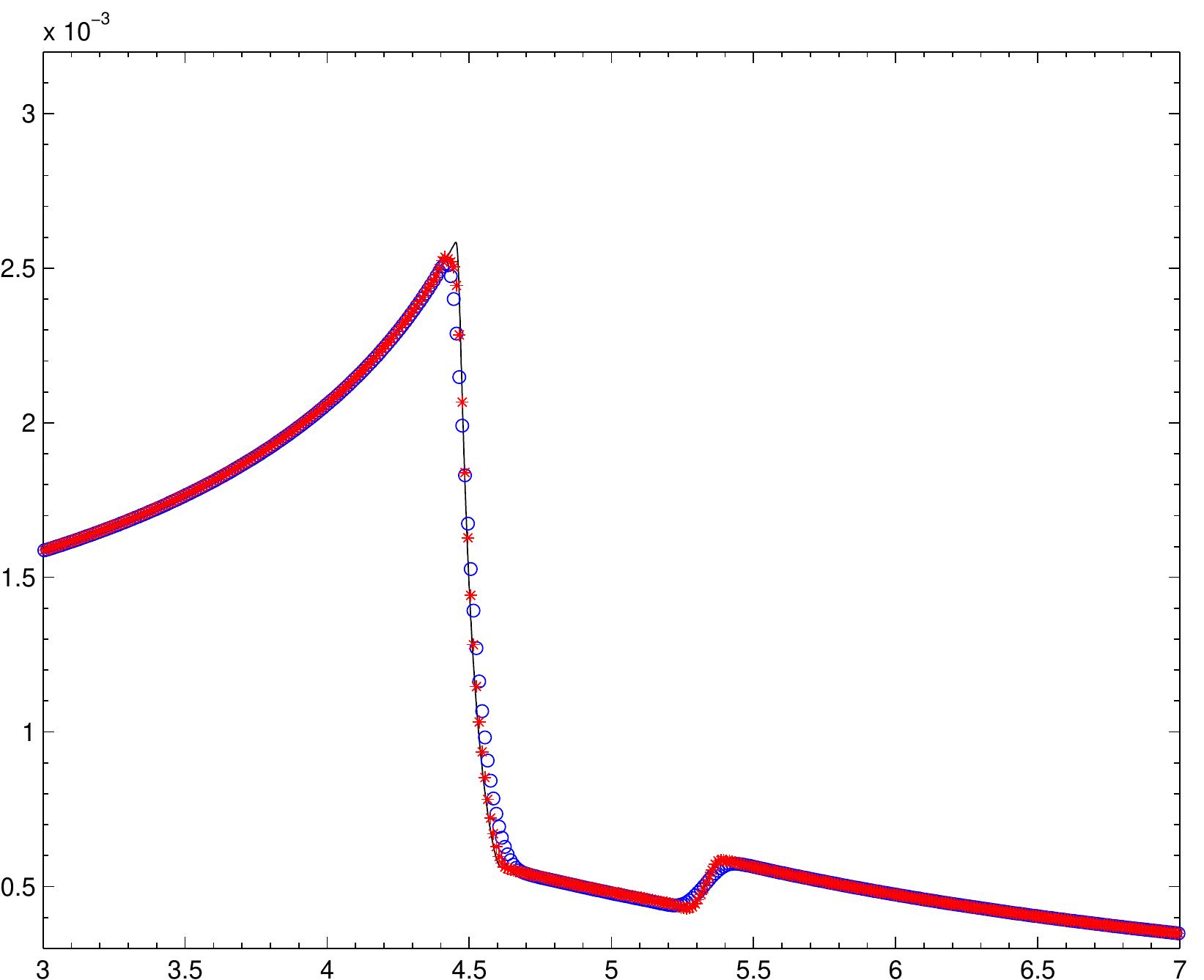}
        \includegraphics[height=0.37\textwidth]{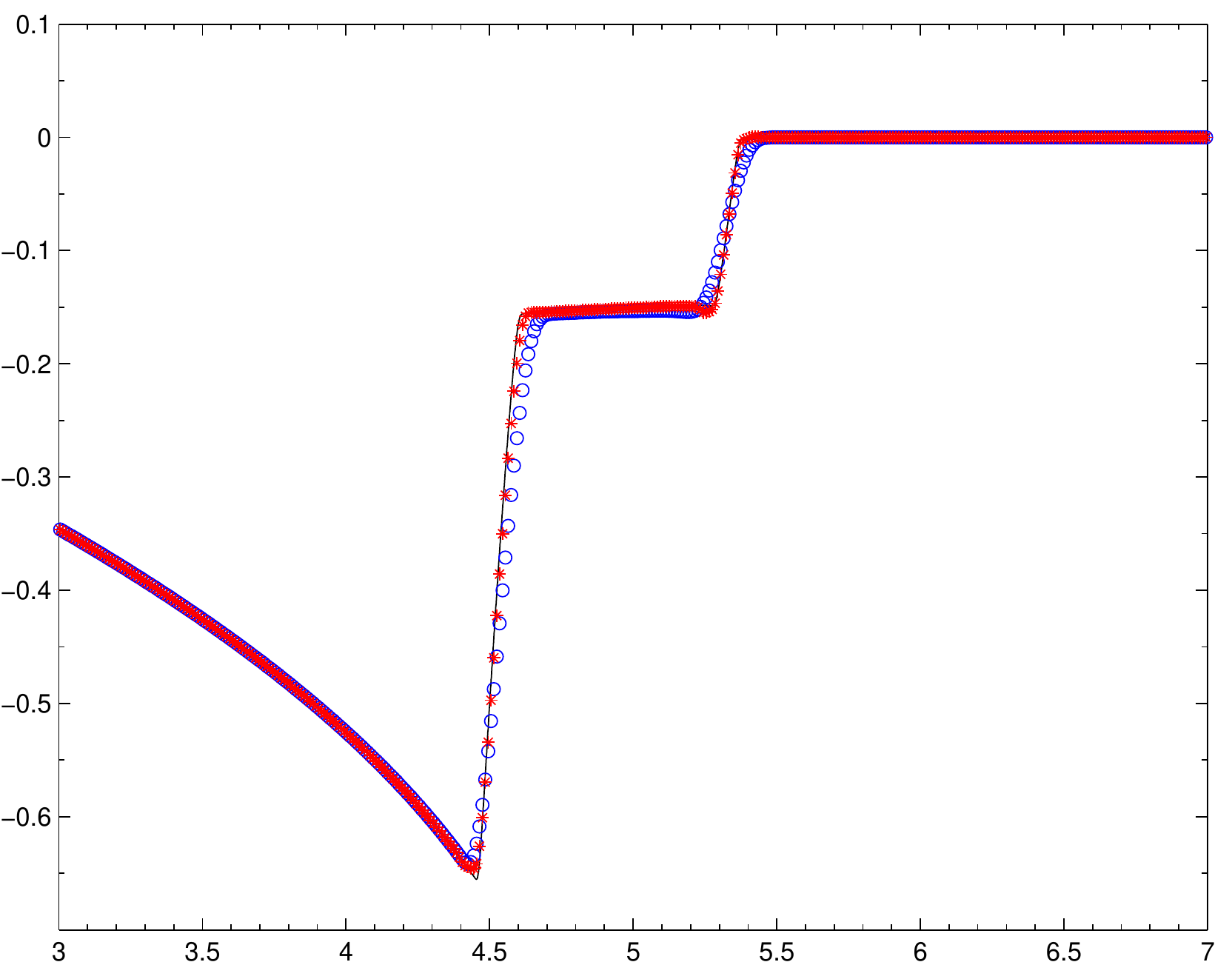}
    \caption{Same as Fig. \ref{fig:shockwave_rhov0.2}, except for Example \ref{example:TimeRev} and $t=t_0+0.6$.}
    \label{fig:timerev_rhov0.6}
  \end{figure}

  \begin{figure}[!htbp]
    \centering
        \includegraphics[height=0.38\textwidth]{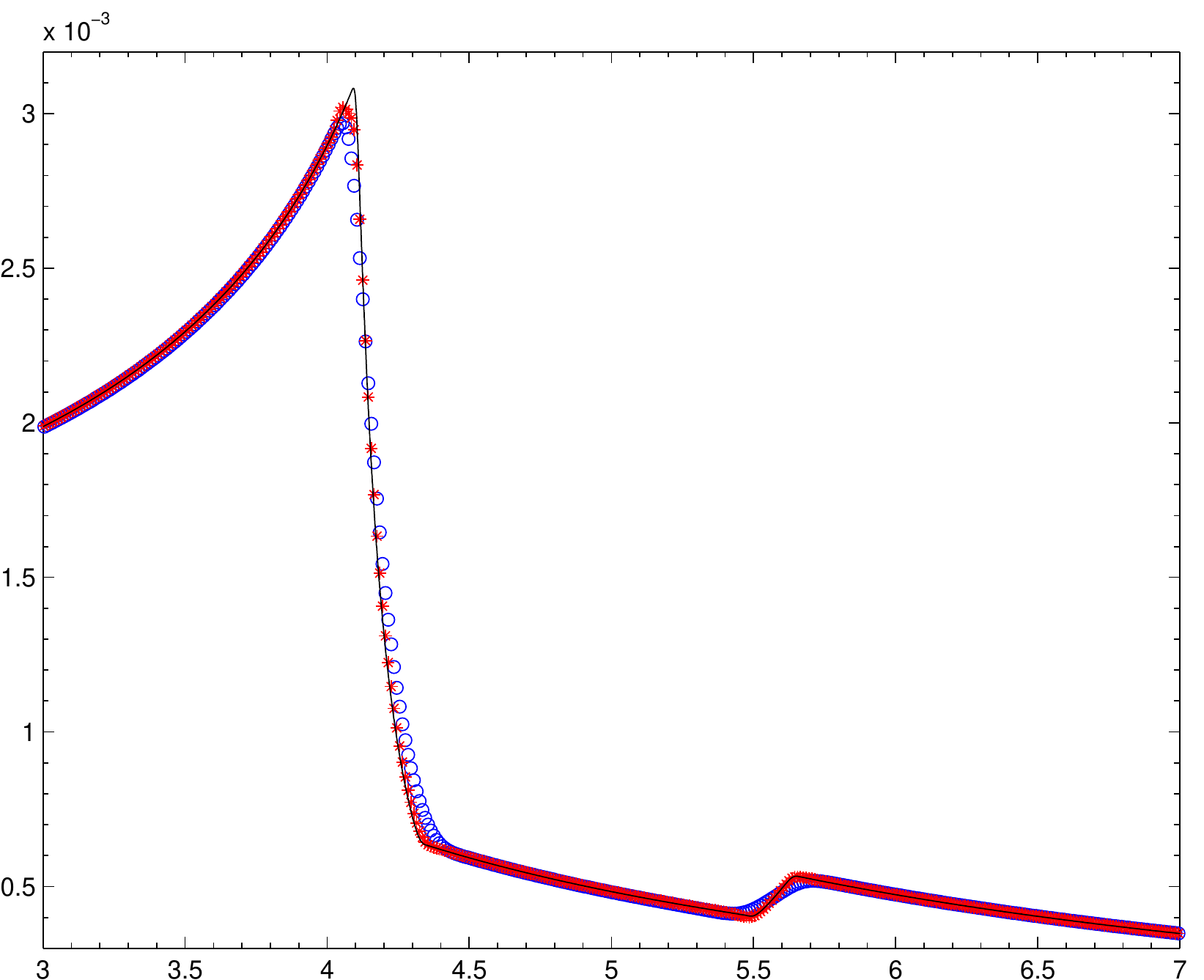}
        \includegraphics[height=0.37\textwidth]{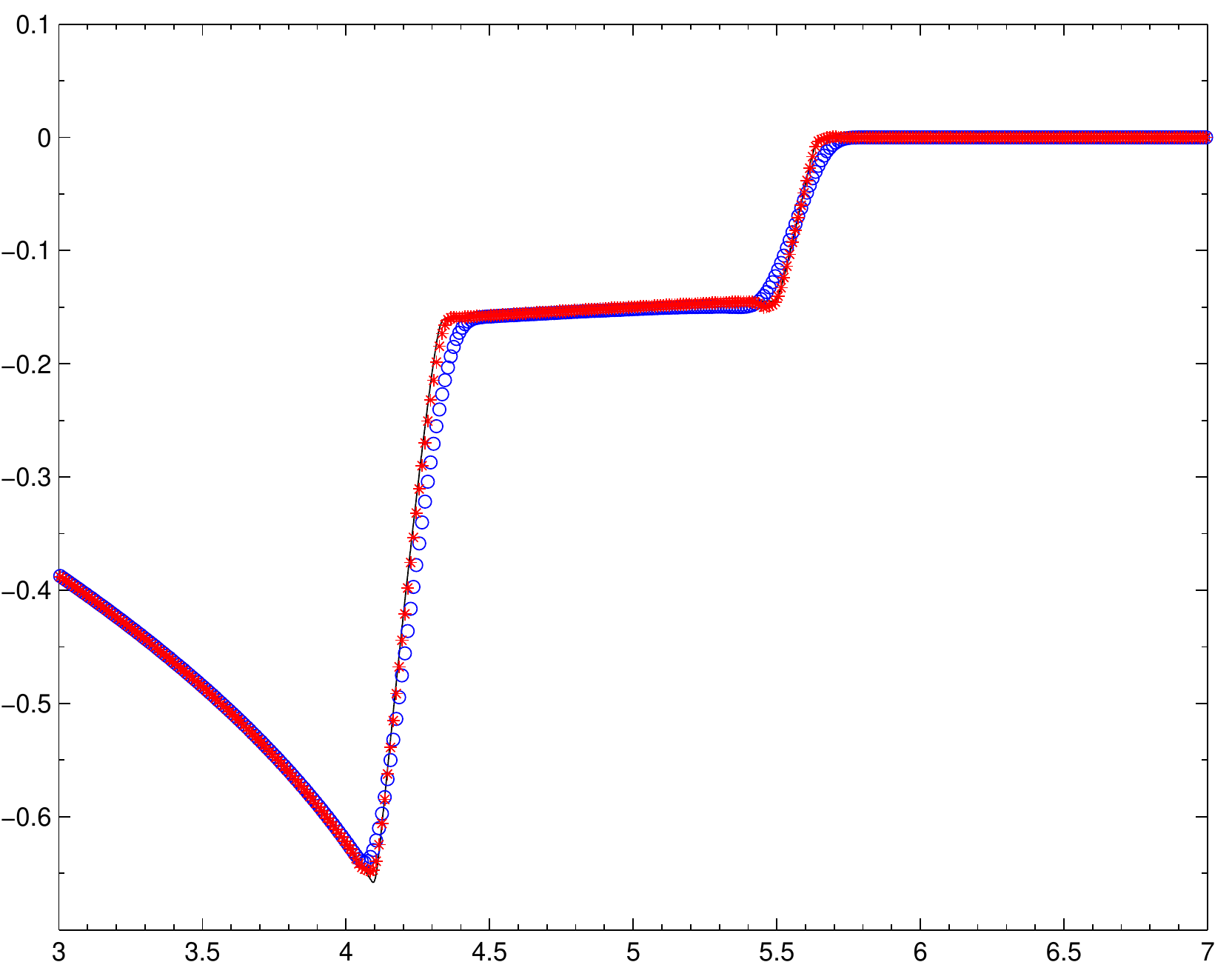}
    \caption{Same as Fig. \ref{fig:shockwave_rhov0.2}, except for Example \ref{example:TimeRev} and $t=t_0+1$.}
    \label{fig:timerev_rhov1.0}
  \end{figure}

Fig. \ref{fig:timerev_Contour} gives the contour of the rest energy density logarithm within the spacetime domain $[3,7]\times [t_0,t_0+2]$
given by the GRP scheme with 400 uniform cells.
From the initial discontinuity, two relativistic rarefaction waves are formed:
a stronger wave moving to the left and connecting  the FRW and  TOV regions while a
weaker propagating  in the TOV region.  A pocket of lower
density is produced between the two rarefaction waves.
Those are different from the forward time case in Example \ref{example:FRW1-TOV}, where
 two shock waves are surrounding a region of higher density.
Figs. \ref{fig:shockwave_rhov0.2}--\ref{fig:shockwave_rhov1.0}  show the numerical solutions of the fluid variables
at $t=t_0 + 0.2,~t_0+0.6$, and $t_0+1$
by using the Godunov  and   GRP schemes with  400 uniform cells, respectively.
We see that the results agree well with the reference solutions and
the GRP scheme exhibits better resolution than the Godunov scheme.
As the time increases, both the rarefaction waves and the pocket are
expanding. At the left of this pocket, there is a density spike,   growing and
moving in the FRW region, and more matter is falling into the center of
the universe as the time increase. If  the more long time simulation is done,
a black hole will be eventually produced  \cite{Vogler2010}.

  \begin{figure}[!htbp]
    \centering
        \includegraphics[height=0.37\textwidth]{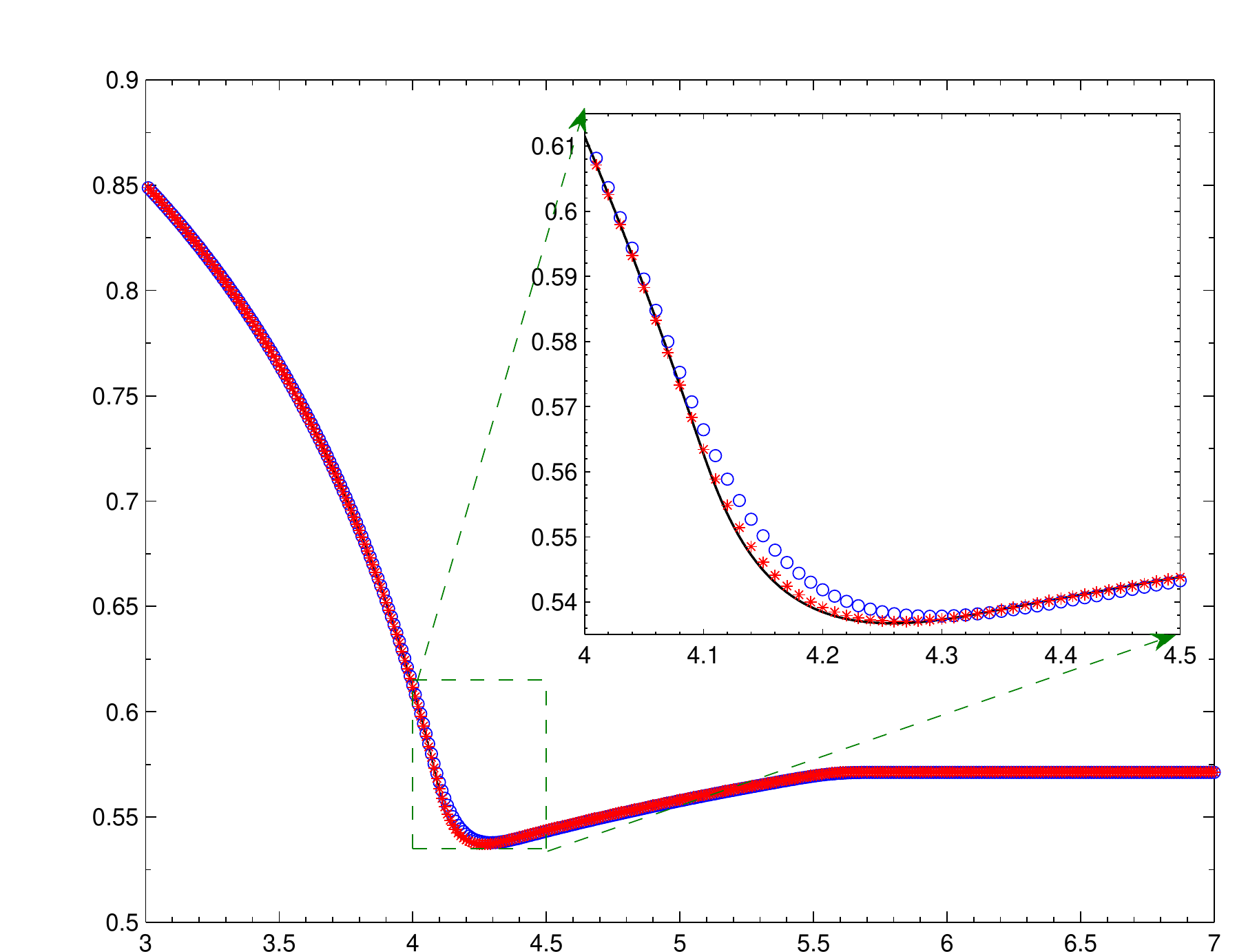}
        \includegraphics[height=0.37\textwidth]{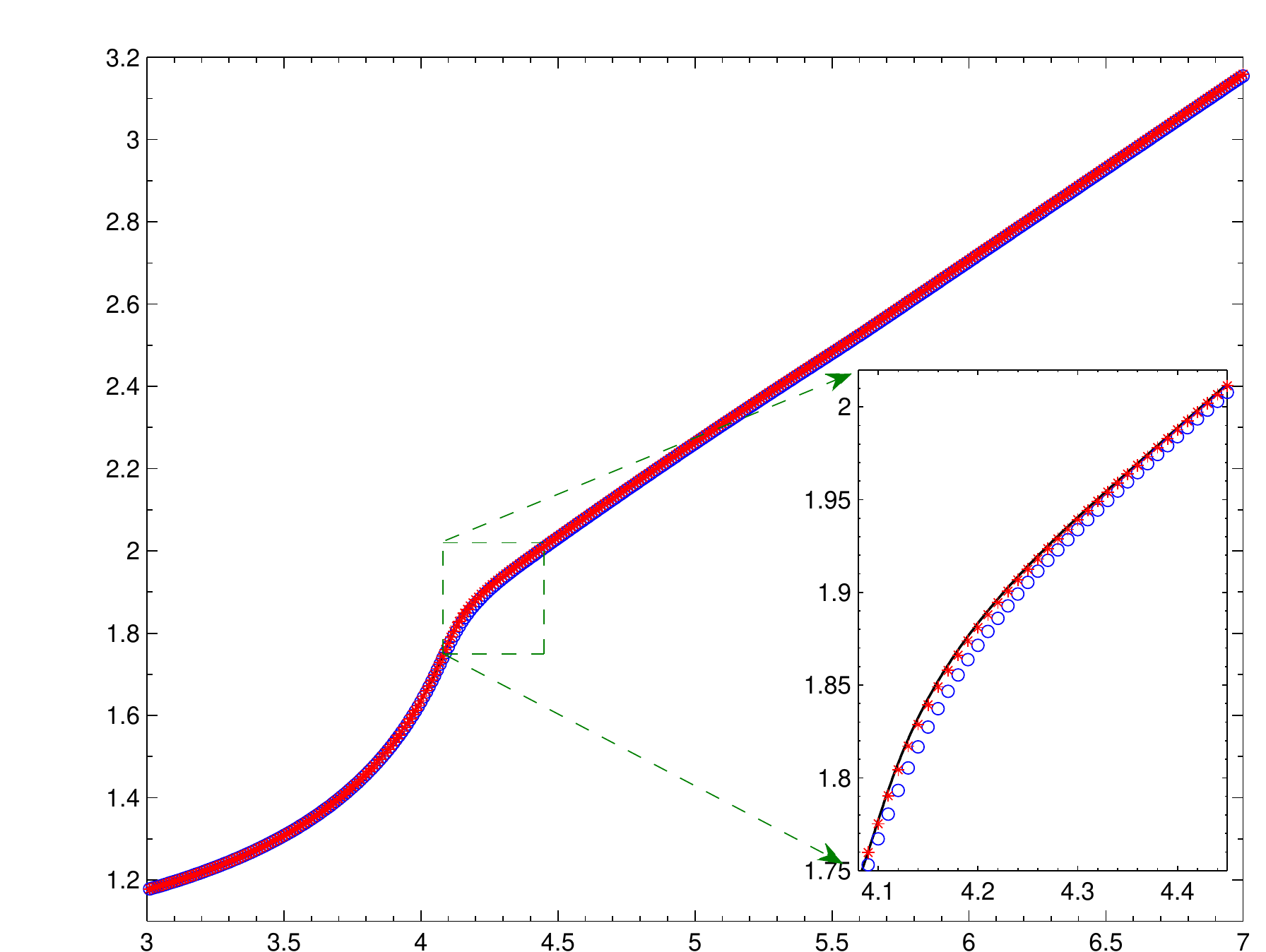}
    \caption{Same as Fig. \ref{fig:shockwave_rhov0.2}, except for Example \ref{example:TimeRev} and the metric functions $A$ (left) and $B$ (right) at $t=t_0+1$.}
    \label{fig:timerev_AB}
  \end{figure}

  \begin{figure}[!htbp]
    \centering
        \includegraphics[height=0.38\textwidth]{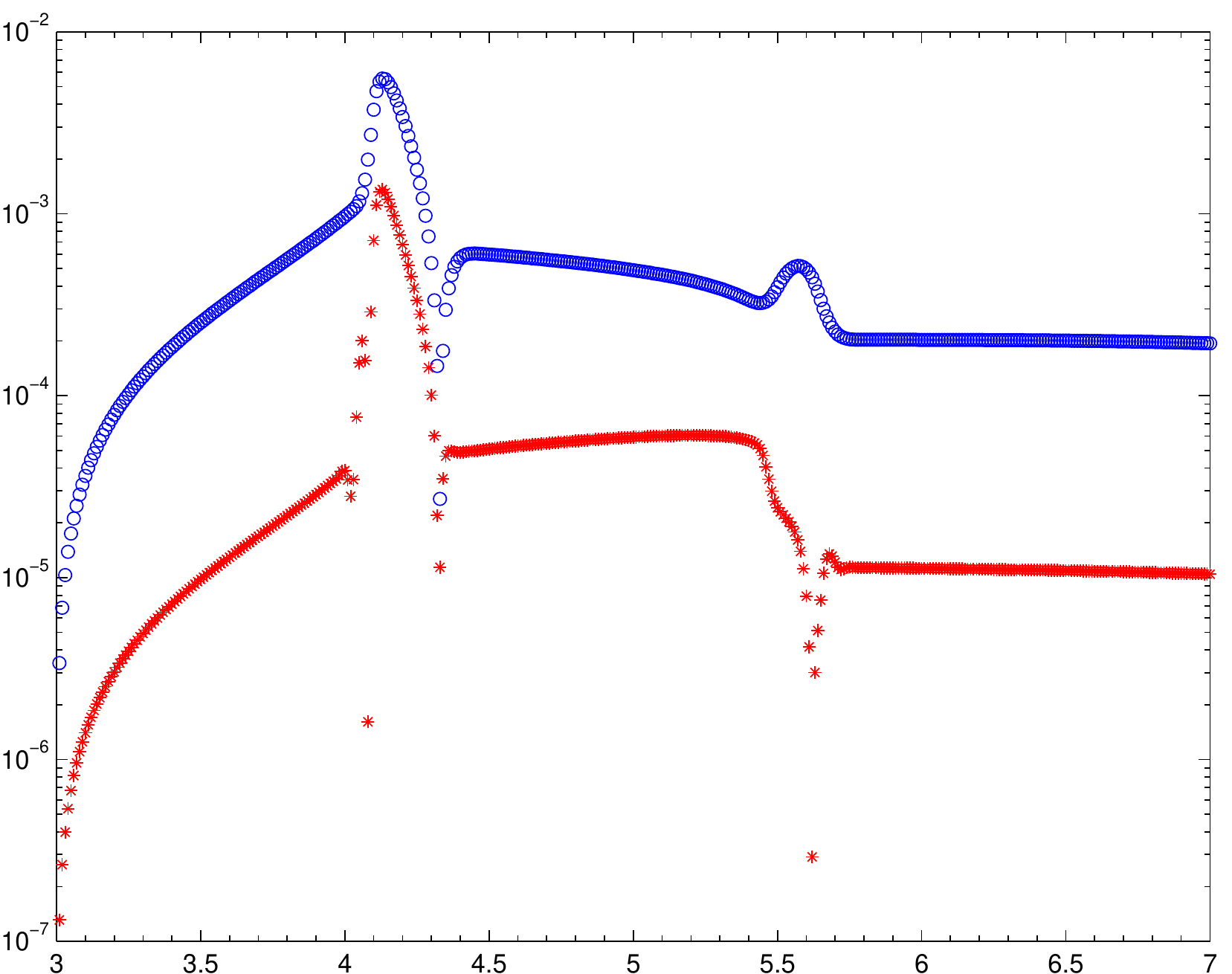}
        \includegraphics[height=0.38\textwidth]{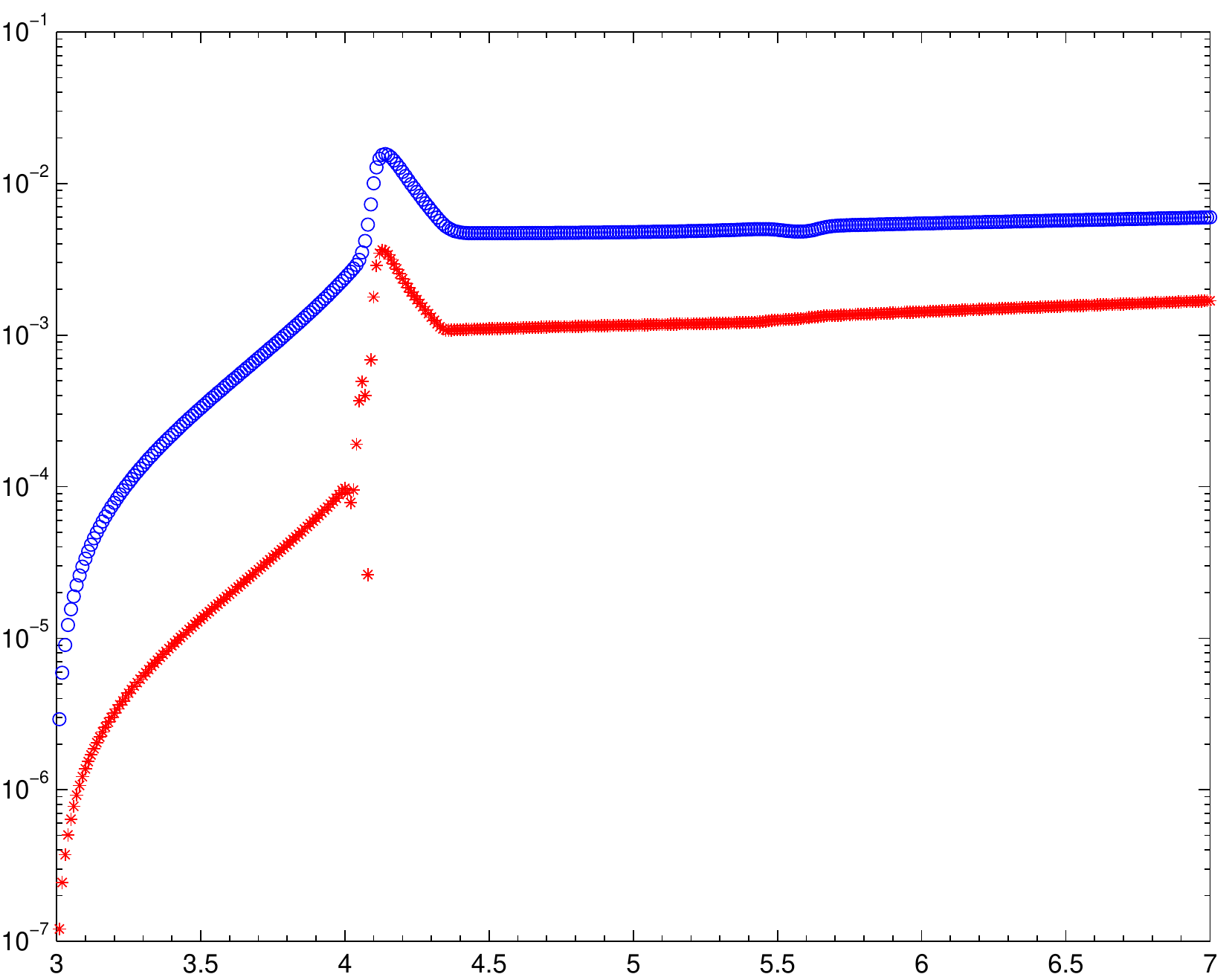}
    \caption{Same as Fig. \ref{fig:shockwave_ErrAB}, except for Example \ref{example:TimeRev}.}
    \label{fig:timerev_ErrAB}
  \end{figure}

Figs. \ref{fig:timerev_AB} and \ref{fig:timerev_ErrAB}  display the numerical solutions and
the corresponding errors in the metric functions $A$ and $B$
at $t=t_0+1$ by using the Godunov scheme and the GRP scheme with  400 uniform cells, respectively. These results show that
the GRP scheme is much more accurate than the Godunov scheme, and
the errors given by the GRP scheme is about ten percent of those computed by the Godunov scheme.

  \begin{figure}[!htbp]
    \centering
        \includegraphics[height=0.38\textwidth]{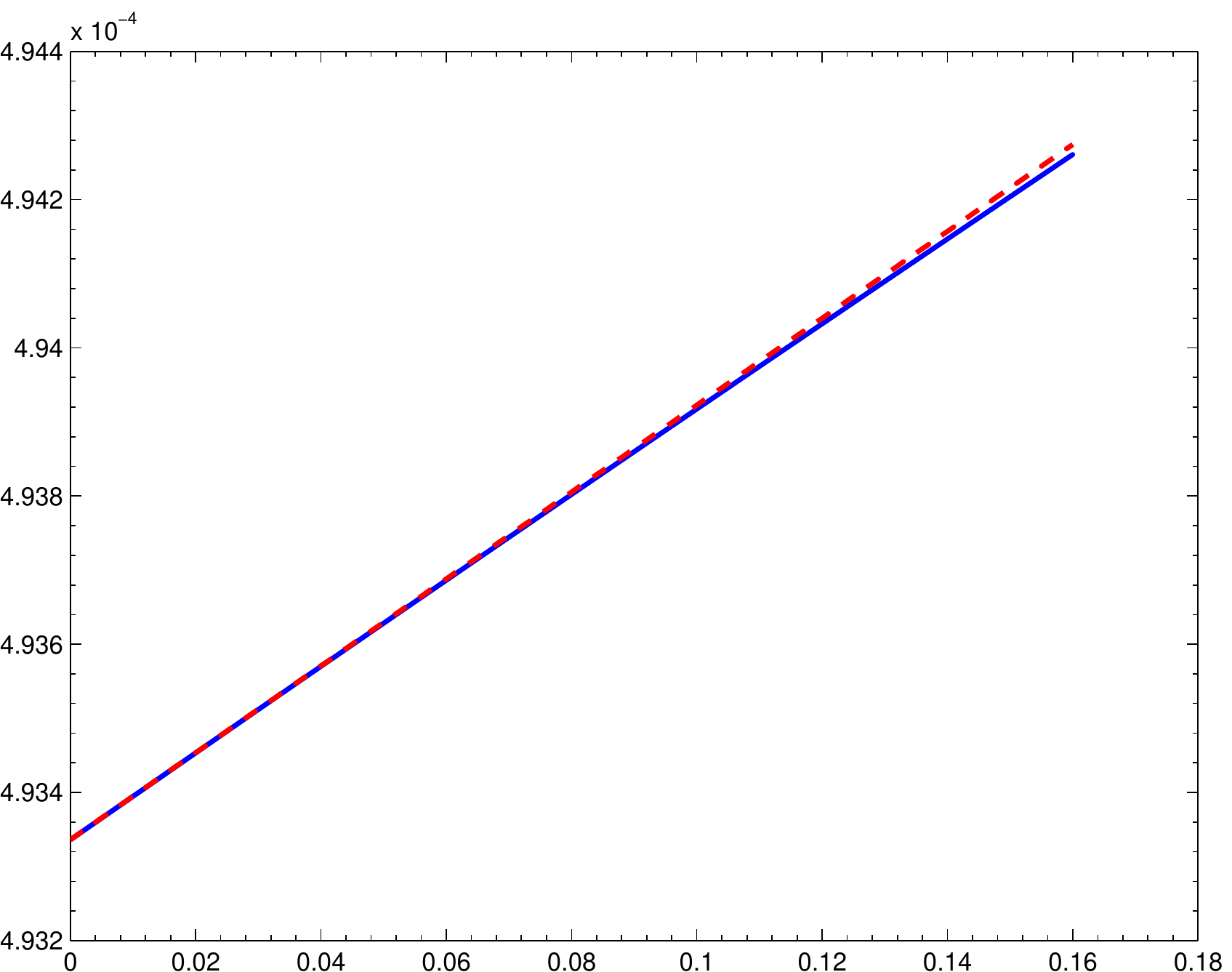}
        \includegraphics[height=0.38\textwidth]{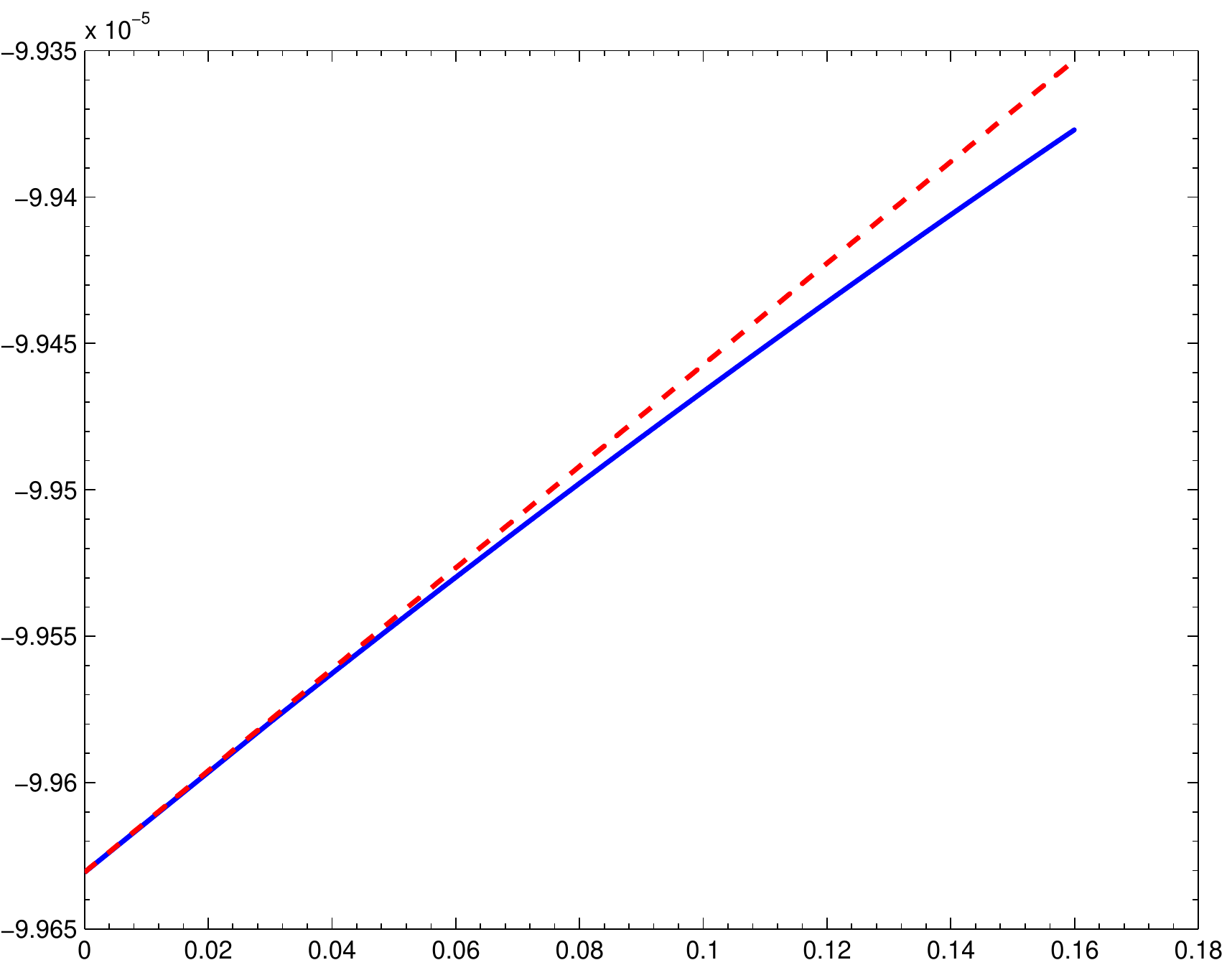}
    \caption{Same as Fig. \ref{fig:shockwave_GRPsolu}, except for Example \ref{example:TimeRev}.}
    \label{fig:timerev_GRPsolu}
  \end{figure}

\begin{table}[htbp]
  \centering
    \caption{\small Example \ref{example:TimeRev}: Numerical errors and corresponding convergence rates of the GRP solver based solution.
  }
\begin{tabular}{|c||c|c|c|c|c|c|c|}
  \hline
  {$\tau$}
 & 0.16  &0.14   & 0.12  & 0.1  & 0.08  &0.06  & 0.04  \\
 \hline
{$e_{\mbox{\tiny GRP}} (\tau)$} &
 2.74e-8 &2.09e-8&1.53e-8& 1.06e-8 &6.75e-9 &3.77e-9 & 1.66e-9               \\
order &--&  2.03 & 2.02 & 2.02&2.02&2.02&2.02                       \\
\hline
\end{tabular}\label{tab:GRPsolver2}
\end{table}

This reversing time problem is still a GRP. The GRP solver based solutions $\vec U^{\mbox{\tiny GRP}}({t_0} + \tau,r_0 )$ are plotted in dash lines in Fig. \ref{fig:timerev_GRPsolu} in comparison to
the   reference solutions given by a second-order accurate MUSCL method with the Godunov flux on a very fine uniform mesh with the spatial
step-size of $10^{-6}$.
The data in Table \ref{tab:GRPsolver2} show  that second-order rates of convergence can be obtained for the proposed GRP solver and thus  the accuracy of our GRP solver is  validated.

\end{example}

\section{Conclusions}\label{sec:conclude}

The paper developed a second-order accurate direct Eulerian generalized Riemann problem (GRP) scheme for the spherically
symmetric general relativistic hydrodynamical (RHD) equations and
a second-order accurate discretization for the  spherically
symmetric Einstein (SSE) equations.
Different from the resulting Godunov-type schemes based on exact or approximate Riemann solvers in general relativistic case,
the GRP scheme  could not be directly obtained  by a local change of coordinates from the special relativistic case.
In the GRP scheme,
the Riemann invariants and the Runkine-Hugoniot jump conditions were directly used to analytically resolve the left and right nonlinear waves of the local GRP in the Eulerian formulation together with the local change of the metrics to obtain the limiting values of  the time derivatives of the conservative variables along the cell interface and the numerical flux for the GRP scheme.
Comparing to the GRP schemes for special RHDs \cite{Yang-He-Tang_GRP-RHD1D},
the derivation of the present GRP scheme is more technique.
%
The energy-momentum tensor obtained in the GRP solver was  utilized to evaluate the fluid variables in the  SSE equations and  the continuity of the metrics was constrained at the cell interfaces.
Several numerical examples were
 presented to demonstrate the accuracy and effectiveness of the proposed GRP scheme, in comparison with the first-order accurate Godunov scheme.

\section*{Acknowledgements}

This work was partially supported by
the National Natural Science Foundation
of China (Nos.  91330205 \& 11421101).

\begin{appendices}

\section{The Godunov scheme used   in Section \ref{sec:experiments}}
\label{sec:AppendixA}

Assume that the ``initial'' data at time $t_n$ are
\begin{equation*}
\left\{ \begin{array}{l}
 \vec U_h (t_n,r ) = \vec U_j^n, \\[3mm]
 \left( \begin{array}{c}
 A_h (t_n,r ) \\
 B_h (t_n,r ) \\
 \end{array} \right) = \dfr{{r_{j + \frac{1}{2}}  - r}}{{\Delta r}}\left( \begin{array}{c}
 A_{j - \frac{1}{2}}^n  \\
 B_{j - \frac{1}{2}}^n  \\
 \end{array} \right) + \dfr{r - r_{j - \frac{1}{2}} }{\Delta r}  \left( \begin{array}{c}
 A_{j + \frac{1}{2}}^n  \\
 B_{j + \frac{1}{2}}^n  \\
 \end{array} \right),
 \end{array} \right.
\end{equation*}
for $r \in I_j$, where $A_h (t_n,r )$ and $B_h (t_n,r )$ are continuous at cell interface.

\noindent
{\tt Step I}. Evolve the solution  $\vec U$ at  $t=t_{n+1}$ of \eqref{eq:RHD_1} by
\begin{align*}
\vec U_j^{n + 1}  = \vec U_j^n  &- \dfr{{\Delta t_n }}{{\Delta r}}
\left( {\sqrt {A_{j + \frac{1}{2}}^{n } B_{j + \frac{1}{2}}^{n } } \vec F\left( \vec U_{j + \frac{1}{2}}^{\mbox{\tiny RP},n} \right)
    - \sqrt {A_{j - \frac{1}{2}}^{n } B_{j - \frac{1}{2}}^{n } } \vec F\left( \vec U_{j - \frac{1}{2}}^{\mbox{\tiny RP},n}\right)} \right) \\[2mm]
&
+ \Delta t_n
\vec S \left( {r_j ,A_{j }^{n} ,B_{j }^{n } ,\vec U_{j }^{n} } \right),
\end{align*}
where  $ A_{j }^{n} :=\frac{1}{2} \left( A_{j - \frac{1}{2}}^{n } + A_{j + \frac{1}{2}}^{n } \right),  B_{j }^{n} :=\frac{1}{2} \left( B_{j - \frac{1}{2}}^{n } + B_{j + \frac{1}{2}}^{n } \right)$, and
$\vec U_{j + \frac{1}{2}}^{\mbox{\tiny RP},n}$ is the value at $r=r_{j+\frac12}$ of the exact solution  to the
following RP
\begin{equation*}
\begin{cases}
\dfr{{\partial \vec U}}{{\partial t}} + \sqrt {A_{j + \frac{1}{2}}^n B_{j + \frac{1}{2}}^n } \dfr{{\partial \vec F(\vec U)}}{{\partial r}} = 0,    \qquad r>0,~t>t_n,\\[3mm]
 \vec  U(t_n,r)=
     \begin{cases}
     \vec U_{j}^n, & r<r_{j+\frac{1}{2}},\\
     \vec U_{j+1}^n, & r>r_{j+\frac{1}{2}},
     \end{cases}
\end{cases}
\end{equation*}

{\tt Step II}.  Calculate   $A_{j+\frac12}^{n+1}$ and $B_{j+\frac12}^{n+1}$,
following  {\tt Step V}  of the GRP scheme in Section \ref{sec:outline}.
\end{appendices}


\end{document}